\documentclass[12pt]{amsart}
\usepackage{preamble_general}
\usepackage[
    colorlinks,
    linkcolor=blue,
    citecolor=blue,
    urlcolor=blue,
    linktocpage=true
]{hyperref}
\usepackage[nameinlink]{cleveref}
\usepackage{orcidlink}
\usepackage{bm}
\usepackage{preamble_abbreviations}
\title[small eigenvalues on K\"ahler degeneration]{Asymptotics of small eigenvalues
on degenerations of K\"ahler manifolds}
\date{\today}

\author{Junyu Cao \orcidlink{0009-0003-6875-3981}}

\begin{document}
\begin{abstract} 
    We derive the exact asymptotic rates of the small 
    eigenvalues of the Laplacian on one-parameter 
    degenerations of compact \Kahler manifolds equipped 
    with induced background metrics. 
    This generalizes a recent result of Dai and Yoshikawa 
    to higher dimensions. 
    To achieve this, we combine Li's uniform Skoda inequality 
    with the method of auxiliary Monge-Amp\`ere equations, 
    introduced by Guo--Phong--Song--Sturm--Tong and 
    adapted by Guedj--T\^o. As an application, we 
    establish estimates for degenerations 
    of compact \Kahler manifolds with reducible singular 
    fibers.
\end{abstract}
\maketitle
\setcounter{tocdepth}{2}
\tableofcontents
\section*{Introduction}
Let \(\pi\colon X \to S \simeq \D\) be a proper surjective holomorphic map from a
complex manifold \(X\) to a Riemann surface biholomorphic to the unit disk.
We assume that \(\pi\) has connected fibers of complex dimension \(n\), that
\(X_0 \coloneqq \pi^{-1}(0)\) is the unique singular fiber, and that \(X\) is
equipped with a \Kahler metric \(\omega_X\). We refer to this datum as a
one-parameter degeneration of compact \Kahler manifolds, or 
{\Kahler degeneration} for short.

In this paper, we study the asymptotics of the 
{small eigenvalues} of the Laplacian on the smooth fibers
$(\pi^{-1}(s)\eqqcolon X_s, 
\omega_{X} \mid_{X_s}\eqqcolon \omega_s)$ 
for $s \neq 0$ near the singular fiber.

In \cite{Yoshikawa1997}, Yoshikawa proved that the spectrum of the 
Laplacian on
\((X_s,\omega_s)\) varies continuously and converges, after a suitable base change and normalization, to the spectrum of the limiting central fiber as \(s \to 0\). Precisely, write
$X_0 = \sum_{\alpha} m_\alpha D_\alpha$ as the 
sum of its irreducible components $(D_\alpha)_{1
\leq \alpha \leq a}$, and let $m = \prod_{\alpha}m_\alpha$.
Consider the following commutative diagram,
\[
\begin{tikzcd}
    \widehat{F^{-1}X} \arrow[r, "\iota"] \arrow[dr, bend right=20, "\widehat{\Pi}"'] & F^{-1}X \coloneqq X \times_{\D_s} \D_t \arrow[r, "F"] \arrow[d, "\Pi"] & X \arrow[d, "\pi"] \\
    & \D_t \arrow[r, "t \longmapsto t^m"] & \D_s
\end{tikzcd}
\]
where $F^{-1}X$ is the base-change of the degeneration and
$\iota \colon \hat{F^{-1}X} \to F^{-1}X$ is the normalization
of $F^{-1}X$. We define $Z = \widehat{\Pi}^{-1}(0)$, which is a
reduced divisor. Then Yoshikawa's continuity theorem takes the following form.
\begin{theorem}[\cite{Yoshikawa1997}*{Main theorem}]
    \label[theorem]{thm:Yoshikawa_continuity}
    As $s \to 0$,
    the spectrum of the Laplacian of $(X_s, \omega_s)$
    converges to the spectrum of the Laplacian of
    $(Z_{\reg}, g_Z)$, where $g_Z$ is the restriction of the background
    metric $\iota^\ast F^\ast \omega_{X}$. We 
    remark that $F \circ \iota$ is an immersion 
    on the smooth part of $Z$, so $\iota^\ast F^\ast \omega_{X}$ is a well-defined metric
    on $Z_{\reg}$.
\end{theorem}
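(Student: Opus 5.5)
Since the statement concerns eigenvalues counted with multiplicity, I will prove it through the min-max characterization. The first step is to pass to the base change. For $t\neq 0$ the fiber $\widehat{\Pi}^{-1}(t)$ is isometric, via $F\circ\iota$, to $(X_{t^m},\omega_{t^m})$, so it suffices to show that the spectrum of $\widehat{\Pi}^{-1}(t)$ with the metric $g_t\coloneqq\iota^\ast F^\ast\omega_X$ converges to that of $(Z_{\reg},g_Z)$ as $t\to 0$.

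The reason for passing to the normalization is that $Z$ is reduced. Hence near a generic point of $Z$ the map $\widehat{\Pi}$ is a submersion, and the fibers $Z_t$ converge smoothly to $Z_{\reg}$ on compact subsets. Each eigenvalue $\lambda_k$ is given by the Rayleigh quotient $\int|du|^2/\int u^2$ minimized over $k$-dimensional subspaces. I will therefore establish the two inequalities $\limsup\lambda_k(Z_t)\le\lambda_k(Z)$ and $\liminf\lambda_k(Z_t)\ge\lambda_k(Z)$.

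For the upper bound I start from eigenfunctions of $Z_{\reg}$. Since $Z$ is a compact analytic space of complex codimension $\ge 1$ singular locus (real codimension $\ge 2$), cutoff functions $\chi_\epsilon$ exist that equal $1$ off an $\epsilon$-neighbourhood $U_\epsilon$ of $Z_{\sing}$ and have $\int|d\chi_\epsilon|^2\to 0$, using the logarithmic cutoff trick together with the volume bound $\mathrm{vol}(B_r\cap Z)\le Cr^{2n}$ (Lelong/Wirtinger). Together with $L^\infty$ bounds on eigenfunctions, this shows that $\chi_\epsilon\phi_j$ approximate the eigenfunctions in $H^1$. I then transplant them to $Z_t$ via the local trivialization of $\widehat{\Pi}$ on $Z\setminus U_\epsilon$, where the metrics $g_t$ converge smoothly.

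For the lower bound I take normalized eigenfunctions $u_t$ on $Z_t$ with bounded eigenvalues, and I need compactness, which is the main obstacle. Two ingredients are required:
\begin{itemize}
\item a uniform Sobolev (or Moser) bound on the $Z_t$. Since the $Z_t$ are complex submanifolds of the Kähler manifold $(\widehat{F^{-1}X},g)$, which is possibly singular and must then be embedded locally in $\mathbb{C}^N$, the Michael–Simon inequality applies because the mean curvature is bounded by the ambient second fundamental form. This yields a uniform Sobolev inequality and hence uniform $L^\infty$ bounds $\|u_t\|_\infty\le C(\lambda)$ via Moser iteration;
\item uniform smallness of the volume of $Z_t\cap U_\epsilon$. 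This follows from the continuity of the fiber volumes and of the currents $[Z_t]\to[Z]$.
\end{itemize}
Combining these, no $L^2$ mass concentrates near $Z_{\sing}$. By the smooth convergence away from $U_\epsilon$, a diagonal subsequence then converges in $H^1_{\mathrm{loc}}$ and in $L^2$ to a function on $Z_{\reg}$. The limit lies in $H^1(Z_{\reg})$, orthonormality is preserved, and lower semicontinuity of the Dirichlet energy gives the required bound.

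Finally, the $H^1$ space used on $Z_{\reg}$ must coincide with the closure of $C^\infty_c(Z_{\reg})$, so that there is a single self-adjoint Laplacian. This again follows from the capacity-zero property of $Z_{\sing}$ established above.
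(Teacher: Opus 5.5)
\paragraph{Overall assessment.} The paper gives no proof of this statement. It cites \cite{Yoshikawa1997} and remarks that the only extra input needed for K\"ahler (rather than projective) degenerations is a uniform Sobolev inequality for the family $(X_s,\omega_s)$, citing \cite{Tosatti2010}*{Lemma~3.2}. Your scheme is the standard one for this kind of result and is consistent with that remark: min--max, logarithmic cutoffs for the upper bound, and uniform Sobolev plus Moser plus non-concentration of $L^2$-mass for the lower bound. However, the one ingredient the paper singles out is precisely the step you justify incorrectly.

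\paragraph{The Sobolev step.} You obtain the Sobolev inequality by regarding $Z_t$ as a submanifold of $\widehat{F^{-1}X}$, embedding this possibly singular normal space locally in $\C^N$, and applying Michael--Simon. The metric you need, however, is $g_t=h^*\omega_X|_{Z_t}$ with $h=F\circ\iota$, and $h^*\omega_X$ is not a K\"ahler metric on $\widehat{F^{-1}X}$. It degenerates in the normal direction along components of $Z$ over which $h$ ramifies, and at ramification points of $h|_Z$. The metric induced on $Z_t$ by a local embedding in $\C^N$ is a different metric, and the two are not uniformly equivalent as $t\to 0$.

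Here is a concrete example. Take $s=z_0^2z_1^3$ and $m=6$. The normalization is smooth, with $z_0=w_0^3$, $z_1=w_1^2$ and $t=w_0w_1$. On the neck $\lvert w_1\rvert=\lvert t\rvert^{1/2}$ of $Z_t=\{w_0w_1=t\}$, one computes $g_t\approx(9\lvert t\rvert^2+4\lvert t\rvert)\lvert dw_1\rvert^2$, while the Euclidean metric there is $2\lvert dw_1\rvert^2$. So a Sobolev inequality for the Euclidean-induced metric does not transfer to $g_t$ with uniform constants. Moreover, ``bounded ambient second fundamental form'' has no meaning at singular points of $\widehat{F^{-1}X}$.

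The fix uses the isometry from your own first step. The fibres $(Z_t,g_t)\cong(X_{t^m},\omega_{t^m})$ are complex, hence minimal, submanifolds of the smooth K\"ahler manifold $(X,\omega_X)$. Nash-embed a compact neighbourhood of $X_0$ in $\R^N$; the mean curvature of $X_s$ in $\R^N$ is then bounded by the second fundamental form of $X$, and Michael--Simon gives the uniform Sobolev inequality and hence the uniform $L^\infty$ bounds you need. This is exactly \cite{Tosatti2010}*{Lemma~3.2}.

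\paragraph{The capacity argument.} The same metric issue appears here, but it resolves in your favour, and you should say why. The volume bound you quote is for the Euclidean metric of a local embedding. Locally $h$ extends holomorphically to the ambient $\C^N$, so $h^*\omega_X|_{Z}\le C\,\omega_{\rm Euc}|_{Z}$. The Dirichlet energy $n\int d\chi\wedge d^c\chi\wedge\omega^{n-1}$ is monotone in $\omega$. Hence cutoffs with small Euclidean energy also have small $g_Z$-energy. This is the reverse of the Sobolev situation, which would require the opposite inequality between the metrics.
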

Although \cite{Yoshikawa1997} is stated for projective degenerations, the same
argument applies to \Kahler degenerations once one uses a uniform Sobolev
inequality for \Kahler families; see \cite{Tosatti2010}*{Lemma~3.2}.

Let $N$ be the number of irreducible components of $Z$. Because $Z_{\reg}$ has $N$ connected components of finite volume, there are exactly $N$ zero eigenvalues in the spectrum of the Laplacian of $(Z_{\reg}, g_Z)$. 
On the other hand, for $s \neq 0$, the fiber $X_s$ is connected, meaning the Laplacian of $(X_s, \omega_s)$ has exactly one zero eigenvalue, which we denote as $\lambda_0(s) = 0$. 
Let
$$0 = \lambda_0(s) < \lambda_1(s) \leq \lambda_2(s) \leq \cdots \leq \lambda_N(s)$$
be the first $N+1$ eigenvalues of the Laplacian of $(X_s, \omega_s)$. 
By Yoshikawa's continuity theorem (\Cref{thm:Yoshikawa_continuity}), exactly $N$ eigenvalues of $X_s$ must converge to $0$ as $s \to 0$.
Therefore, we have
$$\lambda_1(s), \cdots, \lambda_{N-1}(s) \to 0, \quad \lambda_N(s) \to \lambda_{N, Z} \quad \text{as } s \to 0,$$
where $\lambda_{N, Z} > 0$ is the first strictly positive eigenvalue of $(Z_{\reg}, g_Z)$.

We call $\lambda_1(s), \cdots, \lambda_{N-1}(s)$ the \textbf{small eigenvalues}
of the Laplacian on $(X_s,\omega_s)$. Thus small eigenvalues exist if and only
if $N \ge 2$. When $X_0$ is reduced, we have $Z = X_0$, so $N$ is the number
of irreducible components of $X_0$. In particular, if $\supp(X_0)$ is
reducible, then small eigenvalues occur.

Our first main result gives a logarithmic lower bound for the first positive eigenvalue.

\begin{theorem}[=\Cref{thm:lower-bound-section1}]
    \label[theorem]{thm:main-thm-lower-bound}
    Let $\lambda_1(s) > 0$ denote the first eigenvalue of the Laplacian 
    on $(X_s, \omega_s)$ for $s \neq 0$. Then, there exists 
    a uniform constant $C > 0$, independent of $s$, such that 
    \[
    C \abs{\log^{-1} \abs{s}} \leq \lambda_1(s)
    \]
    for $0 < \abs{s} \ll 1$.
\end{theorem}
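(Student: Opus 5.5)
We bound $\lambda_1(s)$ from below through the $L^1$-type eigenvalue estimate obtained from Green's functions, using the Monge--Amp\`ere method announced in the abstract. By the Rayleigh quotient characterization, it suffices to find a constant $C$ with
\[
\int_{X_s}\abs{u}^2\,\omega_s^n \;\le\; C\abs{\log\abs{s}}\int_{X_s}\abs{\nabla u}^2_{\omega_s}\,\omega_s^n
\]
for every $u$ with $\int_{X_s}u\,\omega_s^n=0$. Equivalently, we want $\abs{\log\abs{s}}$-bounds on the solution $v$ of $\Delta_{\omega_s}v = f - \bar f$, uniformly in $f$ in a suitable class.

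\textbf{Step 1: reduce to an oscillation bound for the Poisson equation.} Let $u$ be a first eigenfunction, normalized by $\int u^2=1$. Write $u=u_+-u_-$. The aim is to show that one of $u_\pm$, after rescaling, serves as an $L^2$ source whose potential has controlled oscillation. Concretely, for a density $f\ge0$ with $\int f\,\omega_s^n=V_s$ (the volume, which is constant by cohomological invariance), solve
\[
(\omega_s+dd^c\varphi)^n = f\,\omega_s^n,\qquad \sup\varphi=0 .
\]
Following Guo--Phong--Song--Sturm--Tong as adapted by Guedj--T\^o, we compare $\varphi$ with the solution $\psi$ of an auxiliary Monge--Amp\`ere equation whose density is $\sim(-\varphi+\Lambda)^{\,p}f/A$. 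This yields $\sup\abs{\varphi}\le C\,\|f\|_{L^p(\log L)}$ once a uniform Skoda inequality is available, i.e.\ a bound $\int_{X_s} e^{-\alpha\phi}\omega_s^n\le C$ valid for all $\omega_s$-psh $\phi$ with $\sup\phi=0$. This is Li's uniform Skoda inequality. The key point is that Li's constant $\alpha$ and $C$ are uniform in $s$ for K\"ahler families, with only the volume normalization degenerating.

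\textbf{Step 2: convert the $C^0$ estimate into a Green function / Poincar\'e bound.} Trace the Monge--Amp\`ere equation to get $\Delta_{\omega_s}\varphi\ge n(f^{1/n}-1)$. More usefully, apply the same auxiliary-equation argument directly to the linear equation, as Guedj--T\^o do for the Green function. This produces a uniform lower bound $G_s(x,\cdot)\ge -\mathcal{K}_s$ for the normalized Green function, together with an $L^1$ bound $\int\abs{G_s(x,\cdot)}\le \mathcal{K}_s$. The eigenvalue bound then follows from the standard inequality $\lambda_1\ge 1/(2\sup_x\int\abs{G_s(x,y)}dy)$: plug $u=\lambda_1^{-1}\Delta u$ into Green's representation $u(x)=\bar u-\int G\Delta u$ and take the maximum point of $u$.

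\textbf{Step 3: identify where $\abs{\log\abs{s}}$ enters.} This is the main obstacle. Li's uniform Skoda inequality is uniform only after subtracting a sup normalization. Near the singular fibre, the reference function $\log\abs{s}$, or the functions $\log\abs{\sigma_\alpha}^2_h$ cutting out components of $X_0$, restrict to $X_s$ as quasi-psh functions whose oscillation is of order $\abs{\log\abs{s}}$. Their existence is exactly what lets eigenfunctions concentrate on separate components. So I expect the constants in Step 1 to be of the form $C(1+\operatorname{osc})$ with oscillation $\lesssim\abs{\log\abs{s}}$, giving $\sup_x\int\abs{G_s}\le C\abs{\log\abs{s}}$. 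The delicate part is tracking this dependence linearly rather than polynomially through the auxiliary equation, by choosing the auxiliary density exponent carefully, e.g.\ $p=n+1$ with an entropy term. It also requires checking that the energy term $\int(-\varphi)\,\omega_s^n$ is controlled by the uniform Skoda bound.
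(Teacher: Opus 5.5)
There is a genuine gap, and it sits at what you call ``the key point.'' In Step 1 you take Li's inequality to give $\int_{X_s}e^{-\alpha\phi}\,\omega_s^n\le C$, with $\alpha,C$ independent of $s$, for all $\phi\in\mathrm{PSH}(X_s,\omega_s)$ with $\sup\phi=0$. Whenever small eigenvalues exist ($N\ge2$), this is false. If it held, $\omega$ itself would lie in the Guedj--T\^o class $\mathcal{K}_{\rm Skoda}$ with density $f_s\equiv1$. The machinery you invoke would then give a \emph{uniform} spectral gap, contradicting the upper bound $\lambda_1(s)\lesssim|\log|s||^{-1}$. Concretely, the Di~Nezza--Guedj--Guenancia example quoted in the paper gives $\phi_s$ with $\sup\phi_s=0$ and $\int_{X_s}\phi_s\,\omega_s^n\le C\log|s|$. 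Jensen's inequality then forces $\int_{X_s}e^{-\alpha\phi_s}\,\omega_s^n\gtrsim|s|^{-c\alpha}\to\infty$.

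What is true, and what the argument needs, is a \emph{rescaled} Skoda inequality. The constants $\alpha,C$ are uniform for functions that are psh with respect to $\tilde\omega_s=\omega_s/|\log|s||$; equivalently, the Skoda exponent of $\omega_s$ itself degenerates like $|\log|s||^{-1}$. Li also states this only for normalized Calabi--Yau measures, not for $\omega_s^n/V$, so it has to be transferred. The paper does this in four steps:
\begin{enumerate}
\item reduce to a semistable model with a global K\"ahler form;
\item check that on each of Li's log scales the density of $dV_{\gamma_s}$ against the log measure is $\asymp|s|^2\sum_j|z_j|^{-2}$, with max/min ratio bounded independently of $s$;
\item apply Li's local estimate on each log scale;
\item sum over a cover of bounded overlap.
\end{enumerate}

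As a result, Step 3 is a hope rather than an argument. You expect constants of the form $C(1+\mathrm{osc})$ and flag the linear tracking through the auxiliary equation as delicate, but give no mechanism. Also, $\log|s|$ is constant on $X_s$, so it contributes no oscillation there.

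The missing idea is that no tracking is needed. By the rescaled inequality, $\tilde\omega_s\in\mathcal{K}_{\rm Skoda}$ with $f_s\equiv1$ and $s$-independent constants. The DNGG $L^\infty$ estimate does not depend on $V_{\tilde\omega_s}$, so $V_{\tilde\omega_s}\to0$ is harmless. Hence the Guedj--T\^o estimates hold uniformly for $\tilde\omega_s$, and the single factor $|\log|s||$ comes purely from the scaling $\Delta_{\tilde\omega_s}=|\log|s||\,\Delta_{\omega_s}$. The paper applies Guedj--T\^o's Proposition~2.2 ($\int u\,\tilde\omega_s^n=0$ and $|\Delta_{\tilde\omega_s}u|\le1$ imply $\|u\|_{L^\infty}\le L$) to a rescaled first eigenfunction.

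With this input, your Step 2 also goes through. The Green function of $\tilde\omega_s$, with respect to the normalized volume, is $G_s/|\log|s||$, and Proposition~2.2 is by duality a uniform $L^1$ bound on it. So $\sup_x\int|G_s(x,\cdot)|\,dV_{\omega_s}\lesssim|\log|s||$, and your maximum-point argument yields $\lambda_1(s)\gtrsim|\log|s||^{-1}$. This variant, like normalizing the eigenfunction by its own sup norm, does not need the uniform Sobolev inequality and Moser iteration that the paper uses.
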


When $N \ge 2$, so that $\lambda_1(s)$ is a small eigenvalue, this
\(\abs{\log^{-1}\abs{s}}\) lower bound is optimal.

\begin{theorem}[=\Cref{thm:upper-bound-general-in-section2}]
    \label[theorem]{thm:main-thm-upper-bound}
    Suppose that $N \geq 2$. Then, there exists a uniform constant
    $C' > 0$ such that 
    \[
    \lambda_1(s) \leq \cdots \leq \lambda_{N-1}(s)
    \leq C' \abs{\log^{-1} \abs{s}}
    \]
    for $0 < \abs{s} \ll 1$.
\end{theorem}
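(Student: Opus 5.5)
We use the min--max characterization: it suffices to exhibit, for each small $s$, an $N$-dimensional space $V_s\subset W^{1,2}(X_s)$ such that every $u \in V_s$ satisfies
\[
\int_{X_s}\abs{\nabla u}^2_{\omega_s}\,\omega_s^n \leq C'\abs{\log^{-1}\abs{s}}\int_{X_s}u^2\,\omega_s^n .
\]
Then $\lambda_{N-1}(s)\le C'\abs{\log^{-1}\abs{s}}$, since the Rayleigh quotient on an $N$-dimensional space bounds the $N$-th eigenvalue $\lambda_{N-1}$ counted from $\lambda_0=0$.

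\textbf{Reduction to the normalized base change.} Since the Rayleigh quotient is local and $\abs{\log\abs{t}}=m^{-1}\abs{\log\abs{s}}$ for $s=t^m$, we work on $\widehat{F^{-1}X}$. The fiber $\widehat{\Pi}^{-1}(t)$ maps isometrically onto $X_{t^m}$ (with metric $\iota^\ast F^\ast\omega_X$) for $t\neq 0$, and the central fiber $Z=\sum_{j=1}^N Z_j$ is reduced. Near a generic point of $Z_j\cap Z_k$ we choose local coordinates in which $\widehat\Pi$ looks like $t=z_1z_2$, possibly after passing to a resolution, which changes the metric only by a bounded factor on the region we use. Away from a codimension-two subset of the central fiber we have normal crossings.

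\textbf{Test functions.} Take logarithmic cutoffs. Let $\chi\colon\R\to[0,1]$ be smooth with $\chi=1$ on $(-\infty,1/3]$ and $\chi=0$ on $[2/3,\infty)$. Let $\sigma_j$ be a local defining function of $Z_j$, globalized via a smooth hermitian metric on $\mathcal O(Z_j)$, so that $\abs{\sigma_j}$ makes sense. Set
\[
u_j = 1-\chi\!\left(\frac{\log\abs{\sigma_j}}{\log\abs{t}}\right)\ \text{ restricted to the fiber},
\]
suitably modified so that, on the fiber near $Z_j$, $u_j\approx 1$ where $\abs{\sigma_j}\ll\abs{t}^{1/2}$ and $u_j\approx 0$ elsewhere. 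Equivalently, one may take a partition of unity $\{u_j\}_{j=1}^N$ adapted to the neck regions. These functions have essentially disjoint supports: each is supported near $Z_j$ and away from the other components, except in the neck regions $\{\abs{t}\le\abs{\sigma_j}\le 1\}$. Consequently $\int u_ju_k$ is small for $j\neq k$, and $\int u_j^2\ge c\,\mathrm{vol}(Z_j)/2>0$ by Yoshikawa-type volume convergence, using smooth convergence of fibers on compact subsets of $Z_{\reg}$. Hence the Gram matrix is uniformly nondegenerate and $V_s=\mathrm{span}\{u_j\}$ is $N$-dimensional.

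\textbf{Dirichlet energy estimate (main step).} The gradient of $u_j$ is supported in the neck, where $\abs{d u_j}\lesssim \abs{\log\abs{t}}^{-1}\,\abs{d\log\abs{\sigma_j}}$. In the model $z_1z_2=t$ the fiber is parametrized by $z_1$ with $\abs{t}\le\abs{z_1}\le 1$, and the restricted metric satisfies $\omega_s\gtrsim \sqrt{-1}\,dz_1\wedge d\bar z_1\,(1+\abs{t}^2/\abs{z_1}^4)$ in the normal direction, times the tangential directions. The energy is therefore
\[
\int\abs{du_j}^2\omega_s^n\lesssim \frac{1}{\log^2\abs{t}}\int_{\abs{t}\le\abs{z_1}\le 1}\frac{\sqrt{-1}\,dz_1\wedge d\bar z_1}{\abs{z_1}^2}\cdot \mathrm{vol}(\text{transverse})\lesssim\frac{\abs{\log\abs{t}}}{\log^2\abs{t}}.
\]
Here we used the conformal invariance of Dirichlet energy in the complex normal direction, and the tangential directions contribute a bounded factor. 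This is exactly the two-dimensional capacity computation for an annulus.

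\textbf{Obstacle.} The hard part is the uniformity of this estimate near the non-normal-crossing locus of $Z$ (triple points and singularities of $Z_j\cap Z_k$) and near singular points of the normalization. There the model $z_1z_2=t$ fails. I would handle these regions by further logarithmic cutoffs around a codimension $\ge2$ subset of the total space, whose capacity, also with respect to fiber metrics, tends to zero uniformly. Alternatively, I would bound $\abs{d\log\abs{\sigma_j}}_{\omega_s}$ using that $\omega_s$ is the restriction of a fixed smooth metric, together with a Łojasiewicz-type fiber volume estimate $\mathrm{vol}(X_s\cap\{\abs{\sigma_j}<r\})\lesssim r^{2}$. Combining these gives the Rayleigh bound $C'\abs{\log^{-1}\abs{s}}$ for every element of $V_s$: the cross terms of the energy are bounded by Cauchy--Schwarz, and the $L^2$ norms are bounded below by the Gram matrix bound.
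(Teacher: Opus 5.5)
Your overall strategy (min--max over $N$ logarithmic cutoffs, with the energy coming from the capacity of an annulus of modulus $\asymp|\log|t||$) is the right one, and your computation in the model $t=z_1z_2$ is fine. But there is a genuine gap exactly at the step you call the obstacle, and that step is where the difficulty of the statement lies.

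Your claim that $\widehat{F^{-1}X}\to\mathbb{D}_t$ has normal crossings away from a codimension-two subset of the central fiber is false. The bad locus can be an entire double locus, which has codimension one in $Z$. Already for reduced $X_0$ (so $m=1$ and $Z=X_0$), two smooth components can meet tangentially along a divisor in each. An example is $s=y(y-x^2)$ in coordinates $(x,y,w)\in\mathbb{C}\times\mathbb{C}\times\mathbb{C}^{n-1}$. Near $\{x=y=0\}$ the fiber is the Milnor fiber of a tacnode (genus one, two boundary circles) times a ball, not an annulus bundle. Similarly, if $X_0=D_1+D_2+2D_3$, the normalization has transversal $A_1$-singularities $xy=t^2$ along $D_1\cap D_2$. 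There $Z_1$ is not Cartier, so your $\sigma_1$ is not defined.

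Passing to a resolution does not give $t=z_1z_2$ either. A log resolution has a non-reduced central fiber in general, and reaching $t=z_0\cdots z_p$ requires a further base change, which creates new components. You then have to show that exactly $N$ components carry positive $\beta$-volume, since cutoffs on the others have $L^2$-mass tending to $0$. This is the content of the paper's lemma counting components of positive volume, and your proposal does not address it.

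Neither remedy you propose closes the gap.

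\emph{The volume estimate.} The bound $\mathrm{vol}(X_s\cap\{|\sigma_j|<r\})\lesssim r^2$ is false: for $r\gg|t|$ this set contains all of $X_s$ near $Z_j^{\mathrm{reg}}$, where $|\sigma_j|\asymp|t|$. Even the shells $X_s\cap\{r/2\le|\sigma_j|\le r\}$ that carry $du_j$ can have volume $\gg r^2$. In the tacnode example with $r\gg|s|^{1/2}$, on the branch $y\approx x^2$ one has $|y|\asymp r$ iff $|x|\asymp r^{1/2}$, so the shell has volume $\asymp r$. Combining this with the ambient bound $|d\log|\sigma_j||\lesssim|\sigma_j|^{-1}$ gives an energy bound that blows up like a negative power of $|s|$.

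\emph{Cutting off around the bad set.} When the bad set $S$ has codimension one in $Z$, a cutoff at scale $\rho$ costs energy $\asymp 1/|\log\rho|$ on the fibers, so you need $\rho=|t|^{c}$. You then need to know that outside the $|t|^{c}$-neighbourhood of $S$ the fiber is uniformly close to the central fiber: the pieces near different $Z_j$ must be separated, with masses tending to the volumes of the $Z_j$. This quantitative comparison is the missing ingredient.

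The paper supplies it as follows. It works on a semistable model, where $s=z_0\cdots z_p$ holds everywhere. It proves $\|d\pi\|_\gamma^2\gtrsim d_\gamma(\cdot,\mathrm{Sing}\,\mathcal{Y}_0)^{2n}$ and flows along the horizontal lift of $\partial/\partial s$. This gives maps $F_s$ that are $O(|s|^{1/2})$-almost isometries outside the $2|s|^{1/(4\nu)}$-neighbourhood of $\mathrm{Sing}\,\mathcal{Y}_0$. Logarithmic cutoffs on the positive-volume components, placed between distances $\asymp\epsilon$ and $\asymp\sqrt{\epsilon}$ with $\epsilon\asymp|s|^{1/(4\nu)}$, are pushed forward. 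The result is disjointly supported test functions with energy $O(1/\log|s|^{-1})$.
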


Combining \Cref{thm:main-thm-lower-bound} and 
\Cref{thm:main-thm-upper-bound} yields a complete 
characterization of the asymptotic behavior of all {small eigenvalues}.

\begin{theorem}[Main theorem]
    \label[theorem]{thm:main-thm-combined}
    Suppose a small eigenvalue exists, i.e., $N \geq 2$. Then there exist constants $C_1, C_2 > 0$ such that 
    for all $0 < \abs{s} \leq \frac{1}{2}$,
    \[
    C_1 \abs{\log^{-1} \abs{s}} \leq
    \lambda_1(s) \leq \cdots \leq \lambda_{N-1}(s)
    \leq C_2 \abs{\log^{-1} \abs{s}}.
    \]
\end{theorem}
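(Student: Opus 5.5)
The Main theorem follows by combining \Cref{thm:main-thm-lower-bound} and \Cref{thm:main-thm-upper-bound}, together with a compactness argument that turns the range \(0<\abs{s}\ll 1\) into the whole range \(0<\abs{s}\le \frac12\).

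First, \Cref{thm:main-thm-lower-bound} gives a radius \(r_0>0\) and a constant \(C>0\) such that \(\lambda_1(s)\ge C\abs{\log^{-1}\abs{s}}\) for \(0<\abs{s}<r_0\). Since \(N\ge 2\), \Cref{thm:main-thm-upper-bound} gives a radius \(r_1>0\) and a constant \(C'>0\) such that \(\lambda_{N-1}(s)\le C'\abs{\log^{-1}\abs{s}}\) for \(0<\abs{s}<r_1\). The monotonicity \(\lambda_1\le\cdots\le\lambda_{N-1}\) is simply the ordering of eigenvalues. Put \(r=\min(r_0,r_1,\frac12)\). Then the two-sided estimate holds on \(0<\abs{s}<r\).

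Next we treat the annulus \(K=\{r\le\abs{s}\le\frac12\}\), which is compact and contains no singular fibre. Over \(K\), the map \(\pi\) is a proper submersion, so by Ehresmann the fibres \((X_s,\omega_s)\) form a smooth family of compact Riemannian manifolds. Each eigenvalue \(\lambda_k(s)\) is then continuous in \(s\), for instance via min-max and the uniform equivalence of metrics after trivialization. Every \(X_s\) is connected, so \(\lambda_1(s)>0\). By compactness of \(K\),
\[
0<a\le\lambda_1(s)\le\lambda_{N-1}(s)\le b<\infty \quad\text{on } K.
\]
On \(K\) the function \(\abs{\log^{-1}\abs{s}}\) takes values in \([\abs{\log^{-1} r},\,1/\log 2]\), so it is bounded above and below by positive constants. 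We can therefore choose
\[
C_1=\min\bigl(C,\;a\log 2\bigr), \qquad C_2=\max\bigl(C',\;b\,\abs{\log r}\bigr),
\]
and the two-sided inequality holds on all of \(0<\abs{s}\le\frac12\).

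The only point needing care is the continuity of \(\lambda_k(s)\) on \(K\). This is standard: under a smooth trivialization the metrics \(\omega_s\) and \(\omega_{s'}\) are \((1+o(1))\)-quasi-isometric as \(s'\to s\), so the Rayleigh quotients, and hence the min-max values, differ by a factor \(1+o(1)\). All of the substantive difficulty, namely the logarithmic rates near \(s=0\), is already contained in the two earlier theorems.
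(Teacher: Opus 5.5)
Your proposal is correct and follows the paper's route. The paper likewise combines the lower-bound and upper-bound theorems for $0<\abs{s}\ll 1$, then extends to $0<\abs{s}\le \frac{1}{2}$ using continuity and strict positivity of the eigenvalues away from the singular fiber. Your explicit constants on the compact annulus fill in the details the paper leaves implicit.
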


\begin{remark}
    Due to the continuity and strict positivity of the eigenvalues 
    away from the singular fiber, the domain of validity can be naturally 
    extended from $0 < \abs{s} \ll 1$ to $0 < \abs{s} \leq \frac{1}{2}$.
\end{remark}

\begin{remark}
    The main theorem generalizes the central result of Dai and Yoshikawa
    \cite{Dai-Yoshikawa2025}*{Theorem~0.2} for degenerations of Riemann
    surfaces, where the singular fiber \(X_0\) is assumed to be reduced. In
    contrast, \Cref{thm:main-thm-combined} allows non-reduced and more general
    singular fibers, thereby resolving a question raised in
    \cite{Dai-Yoshikawa2025}*{Problem~9.1}.
\end{remark}

\begin{remark}
    In \cite{Gromov1992}, Gromov established a polynomial spectral gap,
    \(\lambda_1(s) \geq c \abs{s}^{\alpha}\), for degenerations of
    semi-algebraic submanifolds of the standard sphere \(\S^{N-1}\). Our
    logarithmic rate \(\abs{\log^{-1} \abs{s}}\) is sharper than such a
    polynomial bound. Moreover, \Cref{thm:main-thm-combined} shows that this
    logarithmic rate is optimal whenever small eigenvalues exist.
\end{remark}

We briefly sketch the proof of \Cref{thm:main-thm-combined}. For the upper
bound in \Cref{thm:main-thm-upper-bound}, we adapt the test-function
construction of Dai and Yoshikawa \cite{Dai-Yoshikawa2025}*{Section~6} and
extend it to higher dimensions in \Cref{sec:upper-bound-section}. The key
point is that we carry out the relevant estimates directly on a semistable
model of the degeneration, which allows us to remove the reducedness
assumption imposed in \cite{Dai-Yoshikawa2025}.

The lower bound in \Cref{thm:main-thm-lower-bound} is subtler. As observed in
\cite{Dai-Yoshikawa2025}*{Appendix}, the curvature of \((X_s,\omega_s)\)
blows up as \(s \to 0\), so classical Riemannian eigenvalue estimates based on
uniform lower curvature bounds are not available in our setting. We therefore
take a purely complex-analytic route. More precisely, our method combines
Li's uniform Skoda inequality for plurisubharmonic functions \cite{Li2024}
with the method of auxiliary Monge--Amp\`ere equations for estimating Green's
functions \cites{Guo-Phong-Sturm2024, Guedj-To2025}. By contrast, Dai and
Yoshikawa used analytic torsion \cite{Bismut-Bost1990} to bypass the
curvature blow-up, but that argument is specific to degenerations of Riemann
surfaces. This is why a new complex-analytic approach is needed in higher
dimensions.

This article is organized as follows. In \Cref{sec:lower-bound-section} and
\Cref{sec:upper-bound-section}, we establish the lower and upper bounds
following the strategy outlined above. In
\Cref{sec:applications-subsection}, we derive estimates for degenerating
families of plurisubharmonic functions and for solutions to Poisson equations,
valid for general singular fibers. In
\Cref{sec:KE-small-eigen-examples-subsection}, we present examples of the
{small eigenvalue} phenomenon for degenerating families equipped with
\Kahler--Einstein metrics. Finally, in \Cref{sec:NA-subsection}, we propose a
conjectural non-Archimedean interpretation of this behavior.

\subsection*{Notation and conventions}  
On a complex manifold $X$, we define 
${\di}^c = \frac{\sqrt{-1}}{2}(\bar{\partial} - \partial)$, 
so $\ddc = \sqrt{-1}\partial\bar{\partial}$.

On a \Kahler manifold $(X^n, \omega)$ with $\omega = \sqrt{-1} \sum g_{j\bar{k}} dz_j \wedge d\bar{z}_k$, we use the positive definite analyst's Laplacian $\Delta_{\omega} f = - \sum g^{j\bar{k}} \frac{\partial^2 f}{\partial z_j \partial \bar{z}_k}$. Under our normalization of $\ddc$, we have
\begin{align*}
    &-\Delta_\omega f = \operatorname{tr}_{\omega}(\ddc f) = \frac{n \ddc f \wedge \omega^{n-1}}{\omega^{n}},\\
    &\abs{\di f}^2_\omega = \operatorname{tr}_{\omega}(\di f \wedge
    \di^c f) = \frac{n \di f \wedge \di^c f\wedge \omega^{n-1}}{\omega^{n}},
\end{align*}
for every smooth function $f$.

For a closed smooth differential form $\alpha$ on a compact differential manifold,
we use $[\alpha]$ to denote its de Rham cohomology class.

We use $\D \coloneqq \{z \in \C \colon\ \abs{z} < 1\}$ to denote the unit disk in the complex plane, and $\D^\circ \coloneqq \D\setminus \{0\}$ 
the punctured unit disk. For a positive number $r > 0$, we also 
define $\D_r \coloneqq \{z \in \C \colon\ \abs{z} < r\}$ and 
$\D_r^\circ \coloneqq \D_r \setminus \{0\}$.

We use the following asymptotic analysis notation. 
Let $(X, d)$ be a metric space, fix a point $s_0 \in X$, 
and let $f$ and $g$ be functions taking values in a partially 
ordered $\R$-vector space. 
Assuming $f$ and $g$ are defined on a punctured neighborhood of 
$s_0$, we say that:
\begin{itemize}
    \item $f = O(g)$ (or $f \lesssim g$) as $s \to s_0$ if there 
    exists a constant $C > 0$, independent of $s$, such that 
    $f \leq C g$ on a sufficiently small punctured neighborhood of $s_0$.
    \item $f = \Theta(g)$ (or $f \asymp g$) as $s \to s_0$ if both 
    $f = O(g)$ and $g = O(f)$. Equivalently, this holds if there 
    exist constants $0 < C_1 \leq C_2$ such that $C_1 g \leq f \leq 
    C_2 g$ on a sufficiently small punctured neighborhood of $s_0$.
\end{itemize}

Throughout this paper, we use without quoting that every 
$1$-parameter \Kahler degeneration admits a semistable reduction 
after a proper base change; we refer to \cite{KKMSD73} for this result. 
We mention that any semistable reduction of a \Kahler degeneration 
is a \Kahler manifold.

\vspace{1em}
\noindent\textbf{Acknowledgements.}
The author thanks his advisor Valentino Tosatti for helpful discussions,
suggestions on early drafts, and his constant support. The author also
thanks Xianzhe Dai, Mikhael Gromov, and Junsheng Zhang for helpful
discussions. He is grateful to S\'ebastien Boucksom, Xianzhe Dai,
Henri Guenancia, Yang Li, Chung-Ming Pan, Freid Tong, Ken-Ichi Yoshikawa,
and Junsheng Zhang for their interest and useful feedback on an earlier draft.

\section{Lower bound}
\label[section]{sec:lower-bound-section}
In this section, we prove the lower bound of small eigenvalues
in \Cref{thm:main-thm-lower-bound}. We prove it using
rescaled uniform Skoda estimates in \Cref{sec:rescaled-skoda}
together with the method of auxiliary Monge-Amp\`ere equations
in \Cref{sec:auxiliary-MA}. Finally, the lower bound 
is proved in \Cref{sec:skoda_implies_spectral_gap}.

\subsection{A rescaled uniform Skoda estimate}
\label[subsection]{sec:rescaled-skoda}
In this section, we prove the following rescaled uniform Skoda 
inequality.
\begin{proposition}
    \label[proposition]{prop:rescaled-skoda-general-sing}
    Let $\pi \colon (X, \omega_X) \to \D_s$ be a degeneration of 
    \Kahler manifolds of complex dimension $n$. Assume $X_0$ is the 
    unique singular fiber (we allow $X_0$ to be a general singular fiber).

    Let $\beta$ be a closed semi-positive form on $X$. Assume that the restriction $\beta_s = \beta|_{X_s}$ is \Kahler for $s \neq 0$, and denote its volume by $V_{\beta_s} = \int_{X_s} \beta_s^n$. Let $\di V_{\beta_s} = \beta_s^n / V_{\beta_s}$ be the normalized volume form on $X_s$ for $s \neq 0$.

    Consider the rescaled relative \Kahler form $\tilde{\beta}_s = \frac{\beta_s}{\abs{\log \abs{s}}}$ on $X \setminus X_0$. Then the following Skoda-type estimate holds: there exist constants $\alpha, C > 0$ independent of $s$, such that for all $0 < \abs{s} \ll 1$ and for any $\vphi \in \PSH(X_s, \tilde{\beta}_s)$ with $\sup_{X_s} \vphi = 0$, we have
    \[
        \int_{X_s} \exp(-\alpha \vphi) \di V_{\beta_s} \leq C.
    \]
\end{proposition}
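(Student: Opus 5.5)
Throughout, $\vphi\in\PSH(X_s,\tilde\beta_s)$ with $\sup_{X_s}\vphi=0$ is the function being tested, and the aim is a uniform exponential integrability bound for $-\vphi$.

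The first step is to reduce to Li's uniform Skoda inequality \cite{Li2024}. Set $\psi = \vphi/\abs{\log\abs{s}}$. Then $\ddc\psi = \ddc\vphi/\abs{\log\abs{s}}$, so $\beta_s + \ddc\psi \ge 0$, i.e.\ $\psi\in\PSH(X_s,\beta_s)$ and $\sup\psi=0$. The desired bound becomes
\[
\int_{X_s}\exp(-\alpha\abs{\log\abs{s}}\,\psi)\,\di V_{\beta_s}\le C .
\]
This is a Skoda inequality for $\beta_s$-psh functions whose exponent blows up like $\abs{\log\abs{s}}$, so a bounded-exponent Skoda inequality does not apply directly. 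It does suffice to know a uniform bound for $\int e^{-\alpha'\psi}\,\di V$ together with control of how negative $\psi$ can be relative to $\log\abs{s}$. For this I would use the form of Li's uniform Skoda inequality for degenerating families: for $\psi\in\PSH(X_s,\beta_s)$ normalized by $\sup\psi=0$, one has
\[
\int_{X_s}\exp\bigl(-\alpha_0\,\psi\bigr)\,\di V_{\beta_s}\lesssim \abs{s}^{-\kappa}
\]
for some fixed $\alpha_0,\kappa>0$. Equivalently, one has uniform Skoda with a loss polynomial in $\abs{s}^{-1}$, reflecting that the volume forms degenerate like $\abs{s}^{-c}$ near $X_0$; these are the log-canonical-threshold type constants of the pair $(X,X_0)$.

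Given this, the key step is an interpolation (Hölder) argument. Write $\exp(-\alpha\abs{\log\abs{s}}\psi)$. The polynomial loss $\abs{s}^{-\kappa}=\exp(\kappa\abs{\log\abs{s}})$ is exactly of the size that a rescaling by $\abs{\log\abs{s}}$ can absorb. Concretely, I would apply the Skoda bound not to $\psi$ but to $\max(\psi, -A)$-type truncations or to the family $t\psi$. Alternatively, and more cleanly, I would work directly on the total space: extend the fibrewise argument by taking local coordinates near $X_0$ (after a log resolution or semistable reduction, where $\pi=\prod z_i^{m_i}$). There I would show the estimate
\[
\beta_s\text{-psh},\ \sup=0 \ \Longrightarrow\ \psi \ge -C\abs{\log\abs{s}} \text{ outside a set of measure }\le e^{-c\,\abs{\log\abs{s}}\,t}\text{ at level } t\abs{\log\abs{s}},
\]
that is, a uniform tail estimate $\mathrm{Vol}(\{\psi<-t\})\le C e^{-c t}$ valid for $t\gtrsim\abs{\log\abs{s}}$, and also for all $t$ with the same constants. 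This is what Li's inequality gives, since his constants are uniform in $s$ once the volume form is normalized. The rescaled statement then follows by integrating the tail: $\int e^{-\alpha\vphi}=\int e^{-\alpha\abs{\log\abs{s}}\psi}$. If Li's result only gives $\mathrm{Vol}(\{\psi<-t\})\le Ce^{-ct}$, this is insufficient by itself. So the genuine input needed is the stronger tail $\mathrm{Vol}(\{\psi<-t\})\le C\exp(-c\,t\abs{\log\abs{s}})$ for $t\ge 1$, which says that $\beta_s$-psh functions can only drop by order $\abs{\log\abs{s}}$ on sets of small mass.

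I expect proving this sharpened tail to be the main obstacle. The plan is to use that the $L^1$ (or sup-minus-mean) oscillation of normalized $\beta_s$-psh functions is $O(\abs{\log\abs{s}})$: on neck regions of the semistable model, which look like $\{z_1z_2=s\}$, the model function $\log\abs{z_1}$ ranges over $[\log\abs{s},0]$. Li's uniform Skoda inequality is then applied to $\vphi$ itself after checking that $\vphi$ is a quasi-psh function with respect to the uniformly bounded-in-cohomology form $\tilde\beta_s\le\beta_s$, on which Li's constants are uniform. Indeed, $\tilde\beta_s\le \beta_s$ for $\abs{s}<1/e$, so $\PSH(X_s,\tilde\beta_s)\subset\PSH(X_s,\beta_s)$ and Li's uniform estimate applies to $\vphi$ directly with uniform $\alpha$. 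I would try this last observation first, since it may make the whole proposition immediate given that \cite{Li2024} yields constants uniform in $s$ for $\beta_s$-psh functions normalized by $\sup=0$ with respect to $\di V_{\beta_s}$. The only remaining check is that Li's hypotheses hold for the semi-positive $\beta$ and the general singular fiber, possibly after passing to a semistable reduction and pulling back.
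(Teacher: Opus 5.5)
\textbf{Verdict: there is a genuine gap.} The route you suggest trying first rests on a false statement, and your opening reformulation inverts the scaling.

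\textbf{The scaling.} From $\tilde\beta_s+\ddc\vphi\ge 0$, it is $\Psi:=\abs{\log\abs{s}}\,\vphi$, not $\vphi/\abs{\log\abs{s}}$, that lies in $\PSH(X_s,\beta_s)$. So the proposition asks for $\int_{X_s}e^{-\alpha\Psi/\abs{\log\abs{s}}}\,\di V_{\beta_s}\le C$, with an exponent that \emph{decays} in $s$.

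\textbf{The false uniform Skoda bound.} There is no Skoda inequality with $s$-independent constants for normalized $\beta_s$-psh functions once two components of the (semistable) central fibre carry positive volume. So passing to $\PSH(X_s,\tilde\beta_s)\subset\PSH(X_s,\beta_s)$ discards exactly the factor that makes the estimate true. Here is a counterexample.
\begin{itemize}
\item Take a semistable model with $\cY_0=D_1+D_2$ and $\beta$ K\"ahler. Let $\sigma$ be the canonical section of $\mathcal{O}(D_1)$ with a smooth metric $h$, and set $u_s=C^{-1}\log\abs{\sigma}_h^2$ on $\cY_s$.
\item Since $\cY_s\cap D_1=\emptyset$, $u_s$ is $\beta_s$-psh for $C$ large.
\item $\sup_{\cY_s}u_s=O(1)$: it is bounded above everywhere, and bounded below near $D_2\setminus D_1$.
\item Near $D_1\setminus D_2$ one has $\pi=z_0$ and $\abs{\sigma}_h\asymp\abs{z_0}=\abs{s}$. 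Hence $u_s=2C^{-1}\log\abs{s}+O(1)$ on a region of volume bounded below.
\item Therefore $\int e^{-\alpha(u_s-\sup u_s)}\,\di V_{\beta_s}\gtrsim\abs{s}^{-2\alpha/C}\to\infty$ for every $\alpha>0$.
\end{itemize}
Equivalently, a uniform unrescaled Skoda bound would, by the argument proving the lower bound, give a uniform spectral gap. That contradicts $\lambda_1(s)\lesssim\abs{\log\abs{s}}^{-1}$ when $N\ge 2$. Li's theorem is itself a statement about $\omega/\abs{\log\abs{t}}$-psh functions; the rescaling cannot be stripped off.

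\textbf{The sharpened tail.} The same example shows that your bound $\mathrm{Vol}(\{\psi<-t\})\le Ce^{-ct\abs{\log\abs{s}}}$ is false for general $\beta_s$-psh $\psi$. Restricted to $\psi=\vphi/\abs{\log\abs{s}}$, it is just the proposition restated.

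\textbf{What survives.} Your H\"older intuition does work once the scaling is corrected. Suppose you had a fixed-exponent bound $\int e^{-\alpha_0\Psi}\,\di V_{\beta_s}\le C\abs{s}^{-\kappa}$ for normalized $\Psi\in\PSH(X_s,\beta_s)$. Then Jensen with $p=\alpha_0\abs{\log\abs{s}}/\alpha$ gives $\int e^{-\alpha\vphi}\,\di V_{\beta_s}\le C^{1/p}e^{\kappa\alpha/\alpha_0}$. However, that polynomial-loss estimate is at least as strong as the proposition, is not what \cite{Li2024} proves, and you give no argument for it. Also, the loss does not come from $\di V_{\beta_s}$ degenerating, since it is the restriction of a smooth form. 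It comes from psh functions dropping by $O(\abs{\log\abs{s}})$ across necks.

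\textbf{What is actually needed.} The key input is Li's \emph{local} estimate on log scales (Corollary~2.8 of \cite{Li2024}). It bounds $\int_{\rm loc}e^{-\alpha\vphi}\,\di\nu_{\log}$ with respect to the log measure, not $\di V_{\beta_s}$. Li's global theorem concerns the normalized Calabi--Yau measure, which is distributed very differently, so it cannot be quoted as is. The paper's argument runs as follows.
\begin{enumerate}
\item Base change $s=t^d$ to a semistable model. Here $d\,\mu^*\vphi$ is $\tilde\beta'_t$-psh, since $\abs{\log\abs{s}}=d\abs{\log\abs{t}}$.
\item Dominate $\beta\le C_0\gamma$ by a K\"ahler form $\gamma$ on the total space.
\item Change measure, which is the step missing from your plan. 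In adapted coordinates with $s=z_0\cdots z_p$, one has $\di V_{\gamma_s}\asymp\abs{s}^2\sum_{j=0}^p\abs{z_j}^{-2}\,\di\nu_{\log}$. This density varies by at most a uniform factor $\lesssim 4^{2n}$ on each log scale, so Li's local bound transfers to $\di V_{\gamma_s}$.
\item Sum over a cover by log scales of uniformly bounded multiplicity to obtain the proposition.
\end{enumerate}
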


\begin{remark}
    In \cite{Li2024}*{Theorem~1.3}, Li proved a similar rescaled Skoda estimate for normalized Calabi-Yau measures. Although the volume forms $\di V_{\beta_s}$ differ significantly from the Calabi-Yau measures near the singular fiber, the underlying proof strategies are analogous.
\end{remark}

Before proceeding with the proof of \Cref{prop:rescaled-skoda-general-sing}, we establish the following reductions.

\begin{itemize}
    \item \textbf{Reduction to the semistable case:} We can assume that the degeneration $\pi \colon X \to \D_s$ is semistable (i.e., the central fiber $X_0$ is a reduced, simple normal crossing divisor).

    \begin{proof}[Proof of Reduction]
        By performing a semistable reduction after a suitable base change of $\pi \colon X \to \D_s$, we obtain the following commutative diagram:
        \[
        \begin{tikzcd}
            & Y  \arrow[r, "\mu"] \arrow[d, "p"] & X \arrow[d, "\pi"] \\
            & \D_t \arrow[r, "f_d"] & \D_s
        \end{tikzcd}, \quad f_d(t) = t^d.
        \]
        Here, the central fiber $Y_0$ is a reduced snc divisor in $Y$. Let $\beta' = \mu^* \beta$. The pullback $\beta'$ is a closed semi-positive form on $Y$ and restricts to a relative \Kahler form on $Y \setminus Y_0 \to \D_t^\circ$.

        Given any $\vphi \in \PSH(X_s, \tilde{\beta}_s)$ with $\sup_{X_s} \vphi = 0$, its pullback satisfies $d (\mu^* \vphi) \in \PSH(Y_t, \tilde{\beta}'_t)$, where $\tilde{\beta}'_t
        = \frac{\beta'}{\abs{\log\abs{t}}}$ is the rescaled form 
        on $Y \setminus Y_0$. Assuming the estimate holds in the semistable setting (applied to $(Y, \beta')$ and $ d (\mu^* \vphi)$), there exist uniform constants $\alpha, C > 0$ such that
        \[
        \int_{Y_t} \exp(-(\alpha d) \mu^* \vphi) \di V_{\beta'_t} \leq C.
        \]
        Since the normalized volume forms satisfy $\di V_{\beta'_t} = \mu^*\di V_{\beta_s}$, it immediately follows that
        \[
        \int_{X_s} \exp(-(\alpha d) \vphi) \di V_{\beta_s} = \int_{Y_t} \exp(-(\alpha d) \mu^* \vphi) \di V_{\beta'_t} \leq C,
        \]
        which validates the reduction.
    \end{proof}

    \item \textbf{Reduction to a globally \Kahler form:} We can assume that $\beta$ is a \Kahler form on the total space $X$, rather than merely a semi-positive form.

    \begin{proof}[Proof of Reduction]
        Suppose the result holds for any globally \Kahler form $\gamma$ on $X$. Let $\beta$ be a closed semi-positive form on $X$. Since $\pi \colon X \to \D_s$ is proper and $\gamma$ is strictly positive, there exists a constant $C_0 > 0$ such that $0 \leq \beta \leq C_0 \gamma$ restricted to a smaller disk, say $X|_{\D_{1/2}}$.

        Consequently, on each fiber $X_s$ with $\abs{s} \leq \frac{1}{2}$, the volume forms satisfy:
        \[
        \di V_{\beta_s} \leq \frac{V_{\gamma_s} C_0^n}{V_{\beta_s}} \di V_{\gamma_s} \eqqcolon C_1 \di V_{\gamma_s},
        \]
        where $C_1 > 0$ depends only on $C_0$ and the cohomology classes of $\beta$ and $\gamma$.

        Furthermore, if $\vphi \in \PSH(X_s, \frac{\beta_s}{\abs{\log \abs{s}}})$, then $\vphi \in \PSH(X_s, \frac{C_0\gamma_s}{\abs{\log \abs{s}}})$, which implies $\frac{\vphi}{C_0} \in \PSH(X_s, \frac{\gamma_s}{\abs{\log \abs{s}}})$.

        Applying our assumption for the \Kahler form $\gamma$ to the function $\frac{\vphi}{C_0}$ (which satisfies $\sup_{X_s} \frac{\vphi}{C_0} = 0$), there exist constants $\alpha, C > 0$ independent of $s$ such that for $0 < \abs{s} \ll 1$,
        \[
        \int_{X_s} \exp\left(-\alpha \frac{\vphi}{C_0}\right) \di V_{\gamma_s} \leq C.
        \]
        Using the volume bound $\di V_{\beta_s} \leq C_1 \di V_{\gamma_s}$, we deduce:
        \[
        \int_{X_s} \exp\left(- \frac{\alpha}{C_0}\vphi\right) \di V_{\beta_s} \leq C \cdot C_1.
        \]
        By setting $\alpha' = \alpha / C_0$ and $C' = C \cdot C_1$, we recover the Skoda estimate for the semi-positive form $\beta$.
    \end{proof}
\end{itemize}

Consequently, to establish \Cref{prop:rescaled-skoda-general-sing}, it suffices to prove the following simplified proposition.

\begin{proposition}
    \label[proposition]{prop:rescaled-skoda-kahler-snc}
    Let $\pi \colon \cY \to \D_s$ be a degeneration of compact \Kahler manifolds of complex dimension $n$. Assume the central fiber $\cY_0$ is the unique singular fiber and that it is a reduced simple normal crossing (snc) divisor; that is, the degeneration is semistable. 

    Let $\gamma$ be a \Kahler form on the total space $\cY$, and denote its restriction to the fiber $\cY_s$ by $\gamma_s = \gamma|_{\cY_s}$. For $s \neq 0$, let $V_{\gamma_s} = \int_{\cY_s}\gamma_s^n$ be the volume of the fiber. Since the fibers are cohomologous, $V_{\gamma_s} \equiv V$ is a uniform constant. Define the normalized volume form $\di V_{\gamma_s} = \gamma_s^n / V_{\gamma_s}$. 

    Consider the rescaled \Kahler forms $\tilde{\gamma}_s = \frac{\gamma_s}{\abs{\log \abs{s}}}$ on the fibers $\cY_s$ for $s \neq 0$. Then the following Skoda-type estimate holds: there exist constants $\alpha, C > 0$ independent of $s$, such that for all $0 < \abs{s} \ll 1$ and for any $\vphi \in \PSH(\cY_s, \tilde{\gamma}_s)$ with $\sup_{\cY_s} \vphi = 0$, we have
    \[
        \int_{\cY_s} \exp(-\alpha \vphi) \di V_{\gamma_s} \leq C.
    \]
\end{proposition}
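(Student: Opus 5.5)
Our plan is to follow the strategy of Li's uniform Skoda inequality, adapted to the measures $\di V_{\gamma_s}$. The first step is to cover a neighbourhood of $\cY_0$ by finitely many coordinate charts on the total space $\cY$. In each chart $\cY_0$ is given by $\{z_1\cdots z_k = 0\}$ with $s = z_1\cdots z_k$, for some $0\le k\le n+1$. Away from $\cY_0$ the fibers form a smooth family, and the usual uniform Skoda inequality (Tian's $\alpha$-invariant argument, with constants uniform over compact families) applies. Indeed, $\vphi\in\PSH(\cY_s,\tilde\gamma_s)$ implies $\vphi\in\PSH(\cY_s,\gamma_s)$ for $\abs{\log\abs{s}}\ge 1$. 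So the whole difficulty sits in a neighbourhood of the singular fiber.

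The second step is a uniform $L^1$ bound. Using the rescaled class, we aim to show that $\int_{\cY_s}(-\vphi)\,\di V_{\gamma_s}\le C$ for every $\vphi\in\PSH(\cY_s,\tilde\gamma_s)$ with $\sup\vphi=0$. The reason the factor $\abs{\log\abs{s}}$ is needed is the local geometry: near a point where $k\ge 2$ components meet, the fiber $\{z_1\cdots z_k=s\}$ contains a degenerating cylinder or torus region $(\C^*)^{k-1}$ in log coordinates $x_i=\log\abs{z_i}/\abs{\log\abs{s}}$. This region is parametrized by a simplex $\{x_i\le 0,\ \sum x_i=1\}$ after rescaling. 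A $\gamma_s$-psh function can drop by $O(\abs{\log\abs s\,})$ across it, since $\log\abs{z_1}$ is almost $\gamma$-psh. After rescaling by $1/\abs{\log\abs{s}}$ these drops become $O(1)$, which is exactly the normalization that makes compactness work. Concretely, we will compare $\vphi$ with its fiberwise torus average. On $(\C^*)^{k-1}\times$(transverse disc) the average $u(x)$ is convex in the log-coordinates, up to a bounded error coming from $\tilde\gamma_s$, and it is Lipschitz with uniform constant on the simplex. Together with $\sup\vphi=0$, and with a sup-versus-mean comparison on the thick parts, which are uniform compact pieces of the components $D_\alpha$, this gives a uniform lower bound on the averages. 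It is in this step that the measure enters. In the annular region the measure $\gamma_s^n$ has density comparable to $\prod\abs{z_i}^2$ times the torus measure, rather than the Calabi--Yau density $\prod\abs{z_i}^{-2}$ used by Li. Our measure therefore puts almost no mass deep in the neck, and this only helps.

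The third step upgrades $L^1$ control to exponential integrability. We will use the local Skoda/Demailly estimate on uniform polydiscs in the charts: a psh function with bounded $L^1$ mean on a polydisc of fixed size has $\int e^{-\alpha\psi}$ uniformly bounded. We apply this to $\psi=\vphi+\rho/\abs{\log\abs s}$, where $\rho$ is a local potential of $\gamma$; this $\psi$ is genuinely psh on the total space chart, restricted to the fiber. Rather than working on the singular fiber, we parametrize the fiber $\{z_1\cdots z_k=s\}$ by $(z_2,\dots,z_{n+1})$ wherever $\abs{z_1}$ is largest. The fiber then becomes a graph over a domain in $\C^n$, and $\psi$ is psh there.

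The main obstacle is that this domain degenerates into thin annuli $\abs{s}^{1-\epsilon}\lesssim\abs{z_i}$. Uniform polydiscs are therefore not available deep in the neck. There we intend to work in log coordinates, where the neck is a product of cylinders of fixed width. The plan is to cover it by unit-size cylinder pieces, apply Skoda on each piece, and use the convex-average control from the second step to bound $\sup$ on each piece below by $-C$. We then sum over the roughly $\abs{\log\abs s}^{k-1}$ pieces weighted by the measure, which decays like $\prod\abs{z_i}^2$. The resulting series is geometric in the log-coordinates, so it converges uniformly in $s$, and this yields the uniform constant $C$.
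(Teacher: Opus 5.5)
Your architecture is the paper's, which is Li's. You cover $\mathcal{Y}_s$ by log scales with bounded overlap, use a local Skoda estimate on each unit-size log piece, compare $dV_{\gamma_s}$ with the log measure there, and sum.

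The paper, however, imports the entire local estimate from Li's Corollary~2.8. That estimate is $\int_{\mathrm{loc}} e^{-\alpha\varphi}\,d\nu_{\log}\le A\int_{\mathrm{loc}} d\nu_{\log}$, uniformly over all log scales, for $\varphi\in\mathrm{PSH}(\mathcal{Y}_s,\tilde\gamma_s)$ with $\sup\varphi=0$. It already contains the uniform lower bound on $\sup_{\mathrm{loc}}\varphi$ deep in the necks. The only new input in the paper is that $W=dV_{\gamma_s}/d\nu_{\log}\asymp |s|^2\sum_{j=0}^p|z_j|^{-2}=\sum_j\prod_{i\neq j}|z_i|^2$ varies by at most a factor $\lesssim 4^{2n}$ on each log scale. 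Note that this is the correct density, not $\prod|z_i|^2$ as you write. Hence the estimate transfers to $dV_{\gamma_s}$, and summing gives at most $A'K$ by bounded overlap and $\int dV_{\gamma_s}=1$. Neither the decay of the measure in the neck nor a geometric series is needed.

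You instead re-derive Li's local estimate, and that is where there is a genuine gap, in Step~2. The uniform lower bound you need on every log piece rests entirely on the claim that the torus average is uniformly Lipschitz on the rescaled simplex, and you give no argument for it. An $L^1$ bound against $dV_{\gamma_s}$, your stated goal, would not suffice, since that measure sees almost nothing deep in the neck. Near-convexity only bounds $u$ from above: a convex function that is $\le 0$ on the simplex and $\ge -C$ at the thick ends can still dip arbitrarily low in the interior. Your heuristic that $\log|z_1|$ is almost $\gamma$-psh shows that drops of order $|\log|s||$ do occur, not that larger drops are excluded.

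Excluding them requires a global flux/mass bound. For $n=1$ this is Stokes/Jensen combined with $\int_{X_s}(\tilde\gamma_s+dd^c\varphi)=V/|\log|s||$, which gives $|u'|\lesssim|\log|s||^{-1}$ in $x=\log|z_1|$. For $n\ge2$ it is not a formality. Pointwise in the transverse variables the torus average can be $-\infty$: take $\varphi=\frac{\epsilon}{|\log|s||}\log|\sigma_H|_h^2-c$ for a divisor $H$ cutting the neck transversally. So you must also average, or take the sup, over transverse polydiscs. Then the walls $\{|z_i|=r\}$ in a chart have boundary in the transverse directions, and the flux through them is not controlled by the global mass without further work. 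Corners with $p\ge2$ add further complications.

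To close the gap, either cite Li's local lemma as the paper does, or supply this argument. Separately, your Step~1 appeal to a uniform Skoda inequality for a smooth family away from $\mathcal{Y}_0$ is beside the point: for $0<|s|\ll1$, all of $\mathcal{Y}_s$ lies near $\mathcal{Y}_0$.
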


We prove the proposition following the local setup of \cite{Li2024}*{Section~2}.
Let $E_i$ ($i\in I$) be the irreducible components of the reduced simple
normal crossing divisor $\cY_0$, and let $d_\gamma$ be the distance induced by
the ambient \Kahler metric $\gamma$.

For each subset $J\subset I$ such that
$E_J:=\bigcap_{i\in J}E_i\neq\emptyset$, Li considers the corresponding
stratum of $\cY_s$ in \cite{Li2024}*{Section~2},
\[
E_J^0
=
\{y\in \cY_s \colon d_\gamma(y,E_J)\lesssim \epsilon\}
\setminus
\bigcup_{J'\supsetneq J}
\{y\in \cY_s \colon d_\gamma(y,E_{J'})\lesssim \epsilon\}.
\]
Thus $E_{J}^0$ may be regarded as an $\epsilon$-tubular neighborhood of
$E_J$ in $\cY_s$, with the deeper strata removed. After shrinking the
base and choosing $\epsilon>0$ sufficiently small, these regions cover
$\cY_s$ for all $0<|s|\ll1$. We remark that all parameters defining
$E^0_J$ (e.g., $\epsilon$ and constants in $\lesssim$) are independent
of $s$.

Fix such a subset $J$, and write $p=|J|-1$. After reindexing the components
in $J$, we may assume $J=\{0,1,\dots,p\}$. Around any point of
\(
E_J \setminus \bigcup_{J'\supsetneq J} E_{J'}
\), semistability gives an {$E_J$-adapted coordinate chart} 
$\{z_i\}_{i=0}^n$ such that $z_j$ is a local
defining function of $E_j$ for $0\le j\le p$, and
\[
s=z_0\cdots z_p.
\]
In such a chart,
\[
\gamma \asymp \sum_{i=0}^n \sqrt{-1}\,\di z_i\wedge \di \bar z_i.
\]
Restricting to $\cY_s$, and using that $V_{\gamma_s}=V$ is independent of
$s$, we obtain locally
\[
\di V_{\gamma_s}
\asymp
\sum_{j=0}^p
\left(
\prod_{\substack{0\le i\le p\\ i\neq j}}
\sqrt{-1}\,\di z_i\wedge \di \bar z_i
\right)
\wedge
\left(
\prod_{k=p+1}^n
\sqrt{-1}\,\di z_k\wedge \di \bar z_k
\right).
\]

Following \cite{Li2024}*{Lemma~2.6}, we 
introduce {log scales} 
on $E^0_J$. Fix an {$E_J$-adapted chart} intersecting $E_J^0$
with $\C^\ast$-coordinates $z_1, \dots, z_p$ and 
$\C$-coordinates $z_{p+1}, \dots, z_n$. For 
a point $q$ on this chart, we refer to the subregion
\[
\{\frac{1}{2}\abs{z_i(q)} \lesssim \abs{z_i}
\lesssim 2 \abs{z_i(q)}, \, 1 \leq i \leq p\}
\]
as a {log scale}.

On each log scale we use the {log measure}
\[
\di \nu_{\log}
=
\left(
\prod_{i=1}^p
\sqrt{-1}\,\di \log z_i\wedge \di \log \bar z_i
\right)
\wedge
\left(
\prod_{k=p+1}^n
\sqrt{-1}\,\di z_k\wedge \di \bar z_k
\right).
\]

We now compare $\di V_{\gamma_s}$ and $\di\nu_{\log}$. Since
$s=z_0\cdots z_p$, on $\cY_s$ we have
\[
0=\di\log s=\di\log z_0+\cdots+\di\log z_p.
\]
Hence, for every $0\le j\le p$,
\[
\prod_{i=1}^p
\sqrt{-1}\,\di \log z_i\wedge \di \log \bar z_i
=
\prod_{\substack{0\le i\le p\\ i\neq j}}
\frac{\sqrt{-1}}{|z_i|^2}\,
\di z_i\wedge \di \bar z_i.
\]
Therefore,
\begin{equation}
\label{eqn:relation-volume-log-measure}
\di V_{\gamma_s}
\asymp
\sum_{j=0}^p
\left(
\prod_{\substack{0\le i\le p\\ i\neq j}}
|z_i|^2
\right)
\di\nu_{\log}
=
|s|^2
\left(
\sum_{j=0}^p |z_j|^{-2}
\right)
\di\nu_{\log}.
\end{equation}

With these preliminaries in place, the proposition follows by combining Li's
local Skoda estimate on each log scale (\cite{Li2024}*{Corollary~2.8}) with a bounded-overlap covering argument in \cite{Li2024}*{Theorem~2.9}.

\begin{proof}[Proof of \Cref{prop:rescaled-skoda-kahler-snc}]
    We choose the charts so that each point on $\cY_s$ is covered by at most $K$ log scales, where $K$ is a uniform constant independent of $s$ (see \cite{Li2024}*{Theorem~2.9}).
    
    By the local Skoda estimate in \cite{Li2024}*{Corollary~2.8}, there exist uniform constants $\alpha, A > 0$ such that on each log scale, we have
    \begin{equation}
        \label{eqn:log-skoda}
        \int_{\rm loc} \exp(-\alpha \varphi) \di \nu_{\log} \leq A \int_{\rm loc} \di \nu_{\log}.
    \end{equation}

    On each log scale, define the weight function $$W(z) \coloneqq \frac{\di V_{\gamma_s}}{\di \nu_{\log}}. $$

    From \Cref{eqn:relation-volume-log-measure},
    \[
    \di V_{\gamma_s} \asymp \underbrace{\sum_{j=0}^p \left(\prod_{\substack{i \neq j \\ 0 \leq i \leq p}} \abs{z_i}^2 \right)}_{\asymp W(z)} \di \nu_{\log},
    \]
    we obtain $W(z) \asymp \sum_{j=0}^p \left(\prod_{i \neq j, \, 0 \leq i \leq p} \abs{z_i}^2 \right) = \abs{s}^2 
    \sum_{j=0}^p\abs{z_j}^{-2} $. (Here, $\asymp$ denotes uniform equivalence independent of $s$).
    
    By definition of log scales, we have 
    \[
    1 \leq \frac{\max_{\rm loc} \abs{z_i}}{\min_{\rm loc}\abs{z_i}} \lesssim 4, \quad 1 \leq i \leq p.
    \]
    Since $\abs{z_0} = \frac{\abs{s}}{\prod_{i = 1}^p\abs{z_i}}$, it follows that 
    \[
    1 \leq \frac{\max_{\rm loc} \abs{z_0}}{\min_{\rm loc}\abs{z_0}} \lesssim 4^p.
    \]
    Thus, for the weight function on each log scale, we have 
    \begin{equation}
        \label{eqn:volume-over-log-weight-bound}
        1 \leq \frac{\max_{\rm loc} W(z)}{\min_{\rm loc} W(z)}
        \leq \frac{\sum_{j=0}^p (\min_{\rm loc} \abs{z_j})^{-2}}{\sum_{j=0}^p (\max_{\rm loc} \abs{z_j})^{-2}}
        \lesssim 4^{2p} \lesssim 4^{2n}.
    \end{equation}
    (We use $\lesssim$ to indicate a uniform upper bound independent of $s$).

    Therefore, we deduce a new local Skoda estimate with respect to the measure $\di V_{\gamma_s}$:
    \begin{align*}
        \int_{\rm loc} \exp(-\alpha \varphi) \di V_{\gamma_s}
        &\leq \max_{\rm loc} W(z) \int_{\rm loc} \exp(-\alpha \varphi) \di \nu_{\log} \\
        &\leq A \left(\max_{\rm loc} W(z)\right)\int_{\rm loc} \di \nu_{\log}
        \quad (\text{Using } 
        \cref{eqn:log-skoda})
        \\
        &= A \left(\max_{\rm loc} W(z)\right)\int_{\rm loc} \frac{\di V_{\gamma_s}}{W(z)}\\
        &\leq A \frac{\max_{\rm loc} W(z)}{\min_{\rm loc} W(z)} \int_{\rm loc} \di V_{\gamma_s}\\
        &\lesssim 4^{2n} A \int_{\rm loc} \di V_{\gamma_s} 
        \quad (\text{Using } 
        \cref{eqn:volume-over-log-weight-bound})
        \\
        &\leq A' \int_{\rm loc} \di V_{\gamma_s},
    \end{align*}
    where $A'$ is another uniform constant.

    Summing over the local Skoda estimates from all log scales, $\int_{\cY_s} \exp(-\alpha \vphi) \di V_{\gamma_s}$ is bounded by 
    \[
    A' \sum_{\text{log scale}} \int_{\rm loc} \di V_{\gamma_s} \leq A' K \int_{\cY_s} \di V_{\gamma_s} = A' K \eqqcolon C.
    \]
\end{proof}

\begin{remark}
Although \cite{Li2024}*{Section~2} is written in the projective setting, the
arguments used in \cite{Li2024}*{Lemma~2.6 and Corollary~2.8} are local on the
total space. They use only the semistable coordinate model
$s=z_0\cdots z_p$ and the uniform equivalence of the ambient \Kahler metric
with the Euclidean metric in such coordinates. Hence the same local estimates
apply to semistable degenerations of compact \Kahler manifolds.
\end{remark}

\subsection{Method of auxiliary Monge-Amp\`ere equations}
\label[subsection]{sec:auxiliary-MA}

In this subsection, we review the method of auxiliary Monge-Amp\`ere
equations introduced by Chen--Cheng \cite{Chen-Cheng2021} and Guo--Phong--Tong, and further developed by
Guo--Phong--Song--Sturm in a recent series of papers
\cites{Guo-Phong-Tong2023,Guo-Phong-Sturm2024,Guo-Phong-Song-Sturm2024,GPSS-Sobolev}.
This method is highly effective for estimating Green functions on
\Kahler manifolds. The core idea is to construct auxiliary complex
Monge-Amp\`ere equations that satisfy proper bounds, such as the uniform $L^\infty$-estimate in \Cref{thm:uniform-MA}, and whose
solutions can be used to bound solutions of the corresponding Poisson
equations; see \Cref{lem:comparing_laplace_and_MA}. For a broader
discussion of this technique and its other applications, we refer the
reader to the survey \cite{Guo-Phong-Survey}.

Our exposition closely follows the framework of Guedj--T\^o
\cite{Guedj-To2025}, which adapts naturally to our setting. Throughout
this subsection, we work under the following assumptions.

\begin{setup}
\label[setup]{setup:kahler-family-central-singular-fiber}
Let $\pi \colon X \to \D_s$ be a proper, surjective holomorphic map
with connected fibers, where $X_0 \coloneqq \pi^{-1}(0)$ is the unique
singular fiber. Assume that $X$ is equipped with a \Kahler form, and
that each regular fiber $X_s \coloneqq \pi^{-1}(s)$, for $s \neq 0$,
is an $n$-dimensional complex manifold.
\end{setup}

Compared with Setting~1.5 in \cite{Guedj-To2025}, we drop the assumption
that $X_0$ is irreducible, thus allowing an arbitrary singular central
fiber.

In this subsection, we fix a semi-positive form $\beta$ on $X$, set
$\beta_s \coloneqq \beta|_{X_s}$, and assume that
\(
V_{\beta_s} \coloneqq \int_{X_s} \beta_s^n
\)
is uniformly bounded away from $0$ and $\infty$. Let
\(
\di V_{\beta_s} \coloneqq \frac{\beta_s^n}{V_{\beta_s}}
\)
be the normalized volume form on $X_s$.

Following \cite{Guedj-To2025}*{Definition~1.6}, we introduce the
following class. We fix a positive $\delta \in (0, 1]$. Here a relative \Kahler form on
$\pi^{-1}(\D_\delta)\setminus X_0$ means a smooth real $(1,1)$-form
whose restriction to each fiber $X_s$, $s \in \D_\delta^\circ$, is a
\Kahler form.

\begin{definition}
\label[definition]{dfn:skoda_class}
Fix $p > 1$ and $B,C,\alpha > 0$, $\delta \in (0,1)$. We let
$\cK_{\rm Skoda}((X, \beta),p,B,C,\alpha,\delta)$ denote the set of all
relative \Kahler forms $\theta$ on $\pi^{-1}(\D_\delta)\setminus X_0$
such that:
\begin{enumerate}
\item For every $s \in \D_\delta^\circ$, the restriction
$\theta_s \coloneqq \theta|_{X_s}$ satisfies the $L^p$-condition on
its volume density, namely
\[
\int_{X_s} f_s^p \,\di V_{\beta_s} \le B,
\qquad
\frac{\theta_s^n}{V_{\theta_s}} = f_s \,\di V_{\beta_s},
\]
where $V_{\theta_s} \coloneqq \int_{X_s} \theta_s^n$.

\item For every $s \in \D_\delta^\circ$ and every
$\vphi_s \in \PSH(X_s,\theta_s)$ with $\sup_{X_s}\vphi_s = 0$, we
have the uniform Skoda estimate
\[
\int_{X_s} \exp(-\alpha \vphi_s)\,\di V_{\beta_s} \le C.
\]
\end{enumerate}
\end{definition}

As a consequence of the rescaled Skoda estimate in
\Cref{prop:rescaled-skoda-general-sing}, we obtain the following
proposition.

\begin{proposition}
\label[proposition]{prop:rescaled-in-the-class}
Let $\beta$ be the background \Kahler metric on $X$. By
\Cref{prop:rescaled-skoda-general-sing}, the rescaled relative form
$\tilde{\beta}$ on $\pi^{-1}(\D_\delta)\setminus X_0$, defined by
\[
\tilde{\beta}|_{X_s} \coloneqq \tilde{\beta}_s
\coloneqq \frac{\beta_s}{\abs{\log \abs{s}}},
\]
belongs to $\cK_{\rm Skoda}((X, \beta),p,1,C,\alpha,\delta)$ for any $p>1$ and
for some $C,\alpha>0$, $\delta \in (0,1)$.
\end{proposition}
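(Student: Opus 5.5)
The plan is to check the two conditions of \Cref{dfn:skoda_class} directly for $\theta = \tilde{\beta}$, fixing $\delta \in (0,1)$ small enough that \Cref{prop:rescaled-skoda-general-sing} holds for $0<\abs{s}<\delta$.

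Along the way we also confirm that $\tilde{\beta}$ is a smooth relative \Kahler form on $\pi^{-1}(\D_\delta)\setminus X_0$. The factor $\abs{\log\abs{s}}^{-1}$ is a smooth positive function of $s\neq 0$ (pulled back by $\pi$). Hence $\tilde\beta = \beta/\abs{\log\abs{\pi}}$ is a smooth real $(1,1)$-form off $X_0$. It need not be closed on the total space, but \Cref{dfn:skoda_class} only asks for a relative \Kahler form. Its restriction to each $X_s$ is the positive multiple $\beta_s/\abs{\log\abs{s}}$ of the \Kahler form $\beta_s$, so it is \Kahler.

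For condition (1), the key observation is that rescaling by a constant on each fiber does not change the normalized volume form. We have $\tilde\beta_s^n = \abs{\log\abs{s}}^{-n}\beta_s^n$ and $V_{\tilde\beta_s} = \abs{\log\abs{s}}^{-n}V_{\beta_s}$. Therefore
\[
\frac{\tilde\beta_s^n}{V_{\tilde\beta_s}} = \frac{\beta_s^n}{V_{\beta_s}} = \di V_{\beta_s},
\]
so $f_s \equiv 1$. Since $\di V_{\beta_s}$ is a probability measure, $\int_{X_s} f_s^p\,\di V_{\beta_s} = 1$ for every $p>1$, which gives $B=1$.

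For condition (2), take $\vphi_s \in \PSH(X_s,\tilde\beta_s)$ with $\sup_{X_s}\vphi_s = 0$. This is exactly the hypothesis of \Cref{prop:rescaled-skoda-general-sing}, applied to the closed semi-positive (here \Kahler) form $\beta$, whose restrictions $\beta_s$ are \Kahler for $s\neq 0$. That proposition gives uniform constants $\alpha, C>0$, independent of $s$ and $\vphi_s$, with $\int_{X_s} e^{-\alpha\vphi_s}\,\di V_{\beta_s}\le C$ for $0<\abs{s}<\delta$.

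There is no real obstacle here: all the analytic difficulty is in \Cref{prop:rescaled-skoda-general-sing}. The only subtlety is bookkeeping. One must make sure that the measure in the Skoda estimate is the normalized background volume $\di V_{\beta_s}$, as in the definition, and not $\tilde\beta_s^n$. One must also shrink $\delta$ so that the ``$0<\abs{s}\ll 1$'' range of that proposition covers $\D_\delta^\circ$.
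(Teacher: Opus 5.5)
Your proposal is correct and follows essentially the same route as the paper: shrink to $\D_\delta^\circ$ so that \Cref{prop:rescaled-skoda-general-sing} applies, note that the normalized volume form of $\tilde\beta_s$ is exactly $\di V_{\beta_s}$ (so $f_s\equiv 1$ and $B=1$ for every $p>1$), and read off condition (2) directly from the rescaled Skoda estimate. Your extra check that $\tilde\beta$ is a smooth relative \Kahler form on $\pi^{-1}(\D_\delta)\setminus X_0$ (using $\delta<1$ so that $\abs{\log\abs{s}}>0$) is a harmless detail that the paper leaves implicit.
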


\begin{proof}
We shrink the disk to $\D_\delta^\circ$ so \Cref{prop:rescaled-skoda-general-sing} applies. For each $s \in \D_\delta^\circ$, the volume density $f_s$ of
$\tilde{\beta}_s$ with respect to $\di V_{\beta_s}$ is equal to $1$,
since
\[
\frac{\tilde{\beta}_s^n}{V_{\tilde{\beta}_s}}
=
\frac{\beta_s^n}{V_{\beta_s}}
=
\di V_{\beta_s}.
\]
Hence
\( \int_{X_s} f_s^p \,\di V_{\beta_s} = 1 \) 
and the condition in \Cref{dfn:skoda_class}(1)
holds for $B = 1$ and any $p > 1$.
The Skoda inequality in \Cref{dfn:skoda_class}(2) follows directly from
\Cref{prop:rescaled-skoda-general-sing}.
\end{proof}

\begin{remark}
Under \Cref{setup:kahler-family-central-singular-fiber}, assume in
addition that $X_0$ is reduced and irreducible. Fix $\delta \in (0,1)$.
Let $\theta$ be a relative \Kahler form on
$\pi^{-1}(\D_\delta)\setminus X_0$. If the following two conditions
hold:
\begin{itemize}
\item there exist $p>1$ and $B>0$ such that
\[
\int_{X_s}
\left(\frac{\theta_s^n}{V_{\theta_s}\,\di V_{\beta_s}}\right)^p
\di V_{\beta_s}
\le B
\]
for all $s \in \D_\delta^\circ$;

\item there exists $A>0$ such that $[\theta_s] \le A[\beta_s]$ for all
$s \in \D_\delta^\circ$;
\end{itemize}
then \cite{Guedj-To2025}*{Theorem~1.8} yields
\[
\theta \in \cK_{\rm Skoda}((X, \beta),p,B,C,\alpha,\delta),
\]
where $\alpha = \alpha(n,p,A,B)$ and $C = C(\alpha,n,p,A,B)$.
\end{remark}

By \cite{DNGG2022}*{Theorem~A}, we have the uniform estimate for 
Monge-Amp\`ere equations.
\begin{theorem}
    \label[theorem]{thm:uniform-MA}
    Fix $p > 1$ and $B, C, \alpha > 0, \delta\in (0, 1)$. Let $\theta \in \cK_{\rm Skoda}((X, \beta), p, B, C, \alpha, \delta)$. Assume that there exists $\vphi_s \in \PSH(X_s, \theta_s)
    \cap L^\infty(X_s)$,  $p' > 1$ and $B' > 0$ independent of $s \in \D^\circ_{\delta}$
    such that
    \[
    \frac{1}{V_{\theta_s}}(\theta_s + \ddc \vphi_s)^n = g_s \di V_{\beta_s},
    \]
    with $\int_{X_s} g_s^{p'} \di V_{\beta_{s}} \leq B'$. Then 
    $\mathrm{Osc}_{X_s}(\vphi_s) \leq L  = L(p', B', C, \alpha, n)$.
\end{theorem}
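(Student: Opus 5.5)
The plan is to run the standard Ko{\l}odziej-type $L^\infty$ argument fiberwise, as in \cite{DNGG2022}*{Theorem~A} or its family version in \cite{Guedj-To2025}, and to track every constant so that it depends only on $(p',B',C,\alpha,n)$. The only input that varies with $s$ is the pair consisting of the Skoda constant and the density bound, and both are uniform by hypothesis. We normalize so that $\sup_{X_s}\vphi_s=0$, and we write $\mu_s = g_s\,\di V_{\beta_s}$. By definition $\mu_s$ is a probability measure equal to $(\theta_s+\ddc\vphi_s)^n/V_{\theta_s}$.

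\textbf{Step 1: volume--capacity domination.} For a Borel set $K\subset X_s$, set
\[
\mathrm{Cap}_s(K)=\sup\Big\{\tfrac{1}{V_{\theta_s}}\int_K(\theta_s+\ddc u)^n : u\in\PSH(X_s,\theta_s),\ -1\le u\le 0\Big\}.
\]
We will show $\mathrm{Vol}_{\beta_s}(K)\le C_1\exp\big(-c\,\mathrm{Cap}_s(K)^{-1/n}\big)$, with constants depending only on $(C,\alpha,n)$. To do this, take the relative extremal function $V_K^*$ (the $\theta_s$-psh envelope of functions that are $\le 0$ on $K$). The usual comparison gives $\sup V_K^*\approx \mathrm{Cap}_s(K)^{-1/n}$, up to dimensional constants, after normalizing by $V_{\theta_s}$. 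Applying the Skoda hypothesis (2) to $V_K^*-\sup V_K^*$, which vanishes on $K$ up to a pluripolar set, then yields $\mathrm{Vol}(K)\le C e^{-\alpha\sup V_K^*}$. Hölder's inequality with exponent $p'$ turns this into
\[
\mu_s(K)\le B'^{1/p'}\,\mathrm{Vol}(K)^{1/q}\le A\,h\big(\mathrm{Cap}_s(K)^{-1/n}\big)^{-1}
\]
with $h$ of exponential growth, so $\int^\infty h^{-1/n}<\infty$.

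\textbf{Step 2: Ko{\l}odziej's iteration.} Let $f(t)=\mathrm{Cap}_s(\{\vphi_s<-t\})^{1/n}$. The comparison principle gives
\[
\tau^n\,\mathrm{Cap}_s(\{\vphi_s<-t-\tau\})\le \mu_s(\{\vphi_s<-t\})\quad\text{for }\tau\in(0,1].
\]
Combining this with Step 1 shows that $f$ satisfies the De Giorgi-type decay $\tau f(t+\tau)\le A' f(t)^{1+\epsilon}$ (or the exponential analogue of it). The standard lemma \cite{DNGG2022} then implies that $f$ vanishes beyond some $t_0$ depending only on $f(t_1)$ and $A'$. To start the iteration, we need $f(t_1)$ small for some controlled $t_1$. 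For this we use
\[
\mathrm{Cap}_s(\{\vphi_s<-t\})\le \tfrac{1}{t}\int(-\vphi_s)\,(\theta_s+\ddc u)^n/V_{\theta_s}+\dots,
\]
where the uniform $L^1$ bound on $\vphi_s$ against the Monge--Amp\`ere measures follows once more from Skoda, via $\int e^{-\alpha\vphi}\le C$. This gives $\mathrm{Osc}(\vphi_s)\le t_0\le L$.

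The main obstacle is Step 1. Here one must check that the relative extremal function estimate and the $L^1$ control use no information about the fiber beyond $V_{\theta_s}$-normalized Monge--Amp\`ere masses and the Skoda inequality; in particular, they must not rely on any uniform curvature or uniform comparison between $\theta_s$ and $\beta_s$. The normalization by $V_{\theta_s}$ is what makes all the capacities scale-invariant, and this is essential here because $\theta$ may be the rescaled $\tilde\beta$, whose volume tends to zero. If the direct capacity route becomes awkward, the fallback is the envelope/auxiliary-equation proof of \cite{Guedj-To2025}*{Theorem~2.?}. That proof also uses only the Skoda constant and the $L^{p'}$ density bound, so we would quote it verbatim, since \Cref{dfn:skoda_class} was designed to match its hypotheses.
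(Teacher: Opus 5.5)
Your plan is sound and is essentially the paper's argument unpacked. The paper's proof is a single citation: apply \cite{DNGG2022}*{Theorem~A} on each fiber with $X=X_s$, $\omega=\theta_s$, $\nu=\di V_{\beta_s}$, $\mu=g_s\nu$. Condition (2) of \Cref{dfn:skoda_class} is exactly the integrability hypothesis on $(\omega,\nu)$ in that theorem, so your fallback of quoting the result verbatim is precisely what the paper does. In your self-contained version, two pieces are correct and use only $(C,\alpha,n,p',B')$: Step~1, where only the easy, scale-invariant direction $\sup V_K^*\ge \mathrm{Cap}_s(K)^{-1/n}$ is needed, followed by Skoda applied to $V_K^*-\sup V_K^*$ and H\"older; and the comparison inequality in Step~2.

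\textbf{The gap: the initialization of the iteration fails as written.} You bound $\mathrm{Cap}_s(\{\vphi_s<-t\})$ by $t^{-1}\sup_u\int(-\vphi_s)(\theta_s+\ddc u)^n/V_{\theta_s}$ and say this is controlled ``once more from Skoda''. But the Skoda hypothesis only controls integrals against $\di V_{\beta_s}$; it says nothing directly about the measures $(\theta_s+\ddc u)^n$. The usual route integrates by parts down to $\int(-\vphi_s)\theta_s^n/V_{\theta_s}$ and then writes $\theta_s^n/V_{\theta_s}=f_s\,\di V_{\beta_s}$. That step needs condition (1) of \Cref{dfn:skoda_class}, which brings $p$ and $B$ into the constant, contrary to the claimed dependence $L=L(p',B',C,\alpha,n)$. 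It is exactly the comparison between $\theta_s$ and $\beta_s$ you said must be avoided.

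\textbf{The fix uses only what you already have.} Normalize $\sup_{X_s}\vphi_s=0$ and take $\tau=1$ in your comparison inequality. Together with Chebyshev and H\"older ($1/p'+1/q'=1$) this gives
\[
\mathrm{Cap}_s(\{\vphi_s<-t-1\})\le \mu_s(\{\vphi_s<-t\})\le \frac1t\int_{X_s}(-\vphi_s)\,g_s\,\di V_{\beta_s}\le \frac{B'^{1/p'}}{t}\Big(\int_{X_s}(-\vphi_s)^{q'}\,\di V_{\beta_s}\Big)^{1/q'} .
\]
The last integral is at most $(q'/(e\alpha))^{q'}C$, since $x^{q'}\le (q'/(e\alpha))^{q'}e^{\alpha x}$ for $x\ge 0$ and the Skoda bound applies. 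So the capacity of a sublevel set is small at a level controlled by $(p',B',C,\alpha)$. The De Giorgi-type lemma then finishes with constants depending only on $(p',B',C,\alpha,n)$.

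In the paper's application $\theta=\tilde\beta$ has $f_s\equiv 1$, so the issue is harmless there. It does matter for the statement as formulated.
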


\begin{proof}
    Since $\theta \in \cK_{\rm Skoda}((X, \beta), p, B, C, \alpha, \delta)$, 
    it follows directly from \cite{DNGG2022}*{Theorem~A}. Using the notations
    therein, we take 
    \[
    X \coloneqq X_s, \quad \omega \coloneqq \theta_s, \quad 
    \nu \coloneqq \di V_{\beta_s}, \quad \mu \coloneqq g_s \nu.
    \]
\end{proof}

The following comparison lemma is the key ingredient in the method of
auxiliary Monge-Amp\`ere equations.

\begin{lemma}[\cite{Guedj-To2025}*{Proposition~1.4}]
\label[lemma]{lem:comparing_laplace_and_MA}
Suppose that $(X^n,\omega)$ is a compact \Kahler manifold of complex
dimension $n$. Fix $t > 0$, $p > 1$, and
$0 \le f \in L^{np}(\omega^n)$. Let $v$ be the unique bounded
$\omega$-sh function, and let $\vphi$ be the unique bounded
$\omega$-psh function, satisfying
\[
(\omega+\ddc v)\wedge \omega^{n-1}
=
e^{tv}f\,\omega^n
\quad \text{and}
\quad
(\omega+\ddc \vphi)^n
=
e^{nt\vphi}f^n\,\omega^n.
\]
Then $\vphi \le v$.
\end{lemma}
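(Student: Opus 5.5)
We will argue by the comparison principle for the Laplace-type operator $u \mapsto \ddc u \wedge \omega^{n-1}$, with the function $\vphi$ serving as a subsolution of the equation solved by $v$. It is convenient to set $\omega_v \coloneqq \omega + \ddc v$ and $\omega_\vphi \coloneqq \omega + \ddc \vphi$.

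The first step is a pointwise inequality obtained from the arithmetic–geometric mean inequality. Where $\vphi$ is smooth and $\omega_\vphi \geq 0$, the eigenvalues $\mu_1,\dots,\mu_n \ge 0$ of $\omega_\vphi$ with respect to $\omega$ satisfy
\[
\frac{\omega_\vphi\wedge\omega^{n-1}}{\omega^n} = \frac1n\sum_i \mu_i \;\ge\; \Bigl(\prod_i\mu_i\Bigr)^{1/n} = \Bigl(\frac{\omega_\vphi^n}{\omega^n}\Bigr)^{1/n} = e^{t\vphi} f.
\]
Here the normalization of the trace (the factor $\frac{1}{n}$) is matched to the way the two equations are written. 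Hence
\[
\omega_\vphi\wedge\omega^{n-1} \ge e^{t\vphi} f\,\omega^n,
\]
so $\vphi$ is a subsolution of the linear-in-$\ddc$ equation $\omega_v\wedge\omega^{n-1} = e^{tv} f\,\omega^n$ that $v$ solves exactly.

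The second step is the comparison argument, under the simplifying assumption that everything is smooth. At a point where $\vphi - v$ attains its maximum we have $\ddc(\vphi - v) \le 0$, and therefore
\[
e^{t\vphi}f \;\le\; \frac{\omega_\vphi\wedge\omega^{n-1}}{\omega^n} \;\le\; \frac{\omega_v\wedge\omega^{n-1}}{\omega^n} \;=\; e^{tv}f.
\]
If $f>0$ at that point, then $\vphi \le v$ there, and hence $\vphi \le v$ everywhere. For a version that does not rely on locating a maximum point, we use the weak form instead. Integrate
\[
\ddc(\vphi - v)\wedge\omega^{n-1} \ge (e^{t\vphi}-e^{tv}) f\,\omega^n
\]
against the test function $(\vphi - v)_+$ and integrate by parts:
\[
-\int \di(\vphi-v)_+\wedge \di^c(\vphi-v)_+\wedge\omega^{n-1} \;\ge\; \int (\vphi-v)_+\,(e^{t\vphi}-e^{tv}) f\,\omega^n \;\ge\; 0.
\]
The left-hand side is $\le 0$, so $(\vphi-v)_+$ is constant. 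If that constant $c$ were positive, then $\vphi - v = c$ and $\ddc \vphi = \ddc v$. Substituting into the subsolution inequality gives $e^{tv}f\,\omega^n \ge e^{t(v+c)} f\,\omega^n$, which forces $f\equiv 0$. This contradicts the equation for $v$: integrating it over $X$ gives $\int e^{tv} f\,\omega^n = \int\omega^n > 0$.

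The main obstacle is the low regularity. Both $v$ and $\vphi$ are merely bounded, and $f$ lies only in $L^{np}$, so neither the pointwise AM–GM step nor the integration by parts is directly justified. The plan is to approximate $f$ by smooth positive functions $f_j \to f$ in $L^{np}$ and solve the two equations with $f_j$ in place of $f$. Both have smooth solutions: the Monge–Ampère equation $(\omega+\ddc\vphi_j)^n = e^{nt\vphi_j}f_j^n\,\omega^n$ by Aubin–Yau, and the elliptic equation for $v_j$ by standard semilinear theory. The smooth argument above then gives $\vphi_j \le v_j$. Finally, we pass to the limit. The uniform $L^\infty$ bounds on $\vphi_j$ follow from Ko{\l}odziej's estimate, since $f_j^n$ is bounded in $L^p$; the corresponding bounds for $v_j$ follow from $L^p$ elliptic theory, using $f_j \in L^{np}$ with $np > n$. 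Stability and uniqueness of the bounded solutions (via the comparison principle for each equation) yield $\vphi_j\to\vphi$ and $v_j\to v$. Since the inequality $\vphi_j \le v_j$ persists under these limits, we conclude $\vphi \le v$.
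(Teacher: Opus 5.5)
Your proof is correct, and its core is the same as the Guedj--T\^o argument \cite{Guedj-To2025}, which the paper cites without reproducing. The arithmetic--geometric mean (mixed Monge--Amp\`ere) inequality makes $\varphi$ a subsolution of the equation solved by $v$. A comparison principle for $u\mapsto(\omega+dd^c u)\wedge\omega^{n-1}-e^{tu}f\,\omega^n$, whose zero-order term is increasing in $u$, then gives $\varphi\le v$. What differs is how low regularity is handled. As I understand their argument, Guedj--T\^o work directly with the bounded potentials. The implication $(\omega+dd^c\varphi)^n\ge g\,\omega^n\Rightarrow(\omega+dd^c\varphi)\wedge\omega^{n-1}\ge g^{1/n}\omega^n$ is available for bounded $\omega$-psh functions (Ko{\l}odziej, Dinew), and the comparison principle is used for bounded $\omega$-sh sub- and supersolutions, so nothing is approximated. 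Your route keeps the pointwise step elementary. It pays for this with Aubin--Yau, Ko{\l}odziej's $L^\infty$ and stability estimates, and uniqueness for both equations.

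Two points in your limiting step should be made explicit.

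\emph{A priori bounds.} Because of the factors $e^{nt\varphi_j}$ and $e^{tv_j}$, the right-hand sides are bounded in $L^p$ (resp.\ $L^{np}$) only once you have an upper bound on $\sup\varphi_j$ and $\sup v_j$. This follows from the identity $\int e^{nt\varphi_j}f_j^n\omega^n=\int e^{tv_j}f_j\,\omega^n=\int\omega^n$ together with the following:
\begin{itemize}
\item Jensen's inequality;
\item uniform $L^q$ bounds for sup-normalized quasi-psh and quasi-sh functions (for quasi-sh functions only $q<\frac{n}{n-1}$ when $n\ge2$, which still suffices to pair with $f_j\in L^{np}$ because $p>1$);
\item $\int f^n\omega^n>0$, which holds because $\varphi$ exists.
\end{itemize}

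\emph{Passing to the limit.} $L^1$-convergence of uniformly bounded $\omega$-psh functions does not by itself carry Monge--Amp\`ere measures to the limit. Use Ko{\l}odziej's stability estimate (densities bounded in $L^p$) to upgrade to uniform convergence before invoking uniqueness. For $v_j$, the compact embedding $W^{2,np}\hookrightarrow C^0$ (since $np>n$) is enough.
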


For $\theta \in \cK_{\rm Skoda}((X, \beta),p,B,C,\alpha,\delta)$, we obtain the
following Laplacian estimates on $(X_s,\theta_s)$ for
$s \in \D_\delta^\circ$.

\begin{lemma}[\cite{Guedj-To2025}*{Lemma~2.1}]
Fix $a>0$, and let $v$ be a quasi-subharmonic function on $X_s$ such
that
\(
\Delta_{\theta_s} v \ge -a.
\)
Then
\[
\sup_{X_s} v
\le
L\left(
a+\frac{1}{V_{\theta_s}}\int_{X_s}\abs{v}\,\theta_s^n
\right),
\]
where $L=L(n,p,B,C,\alpha)>0$ depends only on $n,p,B,C,\alpha$.
\end{lemma}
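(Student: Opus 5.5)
We prove the mean-value type inequality
\[
\sup_{X_s} v \le L\Bigl(a+\frac{1}{V_{\theta_s}}\int_{X_s}\abs{v}\,\theta_s^n\Bigr)
\]
by comparing $v$ with an auxiliary Monge--Amp\`ere potential through \Cref{lem:comparing_laplace_and_MA}, and then bounding that potential with the uniform $L^\infty$ estimate of \Cref{thm:uniform-MA}. The argument is on a fixed fiber $X_s$ with $\omega\coloneqq\theta_s$; the only input used from the class $\cK_{\rm Skoda}$ is that every constant produced depends only on $(n,p,B,C,\alpha)$.

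\textbf{Normalization.} Write $\Delta_{\theta_s}v\ge -a$ as $\tr_{\omega}(\ddc v)\le a$. Since replacing $v$ by $v/a$ (when $a>0$) scales both sides linearly, we may assume $a=1$. Then $u\coloneqq -v$ satisfies $\tr_\omega(\ddc u)\ge -1$, that is, $u$ is $\omega/n$-subharmonic, equivalently $(\omega/n+\ddc u)\wedge\omega^{n-1}\ge 0$.

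\textbf{Step 1: reduction to the positive part.} Set $M\coloneqq\sup v$. If $M\le 0$ there is nothing to prove, so assume $M>0$. Let $U\coloneqq\{v>M/2\}$, or better a superlevel set $\{v>M-\tau\}$ with a parameter $\tau$ to be chosen. The goal is to show that the normalized volume of such a set cannot be too small relative to $M$. Once $\mathrm{vol}(\{v>M/2\})\gtrsim c$ is known, the trivial bound $\frac1V\int\abs v\,\omega^n\ge \frac M2 c$ finishes the proof.

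\textbf{Step 2: the auxiliary equations.} Take $f$ proportional to $\mathbf 1_{U}$, suitably smoothed, normalized so that the volume constraint is satisfied. Solve
\[
(\omega+\ddc\psi)^n=\frac{\mathbf 1_U}{\mathrm{vol}(U)}V_\omega\,\omega^n,\qquad \sup\psi=0.
\]
The density with respect to $\di V_{\beta_s}$ is $g_s=f_s\mathbf 1_U/\mathrm{vol}_{\omega}(U)$. Its $L^{p'}$ norm is controlled by Hölder and the $L^p$ bound on $f_s$ (condition (1)), with a constant that grows like a negative power of the $\di V_{\beta_s}$-mass of $U$. To avoid needing a bound on this power, follow Guedj--T\^o and Guo--Phong--Sturm by using instead the function $(v-M+\tau)_+$ as the right-hand side weight, raised to a power. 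Then \Cref{thm:uniform-MA} gives $\mathrm{Osc}\,\psi\le L$ in terms of an energy of $(v-M+\tau)_+$.

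\textbf{Step 3: comparison.} Consider $\Phi\coloneqq -\epsilon(-\psi+\Lambda)^{n/(n+1)}-(v-M+\tau)$ and apply the maximum principle on $U$, in the style of Guo--Phong--Sturm. At an interior maximum, use the arithmetic--geometric mean inequality $\tr_\omega(\omega_\psi)\ge n(\omega_\psi^n/\omega^n)^{1/n}$ together with $\tr_\omega\ddc v\le 1$. This forces $(v-M+\tau)_+\lesssim(-\psi+\Lambda)^{n/(n+1)}$. Alternatively, invoke \Cref{lem:comparing_laplace_and_MA} directly with $t$ small, which gives $\vphi\le v$-type domination. Then integrate: the exponential integrability of $-\psi$ from the Skoda condition (2) yields an Orlicz bound on $(v-M+\tau)_+$. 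Iterating in $\tau$, in De Giorgi fashion, gives $\tau\lesssim$ the measure of the superlevel set to a positive power, and hence $M\lesssim 1+\frac1V\int\abs v$.

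\textbf{Main obstacle.} The hard step is Step 3: choosing the right-hand side so that both the $L^{p'}$ hypothesis of \Cref{thm:uniform-MA} holds uniformly in $s$, and the maximum principle closes despite $v$ being only quasi-subharmonic. I would first try the Guo--Phong--Sturm test function above and smooth $v$ by regularization if needed. Every constant depends only on the Skoda and $L^p$ data, so $L=L(n,p,B,C,\alpha)$.
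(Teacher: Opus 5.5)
\textbf{There is a genuine gap, and it starts at your normalization.} You read $\Delta_{\theta_s}v\ge -a$ in the paper's positive-definite convention as $\operatorname{tr}_{\theta_s}(\ddc v)\le a$. That makes $v$ quasi-\emph{super}harmonic, and for such $v$ the asserted bound is false even on one fixed manifold. To see this, take smooth $\rho_k\ge 0$ with $\int_{X_s}\rho_k\,\theta_s^n=1$ concentrating at a point $x_0$. Let $v_k$ be the mean-zero solution of $\operatorname{tr}_{\theta_s}(\ddc v_k)=V_{\theta_s}^{-1}-\rho_k$. Each $v_k$ is smooth, hence quasi-subharmonic, and satisfies $\operatorname{tr}_{\theta_s}(\ddc v_k)\le V_{\theta_s}^{-1}$. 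Also $v_k\to G(x_0,\cdot)$ in $L^1$, so $\int_{X_s}\abs{v_k}\,\theta_s^n$ stays bounded while $\sup v_k\to+\infty$. In particular, your Step~1 goal (a volume lower bound for $\{v>M/2\}$) fails for $v_k$.

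The lemma is Guedj--T\^o's mean-value inequality, stated in their convention $\Delta_\omega=\operatorname{tr}_\omega\ddc$. The intended hypothesis, which is what ``quasi-subharmonic'' signals, is $\operatorname{tr}_{\theta_s}(\ddc v)\ge -a$, i.e.\ $v$ is $\frac{a}{n}\theta_s$-subharmonic. The displayed sign is a convention slip in the paper; under the literal reading there is no valid statement to prove, so the slip had to be corrected rather than built on.

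\textbf{The same sign problem defeats Step~3.} For $\Phi=-\epsilon(-\psi+\Lambda)^{n/(n+1)}-(v-M+\tau)$, the maximum principle with $\operatorname{tr}_{\theta_s}(\ddc v)\le 1$ only bounds $v$ at the maximum point of $\Phi$, where $v$ is comparatively small. That bound cannot propagate to an upper bound for $(v-M+\tau)_+$ on $U$; indeed $\Phi\le 0$ on $U$ holds trivially. The Guo--Phong--Sturm test function is instead $\Phi=(v-s)-\epsilon(-\psi+\Lambda)^{n/(n+1)}$ on $\{v>s\}$. A positive interior maximum is excluded exactly by $\operatorname{tr}_{\theta_s}(\ddc v)\ge -a$ combined with $\operatorname{tr}_{\theta_s}(\theta_s+\ddc\psi)\ge n(\text{density})^{1/n}$.

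Corrected this way, your De Giorgi route is a genuine alternative to the paper's. It needs only $\sup\psi=0$ and the Skoda bound (2), moved to $\theta_s^n$ by H\"older against $f_s\in L^p$. It does not need \Cref{thm:uniform-MA}, whose $L^{p'}$ hypothesis is not uniformly available for weights like $(v-s)_+^q/A$.

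Your alternative via \Cref{lem:comparing_laplace_and_MA} also points the wrong way. That lemma gives \emph{lower} bounds for solutions of Laplace-type equations, so to bound $\sup v$ you must apply it, through the maximum principle, to a supersolution built from $-v$. For instance, normalize $a=n$ and set $w=-\kappa\max(v,0)$. Then $(\theta_s+\ddc w)\wedge\theta_s^{n-1}\le e^{tw}f\,\theta_s^n$ with $f=(1+\kappa)e^{t\kappa\max(v,0)}$. Taking $t\kappa\sup v=1$ makes $f$ uniformly bounded. The auxiliary potential $\vphi$ satisfies $\vphi\le w$ by the maximum principle and \Cref{lem:comparing_laplace_and_MA}, and \Cref{thm:uniform-MA} bounds its oscillation. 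Integrating its Monge--Amp\`ere equation then gives $\sup v\lesssim 1+V_{\theta_s}^{-1}\int_{X_s}\abs{v}\,\theta_s^n$ once $t$ is fixed small. This is how the three ingredients the paper cites (Skoda, \Cref{thm:uniform-MA}, \Cref{lem:comparing_laplace_and_MA}) can be combined to give the lemma.
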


\begin{proof}
The proof of \cite{Guedj-To2025}*{Lemma~2.1} uses only the Skoda estimate in \Cref{dfn:skoda_class}(2), the uniform
$L^\infty$-estimate in \Cref{thm:uniform-MA}, and the comparison lemma
\Cref{lem:comparing_laplace_and_MA}. Since these ingredients are
available for $\theta \in \cK_{\rm Skoda}(X,p,B,C,\alpha,\delta)$, the
same argument applies.
\end{proof}

\begin{proposition}[\cite{Guedj-To2025}*{Proposition~2.2}]
\label[proposition]{prop:Laplacian-estimate-by-skoda}
Let $u$ be a continuous function on $X_s$ such that
\(
\int_{X_s} u\,\theta_s^n = 0\),
\(
\abs{\Delta_{\theta_s} u} \le 1.
\)
Then
\[
\norm{u}_{L^\infty(X_s)} \le L,
\]
where $L=L(n,p,B,C,\alpha)>0$ depends only on $n,p,B,C,\alpha$.
\end{proposition}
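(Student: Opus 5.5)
Split $u$ into its positive and negative parts and bound $\sup_{X_s} u$ and $\sup_{X_s}(-u)$ separately; by symmetry it is enough to treat $\sup u$. The hypothesis $\abs{\Delta_{\theta_s}u}\le 1$ gives $\Delta_{\theta_s}u \ge -1$. Hence $u$ is quasi-subharmonic in the sense of the preceding lemma (\cite{Guedj-To2025}*{Lemma~2.1}) with $a=1$. That lemma yields
\[
\sup_{X_s} u \le L\Big(1+\frac{1}{V_{\theta_s}}\int_{X_s}\abs{u}\,\theta_s^n\Big).
\]
The same applies to $-u$. So everything reduces to a uniform bound on the normalized $L^1$ norm $\norm{u}_{L^1(\theta_s^n/V_{\theta_s})}$.

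For the $L^1$ bound I would use the normalization $\int_{X_s}u\,\theta_s^n=0$ together with a Green-type argument that again goes through the preceding lemma. Write $u=u^+-u^-$. Since the mean of $u$ vanishes, $\int u^+ = \int u^-$, so $\int\abs{u} = 2\int u^+ \le 2\sup u$ when $\sup u\ge 0$, and likewise $\int\abs{u}\le 2\sup(-u)$. Inserting this directly gives only $\sup u \le L(1+2\sup u)$, which is circular. So we need a genuinely independent $L^1$ estimate.

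The approach I would take is duality with an auxiliary Poisson equation. Let $g=\mathrm{sgn}(u)-c$, where $c$ is chosen so that $\int g\,\theta_s^n=0$. Then $\abs{g}\le 2$. Solve $\Delta_{\theta_s}w=g$ with $\int w\,\theta_s^n = 0$. Integration by parts gives
\[
\int\abs{u}\,\theta_s^n = \int u\,g\,\theta_s^n = \int w\,\Delta_{\theta_s}u\,\theta_s^n \le \int\abs{w}\,\theta_s^n.
\]
This moves the problem onto $w$ but is still circular, so instead I would bound the $L^1$ norm directly, via Green's function estimates in the spirit of Guedj--T\^o.

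The cleanest route is a contradiction and normalization argument: suppose $M\coloneqq\norm{u}_{L^\infty}$ is large. Then $\hat u = u/M$ satisfies $\abs{\Delta\hat u}\le 1/M$, has mean zero, and has sup norm $1$. The preceding lemma applied to $\pm\hat u$ with $a=1/M$ gives
\[
1 \le L\Big(\tfrac1M + \norm{\hat u}_{L^1}\Big),
\]
so $\norm{\hat u}_{L^1}\ge 1/(2L)$ once $M\ge 2L$. The aim is to show this forces $M$ to be bounded.

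For the remaining estimate, use $\max(\hat u,0)\le 1$ and the mean-zero condition together with an $L^2$/energy bound. Since $\hat u$ is nearly harmonic, $\int\abs{\nabla\hat u}^2 = \int\hat u\,\Delta\hat u \le 1/M$. A uniform Poincaré-type inequality would then make $\norm{\hat u}_{L^1}$ small, which is a contradiction. However, a uniform Poincaré inequality is exactly the spectral gap we are trying to prove, so this step is the main obstacle.

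Following Guedj--T\^o, I would bypass it with a sublevel-set argument. Apply the preceding lemma to $v=-\hat u$, or use the auxiliary Monge--Amp\`ere comparison in \Cref{lem:comparing_laplace_and_MA} with densities $f$ proportional to $\mathbf 1_{\{u>0\}}$. The $L^\infty$ control from \Cref{thm:uniform-MA} then bounds the $\theta_s$-subharmonic potentials, and hence $\int\abs{u}$ by a constant depending only on $(n,p,B,C,\alpha)$. This is the step I expect to require the most care, specifically making the auxiliary density $L^{p'}$-bounded uniformly in $s$.
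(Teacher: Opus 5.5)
Your reduction is correct, and it is also where the cited argument starts. Applying Guedj--T\^o's Lemma~2.1 with $a=1$ to $u$ and to $-u$ reduces the proposition to a uniform bound on $V_{\theta_s}^{-1}\int_{X_s}|u|\,\theta_s^n$. But that bound is the whole content of the statement, and your proposal does not prove it; your last paragraph points at tools without giving an argument.

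Applying Lemma~2.1 to $-\hat u$ only returns $\sup(-\hat u)\le L(1/M+\|\hat u\|_{L^1})$, which you have already used. The comparison lemma gives $\varphi\le v$, where $v$ solves the twisted equation $(\theta_s+dd^cv)\wedge\theta_s^{n-1}=e^{tv}f\,\theta_s^n$. This is a lower bound on an auxiliary function, not on $u$, and you never say how it controls $\int|u|$. The only available link between $v$ and $u$ is the duality identity
\[
\int_{X_s}u\,e^{tv}f\,\theta_s^n=\int_{X_s}u\,(\theta_s+dd^cv)\wedge\theta_s^{n-1}=\frac1n\int_{X_s}(v-c)\,\operatorname{tr}_{\theta_s}(dd^cu)\,\theta_s^n,
\]
which holds for every constant $c$ because $\int_{X_s}u\,\theta_s^n=0$. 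To use it with $f\propto\mathbf 1_{\{u>0\}}$ you need two things:
\begin{enumerate}
\item a two-sided bound on $v-c$;
\item a definite contrast of the weight $e^{tv}f$ between $\{u>0\}$ and $\{u<0\}$.
\end{enumerate}
Neither is available. The comparison lemma controls $v$ only from below. With $f=0$ on $\{u\le 0\}$, the maximum principle gives no upper bound on $v$ at all, and the associated Monge--Amp\`ere density is not uniformly in $L^{p'}$ when $\{u>0\}$ is small. Moreover, the twisted equation says exactly $e^{tv}f=1+\frac1n\operatorname{tr}_{\theta_s}dd^cv$, so any contrast you build into $f$ turns into oscillation of $v$. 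Bounding $\operatorname{osc} v$ for a Laplace equation with bounded right-hand side is the statement you are proving. This is the same circularity you correctly spotted in your duality paragraph.

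This is not a technicality. As you use them, Lemma~2.1 and the comparison lemma (fed by the uniform Monge--Amp\`ere estimate) each control $u$ from one side only. At best they show that $u$ stays within $O(1)$ of its maximum, or of its minimum, on a set of definite measure.

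Consider a two-plateau profile: $u\approx D$ on one half of $X_s$ and $u\approx -D$ on the other, with $|\Delta u|\le 1$. It has mean zero and is compatible with $\sup|u|\le L\bigl(1+V_{\theta_s}^{-1}\|u\|_{L^1}\bigr)$ for every $D$. This is exactly the small-eigenvalue profile the paper studies; for instance, $\Phi_s/\lambda_1(s)$ looks like this for the unrescaled metrics $\omega_s$. Ruling it out uniformly is essentially a uniform spectral gap for $\theta_s$, and the paper deduces its lower bound on $\lambda_1$ from this very proposition.

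So a proof needs a global step in which the uniform Skoda/Monge--Amp\`ere estimate ties the region where $u$ is large to the region where it is small. The paper gets this step by invoking the argument of Guedj--T\^o, whose ingredients remain valid in the Skoda class. Your proposal names those ingredients but never carries out the step, so as written it proves only the reduction to the $L^1$ bound.
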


\begin{proof}
The proof of \cite{Guedj-To2025}*{Proposition~2.2} relies on the
previous lemma together with
\Cref{thm:uniform-MA} and \Cref{lem:comparing_laplace_and_MA}. Hence the same
argument applies in the present setting.
\end{proof}

\begin{remark}
\Cref{prop:Laplacian-estimate-by-skoda} is also proved by
Guo--Phong--Song--Sturm in \cite{Guo-Phong-Song-Sturm2024B}.
\end{remark}

\subsection{Proof of the lower bound of {small eigenvalues}}
\label[subsection]{sec:skoda_implies_spectral_gap}

We prove the lower bound of {small eigenvalues} in the following, which is part of our main theorem  
\Cref{thm:main-thm-lower-bound}.
\begin{theorem}
    \label[theorem]{thm:lower-bound-section1}
    Let \(\pi\colon (X, \omega_X) \to \D_s\) be a degeneration of \Kahler manifolds of complex dimension \(n\). Suppose
    $X_0$ is the unique singular fiber which may be 
    reducible and non-reduced. Let $\omega_s \coloneqq \omega_X\mid_{X_s}$ be the restriction of the background \Kahler form 
    $\omega_X$ on each regular fiber $X_s$ ($s \neq 0$), then
    there is a positive constant $C > 0$ such that
    \[
    C \abs{\log^{-1} \abs{s}} \leq \lambda_1(X_s, \Delta_{\omega_s}),
    \]
    for $0 < \abs{s} \ll 1$.
\end{theorem}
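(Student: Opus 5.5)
We will derive the spectral gap from the uniform $L^\infty$ bound for the Poisson equation in \Cref{prop:Laplacian-estimate-by-skoda}, applied to the rescaled forms $\theta_s = \tilde{\omega}_s = \omega_s/\abs{\log\abs{s}}$. By \Cref{prop:rescaled-in-the-class} (with $\beta = \omega_X$), $\tilde\omega \in \cK_{\rm Skoda}((X,\omega_X),p,1,C,\alpha,\delta)$. Therefore \Cref{prop:Laplacian-estimate-by-skoda} gives a constant $L$, independent of $s$, with the following property. If $u$ is continuous on $X_s$, $\int_{X_s} u\,\tilde\omega_s^n = 0$ and $\abs{\Delta_{\tilde\omega_s} u}\le 1$, then $\norm{u}_{L^\infty}\le L$.

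Since $\Delta_{\tilde\omega_s} = \abs{\log\abs{s}}\,\Delta_{\omega_s}$, the eigenvalues scale accordingly: $\lambda_1(\tilde\omega_s) = \abs{\log\abs{s}}\,\lambda_1(\omega_s)$. It therefore suffices to prove a uniform lower bound $\lambda_1(\tilde\omega_s)\ge 1/L$. Let $u$ be a first eigenfunction for $\tilde\omega_s$, so $\Delta_{\tilde\omega_s}u = \lambda u$ with $\lambda = \lambda_1(\tilde\omega_s)$. The function $u$ is smooth and satisfies $\int u\,\tilde\omega_s^n = 0$. Normalize $\norm{u}_{L^\infty}=1$. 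Then $\abs{\Delta_{\tilde\omega_s}(u/\lambda)} = \abs{u} \le 1$, and $u/\lambda$ still has zero mean. The proposition gives $\norm{u/\lambda}_{L^\infty}\le L$, hence $1/\lambda \le L$, that is $\lambda\ge 1/L$.

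Undoing the rescaling gives $\lambda_1(X_s,\Delta_{\omega_s}) \ge L^{-1}\abs{\log\abs{s}}^{-1}$ for $0<\abs{s}<\delta$. This is the claim with $C = 1/L$.

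The analytic work is contained in the earlier results, so the main point to check is the uniformity hypotheses. We must confirm that the normalized volume of $\omega_s$ is independent of $s$; this holds because the fibers are cohomologous in $X$. We must also confirm that $\tilde\omega$ satisfies both conditions of \Cref{dfn:skoda_class} with constants independent of $s$. This is exactly the content of \Cref{prop:rescaled-skoda-general-sing}, and it holds even when $X_0$ is reducible or non-reduced, via semistable reduction. Finally, the Guedj--T\^o argument behind \Cref{prop:Laplacian-estimate-by-skoda} uses only these class constants, so $L$ does not depend on $s$, and the proof is complete.
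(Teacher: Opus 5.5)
Your proof is correct and rests on the same key step as the paper. In both, \Cref{prop:Laplacian-estimate-by-skoda} is applied, for the rescaled forms $\omega_s/\abs{\log\abs{s}}$, to a first eigenfunction divided by its rescaled eigenvalue.

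The difference is the normalization, and it works in your favor. The paper normalizes the eigenfunction $\Phi_s$ in $L^2$. It therefore needs three extra inputs to get $\norm{\Phi_s}_{L^\infty}\le C'$, which is what makes the hypothesis $\abs{\Delta(\cdot)}\le 1$ hold: the uniform Sobolev inequality, Moser iteration \cref{eqn:eigen-infinity-norm}, and an a priori upper bound on $\lambda_1(s)$ obtained from continuity. At the end it converts back using $\norm{\Phi_s}_{L^\infty}\ge V^{-1/2}$. By normalizing $\norm{u}_{L^\infty}=1$ directly, you get the Laplacian bound for free and read off $1/\lambda\le L$ at once. So the Sobolev and Moser input is unnecessary for this theorem, and you get the explicit constant $C=1/L$ on all of $0<\abs{s}<\delta$.
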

\begin{proof}
    Let $\Phi_s$ be the normalized eigenfunction with the first non-zero 
    eigenvalue of the Laplacian on $(X_s, \omega_s)$, i.e., 
    \[
    \Delta_{\omega_s} \Phi_s = \lambda_1(X_s, \Delta_{\omega_s}) \Phi_s,
    \quad \norm{\Phi_s}_{L^2(X_s, \omega_s^n)} = 1.
    \]

    Since $\lambda_1(X_s, \Delta_{\omega_s})$ is continuous
    in $s \in \D$, we have $0 \leq \lambda_1(X_s, \Delta_{\omega_s})
    \leq \lambda$ for some $0 < \lambda < \infty$, $0 < \abs{s}
    \leq \frac{1}{2}$.

    Recall that $(X_s, \omega_s)$ satisfies a uniform Sobolev 
    inequality as it constitutes a family of minimal submanifolds (see \cite{Tosatti2010}*{Lemma~3.2}, \cite{DNGG2022}*{Proposition~3.8}). For all $s \neq 0$ and $u \in C^\infty(X_s)$, there 
    is a constant $C_{\rm Sob} > 0$ independent of $s$ and $u$ such that
    \begin{align}
    \label{eqn:uniform_sobolev}
    \begin{split}
    \left( \int_{X_s} |u|^{2\nu} \omega_s^n \right)^{\frac{1}{\nu}} &\leq C_{\rm Sob}^2 
    \left(\int_{X_s} |\nabla u|^2_{\omega_s} \omega_s^n +\int_{X_s} |u|^2 \omega_s^n\right),
    \\ 
    &
    \left(\nu = \frac{n}{n - 1} \quad \text{when }n\geq 2\right);\\
    \left( \int_{X_s} |u|^{4} \omega_s^n \right)^{\frac{1}{2}} &\leq C_{\rm Sob}^2 
    \left(\int_{X_s} |\nabla u|^2_{\omega_s} \omega_s^n + \int_{X_s} |u|^2 \omega_s^n\right), 
    \quad \text{when } n = 1.
    \end{split}
    \end{align}

    From a standard Moser iteration (see 
    \cite{Petersen2016}*{Theorem 9.2.7}), we have 
    \[
    V_{\omega_s}^{-1/2} = 
    V_{\omega_s}^{-1/2}\norm{\Phi_s}_{L^2(X_s, \omega_s^n)} \leq  \norm{\Phi_s}_{L^\infty(X_s)} \leq C'
    \]
    for a uniform constant $C' = C'(C_{\rm Sob}, n, \lambda) > 0$. Indeed, we have 
    \begin{equation}
        \label{eqn:eigen-infinity-norm}
        \norm{\Phi_s}_{L^\infty(X_s)}
        \leq 
        \exp\left(C_{\rm Sob}
        \frac{\sqrt{\nu \lambda}}{\sqrt{\nu} - 1}\right)
        \norm{\Phi_s}_{L^2(X_s, \omega_s^n)} .
    \end{equation}

    Set $\tilde{\beta}_s = \frac{\omega_s}{\abs{\log \abs{s}}}$. Then $\tilde{\beta} \in \cK_{\rm Skoda}((X, \omega_X), p, 1, 
    C_{\rm Skoda}, \alpha_{\rm Skoda}, \delta)$
    for any $p > 1$, for some $0 < \delta < 1$ and for positive 
    constants $C_{\rm Skoda}, \alpha_{\rm Skoda}$ in the uniform Skoda inequality by \Cref{prop:rescaled-in-the-class}.
    
    Note that 
    \[
    \Delta_{\tilde{\beta}_s} \Phi_s = \abs{\log \abs{s}} \Delta_{\omega_s} \Phi_s
    = \left(\abs{\log \abs{s}} \cdot \lambda_1(X_s, \Delta_{\omega_s})\right) \Phi_s,
    \]
    and
    \[
    \abs{\Delta_{\tilde{\beta}_s} \frac{\Phi_s}{C'\abs{\log \abs{s}}
    \cdot \lambda_1(X_s, \Delta_{\omega_s}) }}
    = \abs{\frac{\Phi_s}{C'}} \leq 1.
    \]

    Applying \Cref{prop:Laplacian-estimate-by-skoda}, note that 
    $\int_{X_s} \Phi_s \tilde{\beta}_s^n = \abs{\log^{-n}\abs{s}} \int_{X_s} \Phi_s \omega_s^n = 0$,
    we have 
    \[
    \norm{\frac{\Phi_s}{C'\abs{\log \abs{s}}
    \cdot \lambda_1(X_s, \Delta_{\omega_s}) }}_{L^\infty(X_s)}
    \leq C''
    \]
    for $0 < \abs{s} < \delta$ and a uniform $C'' > 0$. 
    
    Therefore, we obtain 
    \[
    \lambda_1(X_s, \Delta_{\omega_s})
    \geq \norm{\frac{\Phi_s}{C' C'' \abs{\log \abs{s}}}}_{L^\infty(X_s)}
    \geq C \abs{\log^{-1} \abs{s}}
    \]
    for a uniform $C = (C' C'' V_{\omega_s}^{1/2})^{-1} > 0$.
\end{proof}

\section{Upper bound}
\label[section]{sec:upper-bound-section}

The main theorem of this section is the following.
\begin{theorem}
    \label{thm:stable-degeneration-upper-bound}
    Let $\pi \colon \mathcal{Y} \to \D_s$ be a degeneration 
    of compact \Kahler manifolds of complex dimension $n$. Suppose $\cY_0$ is the unique
    singular fiber and $\cY_0$ is a reduced divisor with
    simple normal crossings, i.e., the degeneration is 
    semistable. 

    Let $\beta$ be a closed smooth semi-positive
    form on $\cY$ such that its restriction on each regular 
    fiber $\cY_s\eqqcolon \pi^{-1}(s)$ $(s \neq 0)$ is \Kahler. Let 
    \[
    N(\beta, \cY) \coloneqq \# \left\{ D \text{ an irreducible component of } \cY_0 \;\middle|\; \int_D \beta^n > 0 \right\}.
    \]
    We write $N \coloneqq N(\beta, \cY)$ and 
    assume $N \geq 2$.

    Then for $0 < \lambda_1(s) \leq \cdots \leq \lambda_{N-1}(s)$
    the first $N - 1$ non-zero eigenvalues of the Laplacian of $(\cY_s, \beta_s\eqqcolon \beta\mid_{\cY_s})$, there
    exists a constant $C > 0$ such that 
    \[
    \lambda_1(s) \leq \cdots \leq \lambda_{N-1}(s) 
    \leq \frac{C}{\log\abs{s}^{-1}}
    \]
    for $0 < \abs{s} \leq \frac{1}{2}$.
\end{theorem}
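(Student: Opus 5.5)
By the min-max principle it suffices to construct $N$ test functions $u_1,\dots,u_N$ on $\cY_s$ with $L^2$-norms bounded below, pairwise disjoint supports, and Dirichlet energies
\[
\int_{\cY_s}\abs{\di u_\alpha}^2_{\beta_s}\,\beta_s^n \lesssim \abs{\log\abs{s}}^{-1}.
\]
Disjoint supports make these functions orthogonal in $L^2$ with orthogonal gradients. Hence on their $N$-dimensional span the Rayleigh quotient is bounded by $C/\log\abs{s}^{-1}$. This gives $\lambda_{N-1}(s)\le C/\log\abs{s}^{-1}$, because the span has dimension $N$ and the constants account for $\lambda_0=0$.

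\textbf{Step 1: construct the test functions.} Let $D_1,\dots,D_N$ be the components of $\cY_0$ with $\int_{D_\alpha}\beta^n>0$. The construction uses log cut-offs, following Dai--Yoshikawa \cite{Dai-Yoshikawa2025}. In an $E_J$-adapted chart where $s=z_0\cdots z_p$, the function $\log\abs{z_j}/\log\abs{s}$ takes values in $[0,1]$ on $\cY_s$. Its value is close to $0$ away from $E_j$ and close to $1$ deep in the neck near $E_j$.

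Fix a smooth cut-off $\chi\colon\R\to[0,1]$ with $\chi=1$ on $(-\infty,1/3]$ and $\chi=0$ on $[2/3,\infty)$. For each $\alpha$, set
\[
u_\alpha \coloneqq \prod_{i\neq\alpha}\chi\!\left(\frac{\log\abs{\sigma_i}_h}{\log\abs{s}}\right),
\]
where $\sigma_i$ are global defining sections of the components $E_i$ and $\abs{\cdot}_h$ are smooth hermitian metrics. The product runs over all components $E_i\neq D_\alpha$. Then $u_\alpha$ is supported near $D_\alpha$, on the region where $\log\abs{\sigma_i}/\log\abs{s}\le 2/3$ for all $i\ne\alpha$.

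\textbf{Step 2: check the supports are disjoint.} Suppose $u_\alpha$ and $u_{\alpha'}$ were both nonzero at a point. In a chart we have $\sum_{j}\log\abs{z_j}=\log\abs{s}$, up to bounded errors from the metrics $h$, which are $O(1/\abs{\log\abs{s}})$ after normalization. At a common point, both $\log\abs{z_\alpha}$ and the other logs would need to be at most $2/3$ of $\log\abs{s}$ in ratio. Compatibility with the sum identity then needs some care.

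A cleaner choice is to make the functions a partition-like family. I would take instead
\[
u_\alpha = \chi_\alpha\!\left(\frac{\log\abs{\sigma_\alpha}}{\log\abs{s}}\right),
\]
with $\chi_\alpha$ vanishing for ratio at least $1/(n+2)$. At any point $\sum_j \log\abs{z_j}/\log\abs{s}=1+o(1)$, and there are at most $n+1$ terms. So the set where the ratio for $\alpha$ is at most $1/(n+2)$ sees $\sigma_\alpha$ only weakly vanishing. To guarantee disjointness I would support $u_\alpha$ where $\log\abs{\sigma_i}/\log\abs{s}\ge 1-\epsilon$ is \emph{excluded} for $i=\alpha$, and adjust so two different components cannot both be in their cores. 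The precise bookkeeping follows \cite{Dai-Yoshikawa2025}*{Section~6}.

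\textbf{Step 3: $L^2$ lower bound.} Each $u_\alpha\equiv1$ on a fixed compact region of $D_\alpha$ minus a neighbourhood of the other components, and $\cY_s$ converges smoothly to $\cY_0$ there. Hence $\int u_\alpha^2\beta_s^n\ge c\int_{D_\alpha}\beta^n/2>0$. This is exactly where the hypothesis $\int_{D_\alpha}\beta^n>0$ enters.

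\textbf{Step 4: energy estimate (the main obstacle).} The gradient of $u_\alpha$ is supported in the necks. There $\abs{\di u_\alpha}^2\lesssim \abs{\log\abs{s}}^{-2}\sum_j\abs{\di\log z_j}^2_{\beta_s}$. Using $\beta\lesssim$ Euclidean, we have $\abs{\di\log z_j}^2_{\beta_s}\,\beta_s^n\lesssim \abs{z_j}^{-2}\,dV$. In the log-measure description \eqref{eqn:relation-volume-log-measure} this becomes $\lesssim \abs{s}^2\sum_k\abs{z_k}^{-2}\abs{z_j}^{-2}\,d\nu_{\log}$. I expect this to integrate to $O(\abs{\log\abs{s}})$ over the neck: the log-volume of the neck is of order $\abs{\log\abs{s}}$ in each annular direction, and the $\abs{s}^2\abs{z_k}^{-2}\abs{z_j}^{-2}$ weight is bounded by $\abs{z_j}^{-2}$ times products of the remaining $\abs{z_i}^2$.

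Combining, the energy is $\lesssim\abs{\log\abs{s}}^{-2}\cdot\abs{\log\abs{s}}=\abs{\log\abs{s}}^{-1}$. Verifying this integral uniformly near deeper strata ($p\ge2$) is the delicate part. I would handle it chart by chart with the bounded-overlap covering used in the proof of \Cref{prop:rescaled-skoda-kahler-snc}.
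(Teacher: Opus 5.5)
Your route is different from the paper's. The paper builds logarithmic cut-offs on $\mathcal{Y}_0$, in the distance to $\mathrm{Sing}(\mathcal{Y}_0)$ between $\epsilon$ and $\sqrt{\epsilon}$. It then transports them to $\mathcal{Y}_s$ with the quantitative retraction $F_s$, so disjointness is automatic and norms and energies change only by $O(|s|^{1/2})$. You instead write cut-offs directly on $\mathcal{Y}_s$ in the ratios $r_i:=\log|\sigma_i|_h/\log|s|$, which would avoid the flow entirely. As written, however, you never produce an admissible family. Put $L:=\log|s|^{-1}$. In a chart meeting exactly the components indexed by $J$ (so $|J|\le n+1$), one has $\sum_{i\in J}r_i=1+O(L^{-1})$, and $r_i=O(L^{-1})$ for $i\notin J$. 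With the window $[1/3,2/3]$, the functions $u_0=\chi(r_1)$ and $u_1=\chi(r_0)$ of a $p=1$ chart overlap near $|z_0|\approx|z_1|\approx|s|^{1/2}$ as soon as $\chi(1/2)>0$. Your second choice $\chi_\alpha(r_\alpha)$, vanishing for $r_\alpha\ge 1/(n+2)$, lives where $|\sigma_\alpha|_h\gtrsim|s|^{1/(n+2)}$. That is away from $D_\alpha$, since over $D_\alpha\setminus\mathrm{Sing}(\mathcal{Y}_0)$ one has $r_\alpha\approx 1$, so Step 3 fails for it. Dai--Yoshikawa's Section~6 cannot supply the missing bookkeeping either, because it proceeds via the flow, exactly as this paper does.

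The repair combines your two ideas. Keep $u_\alpha=\prod_{i\neq\alpha}\chi(r_i)$, but take $\chi=1$ on $(-\infty,\frac{1}{2(n+2)}]$ and $\chi=0$ on $[\frac{1}{n+2},\infty)$. At a common point of $\mathrm{supp}\,u_\alpha$ and $\mathrm{supp}\,u_{\alpha'}$, every $i\in J$ differs from $\alpha$ or from $\alpha'$. Hence $\sum_{i\in J}r_i\le (n+1)/(n+2)$, which is impossible for small $|s|$. Also $u_\alpha\equiv 1$ over compact subsets of $D_\alpha\setminus\mathrm{Sing}(\mathcal{Y}_0)$, where $r_i=O(L^{-1})$ for $i\neq\alpha$, so Step 3 holds.

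Step 4 is only an expectation, and the heuristic you give fails on part of the neck. Write $\rho_i:=-\log|z_i|$. Your weight $|s|^2|z_j|^{-2}\sum_{k\in J}|z_k|^{-2}$ contains the term $k=j$, namely $|s|^2|z_j|^{-4}=e^{4\rho_j-2L}\asymp|s|^{2-4r_j}$. This is a negative power of $|s|$ wherever $r_j>1/2$. That is exactly where the window $[1/3,2/3]$ places the gradient, and also where any disjointly supported cut-off $\psi(r_\alpha)$ near $D_\alpha$ must place it; there your bound does not integrate to $O(L)$. The $O(L)$ also does not come from the log-volume of the neck, which is of order $L^p$ near a $p$-fold stratum. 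It comes from the terms $k\neq j$, namely $\prod_{i\in J\setminus\{j,k\}}|z_i|^2$, which decay exponentially in every log-direction except $\rho_j$. Integrating them in the coordinates $(\rho_i)_{i\in J\setminus\{k\}}$ over a $\rho_j$-interval of length $O(L)$ gives $O(L)$.

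With the window above, $r_j\le 1/(n+2)\le 1/3$ on the gradient support. So the $k=j$ term is $O(|s|^{2/3})$ pointwise and contributes $O(L^n|s|^{2/3})$, and the energy is $O(L^{-2}\cdot L)$ as required. Alternatively, the relation $\sum_{i\in J}d\log z_i=0$ on $\mathcal{Y}_s$ gives, on $\mathcal{Y}_s$, $\frac{\sqrt{-1}\,dz_j\wedge d\bar z_j}{|z_j|^2}\wedge\omega_{\mathrm{Euc}}^{n-1}=(n-1)!\sum_{k\in J\setminus\{j\}}\prod_{i\in J\setminus\{j,k\}}|z_i|^2\,d\nu_{\log}$. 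So the bad term is an artifact of bounding $d\log z_j$ by its ambient norm, and with this identity even a single-factor cut-off works.

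With these two repairs your argument becomes a valid, flow-free proof. Read on $\mathcal{Y}_s$, the paper's transported cut-offs are essentially your corrected $u_\alpha$, with transition window $r_i\in[\frac{1}{8\nu},\frac{1}{4\nu}]\subset[0,\frac{1}{n+2}]$.
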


In the first subsection, we show that \Cref{thm:stable-degeneration-upper-bound} implies the upper bound 
of {small eigenvalues} of general degenerations in \Cref{thm:main-thm-upper-bound}. 

In the remaining part of this section,
we prove \Cref{thm:stable-degeneration-upper-bound} following
the ideas in \cite{Yoshikawa1997} and 
\cite{Dai-Yoshikawa2025}*{Section~6}. 
The method is to construct suitable test functions on $\cY_s$ and apply the min-max 
principle to get the upper bound.  

\subsection{Proof of the upper bound of {small eigenvalues}}
Assuming \Cref{thm:stable-degeneration-upper-bound}, we prove the upper 
bound of {small eigenvalues} in \Cref{thm:main-thm-upper-bound}.

We recall the construction of $Z$ in \Cref{thm:Yoshikawa_continuity}. Note that the number of irreducible
components of $Z$ minus $1$ is equal to the number of small eigenvalues.

Let \(\pi\colon (X, \omega_X) \to \D_s\) be a degeneration of \Kahler
manifolds of complex dimension \(n\). Write the central fiber as
\(
X_0 = \sum_{\alpha=1}^a m_\alpha D_\alpha,
\)
where the \(D_\alpha\) are its irreducible components, and let
\(
m = \prod_{\alpha=1}^a m_\alpha.
\)

Consider the base change and normalization given by the following
commutative diagram:
\begin{equation}
    \label{diagram:diagram-Z}
    \begin{tikzcd}
    \widehat{F^{-1}X} \arrow[r, "\iota"] \arrow[dr, bend right=20, "\widehat{\Pi}"']
    & F^{-1}X \coloneqq X \times_{\D_s} \D_t \arrow[r, "F"] \arrow[d, "\Pi"]
    & X \arrow[d, "\pi"] \\
    & \D_t \arrow[r, "t \mapsto t^m"']
    & \D_s
\end{tikzcd}
\end{equation}
We define the reduced divisor
\(
Z \coloneqq \widehat{\Pi}^{-1}(0)
\)
as the central fiber of the normalized space.

We prove a lemma which calculates the number of irreducible
components of \(Z\) by passing to a semistable reduction of
\(\pi\colon (X,X_0)\to (\D,0)\).

\begin{lemma}
    \label[lemma]{lem:counting-irrd-cpnt-of-Z}
    For any semistable reduction \(p \colon \cY \to \D_t\) of \(\pi\)
    over a finite base change, fitting into the commutative diagram
    \[
    \begin{tikzcd}
        & \cY  \arrow[r, "\mu"] \arrow[d, "p"] & X \arrow[d, "\pi"] \\
        & \D_t \arrow[r, "f_d"] & \D_s
    \end{tikzcd}
    \]
    where \(f_d(t)=t^d=s\), and the induced map
    \((\mu,p)\colon \cY\to X\times_{\D_s}\D_t\) is an isomorphism over
    \(\D_t^\circ\), we define the following invariant of the
    degeneration:
    \begin{align*}
        N(X)
        &\coloneqq N(\mu^*\omega_X \eqqcolon \beta_\cY,\cY) \\
        &= \#\left\{
        D \subseteq \cY_0
        \;\middle|\;
        D \text{ is an irreducible component and }
        \int_D \beta_\cY^n > 0
        \right\}.
    \end{align*}

    Then
    \begin{itemize}
        \item \(N(X)\) is independent of the choice of semistable
        reduction,
        \item \(N(X)\) is equal to the number of irreducible components
        of \(Z\), which is denoted by \(N\).
    \end{itemize}
\end{lemma}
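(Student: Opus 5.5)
The plan is to compare the semistable model \(\cY\) with the normalized base change \(\widehat{F^{-1}X}\) via a common model, and to identify the components \(D\subseteq\cY_0\) with \(\int_D\beta_\cY^n>0\) as exactly the strict transforms of the components of \(Z\).

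\textbf{Reduction to \(m\mid d\).} First reduce to the case where \(m\) divides \(d\). Given any semistable reduction over \(t\mapsto t^d\), a further base change \(t=u^k\) followed by normalization and a toroidal resolution produces another semistable model \(\cY'\to\cY\). This model is \(\cY\times_{\D_t}\D_u\), normalized and resolved. Locally \(\cY\) is \(z_0\cdots z_p=t\), and the normalization of \(z_0\cdots z_p=u^k\) is toric. Every component of \(\cY'_0\) then maps either onto a component of \(\cY_0\), with degree \(k\), or to a lower-dimensional stratum; the latter are the exceptional components.

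\textbf{Degree behavior under pullback.} A component mapping to a lower-dimensional set has \(\int\beta^n=0\), because \(\beta_\cY\) is a pullback of a form from \(X\) and the image has dimension \(<n\). A component mapping onto \(D\) with degree \(k\) satisfies \(\int_{D'}\beta_{\cY'}^n=k\int_D\beta_\cY^n\). Hence \(N\) is unchanged under such refinements.

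\textbf{Independence of the model.} Two semistable reductions become dominated by a common one after passing to a common base change and resolving the graph of the birational map \(\cY\dashrightarrow\cY'\) over \(\D^\circ\). To keep this common model semistable, use a further base change and KKMS once more. The positive-volume components of each model are then in bijection with those of the common model. In every case, a component has positive volume iff its image in \(X\) is \(n\)-dimensional, i.e.\ iff it maps onto some \(D_\alpha\) (generically finite, since \(\omega_X\) is Kähler on \(X\)). This yields the first bullet.

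\textbf{Identification with \(Z\).} For the second bullet, take \(d\) divisible by \(m\). Then \(\cY\to X\times_{\D_s}\D_t\) factors through the normalized base change \(\widehat{F^{-1}X}\times_{\D_{t'}}\D_t\), where \(t'^m=s\) and \(t^{d/m}=t'\). Since \(\widehat{F^{-1}X}\) is normal and \(Z\) is reduced (as the paper states), its further base change \(\D_t\to\D_{t'}\) is étale-locally unramified along the generic points of \(Z\). More concretely, near a generic point of a component \(Z_j\) the family \(\widehat{F^{-1}X}\to\D_{t'}\) is smooth, since \(Z\) is reduced there. So after base change and normalization, each \(Z_j\) contributes exactly one component, with no splitting, because \(Z_j\times\) the base change stays irreducible near a smooth point of a reduced fiber. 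Its strict transform in \(\cY\) is therefore the unique component of \(\cY_0\) dominating \(Z_j\).

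Components of \(\cY_0\) not of this form map to proper subvarieties of \(Z\). Hence they map to sets of dimension \(<n\) in \(X\) (note \(F\circ\iota\) is finite), and so have zero \(\beta\)-volume. Conversely, \(\int_{\tilde Z_j}\beta^n=\deg\cdot\int_{Z_j}(F\circ\iota)^*\omega_X^n>0\), because \(F\circ\iota\) is finite onto \(D_\alpha\). This gives \(N(X)=N\).

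\textbf{Main obstacle.} The hardest step is the local claim that \(Z\) reduced forces the components of \(Z\) to neither split nor merge under further base change and resolution. I would verify it at a general point of \(Z_j\), where \(\widehat{\Pi}\) is a submersion (reduced fiber on a normal surface-like transversal slice) and base change of a smooth family is smooth.
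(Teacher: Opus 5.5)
Your proposal is correct and follows essentially the same route as the paper: pass to a common base change and a common model dominating both reductions, note that components whose image in $X$ has dimension $<n$ have zero $\beta$-volume while strict transforms keep positive volume, and for $Z$ use that normality plus reducedness makes $\widehat{\Pi}$ a submersion at generic points of each $Z_j$ (so nothing splits), together with finiteness of $F\circ\iota$. One harmless slip: after a further base change $t=u^k$ of a semistable model, the component $D'$ over $D$ maps to $D$ with degree $1$, not $k$. The factor $k$ appears as ramification, $\psi^*D=kD'$, so strict transforms keep the same volume, as the paper states. This does not affect the count.
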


\begin{proof}
We first show the birational invariance. Let
\(\mu_i\colon \cY_i\to X\), \(i=1,2\), be two
semistable reductions over the same base change \(s=t^d\), and set
\(\beta_i=\mu_i^*\omega_X\). Let \(p_i\colon \cY_i\to\D_t\) be the structure maps.

Since the induced maps \((\mu_i,p_i)\colon \cY_i\to X\times_{\D_s}\D_t\)
are isomorphisms over \(\D_t^\circ\), the varieties \(\cY_1\) and
\(\cY_2\) are birational over \(X\).
We can
choose a common log resolution of pairs $(\cY_1, (\cY_1)_0)$ and $(\cY_2,(\cY_2)_0)$ over \(X\), namely, the following diagram:
\[
\begin{tikzcd}
& \cW \arrow[dl, "\phi_1"'] \arrow[dr, "\phi_2"] & \\
\cY_1 && \cY_2 .
\end{tikzcd}
\]
Then \(\phi_1^*\beta_1=\phi_2^*\beta_2\). For an irreducible component
\(G\subset \cW_0\), either \(G\) is exceptional over \(\cY_i\), in which
case \(\dim \phi_i(G)<n\) and the projection formula gives
\[
\int_G \phi_i^*\beta_i^n=0,
\]
or \(G\) is the strict transform of a unique irreducible component
\(D\subset (\cY_i)_0\), in which case
\[
\int_G \phi_i^*\beta_i^n=\int_D \beta_i^n .
\]
Thus the components with positive \(\beta\)-volume are preserved under
passing to a common log resolution. Hence
\[
N(\beta_1,\cY_1)=N(\beta_2,\cY_2)
\]
for semistable reductions over the same base change.

The same argument shows invariance under further ramified base change.
Indeed, let \(p\colon\cY\to\D_t\) be semistable and pull it back by
\(u\mapsto u^q\). If \(p'\colon\cY'\to\D_u\) is a semistable reduction
of \(\cY\times_{\D_t}\D_u\), and \(\psi\colon\cY'\to\cY\) is the induced map,
then \(\beta_{\cY'}=\psi^*\beta_\cY\). Every component of \((\cY')_0\) is
either the strict transform of a component of \(\cY_0\), or is exceptional
over an intersection stratum of \(\cY_0\). The exceptional components have
zero \(\beta\)-volume by the projection formula, while strict transforms
have the same volume as the original components. Therefore
\[
N(\beta_{\cY'},\cY')=N(\beta_\cY,\cY).
\]
Given two arbitrary semistable reductions of degrees \(d_1\) and \(d_2\),
we pass both to the common base change of degree
\(\ell=\operatorname{lcm}(d_1,d_2)\). The preceding paragraph and the
same-base-change invariance then imply that \(N(X)\) is independent of
the semistable reduction.

It remains to identify this number with the number of irreducible
components of \(Z\). Write
\[
Z=\widehat{\Pi}^{-1}(0)=\bigcup_{j=1}^N Z_j .
\]
After a further base change \(u\mapsto t=u^q\), take a semistable
reduction
\[
\nu\colon \cY\to \widehat{F^{-1}X}\times_{\D_t}\D_u .
\]
Let \(r\colon\cY\to\widehat{F^{-1}X}\) be the induced map and
\(h \coloneqq F \circ \iota \colon\widehat{F^{-1}X}\to X\) be the composition of maps appearing in \cref{diagram:diagram-Z}. Taking \(\mu=h\circ r\), we have
\[
\beta_\cY=\mu^*\omega_X=r^*h^*\omega_X,
\]
and we compute $N(X)$ using this semistable model,
\[
N(X)=N(\beta_\cY,\cY).
\]

Since \(\widehat{F^{-1}X}\) is normal and \(Z\) is reduced, the pullback
family is smooth at the generic point of each \(Z_j\). Hence the
irreducible components of \(\cY_0\) are precisely the strict transforms
\(\widetilde Z_j\), together with \(\nu\)-exceptional divisors. The latter
map to subsets of dimension \(<n\), so they have zero \(\beta_\cY\)-volume.
Since \(h\) is finite over the central fiber, for each \(j\), the map
\[
h_j:=h|_{Z_j}\colon Z_j\to X_0
\]
has image an irreducible component \(D_{\alpha(j)}\subset X_0\) and is
generically finite of degree \(e_j\ge 1\). Therefore
\[
\int_{\widetilde Z_j}\beta_\cY^n
=
e_j\int_{D_{\alpha(j)}}\omega_X^n
>0.
\]
Thus the components of \(\cY_0\) with positive \(\beta_\cY\)-volume are
exactly \(\widetilde Z_1,\dots,\widetilde Z_N\). Consequently
\[
N(X)=N(\beta_\cY,\cY)=N,
\]
as claimed.
\end{proof}

Then we can prove the upper bound of the main theorem.
\begin{theorem}
    \label[theorem]{thm:upper-bound-general-in-section2}
    Let \(\pi\colon (X, \omega_X) \to \D_s\) be a degeneration of \Kahler manifolds of complex dimension \(n\). Suppose
    $X_0$ is the unique singular fiber which may be 
    reducible and non-reduced. Let $N$ be the number 
    of irreducible components of $Z$, which is also 
    $N(X)$ by \Cref{lem:counting-irrd-cpnt-of-Z}. Then we have 
    $N - 1$ small eigenvalues $0 < \lambda_1(s) \leq \cdots \leq \lambda_{N-1}(s)$ of the Laplacian on $(X_s, \omega_X\mid_{X_s})$.

    If $N \geq 2$, then there is a uniform constant
    $C > 0$ such that 
    \[
    \lambda_1(s) \leq \cdots \leq \lambda_{N-1}(s)
    \leq C \abs{\log^{-1} \abs{s}}
    \]
    for $0 < \abs{s} \ll 1$.
\end{theorem}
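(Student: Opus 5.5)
The plan is to reduce \Cref{thm:upper-bound-general-in-section2} to \Cref{thm:stable-degeneration-upper-bound} by passing to a semistable reduction, and to use \Cref{lem:counting-irrd-cpnt-of-Z} to match the two counts of components.

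First, fix a semistable reduction $\mu\colon \cY \to X$ over the base change $f_d(t) = t^d = s$, as in \Cref{lem:counting-irrd-cpnt-of-Z}. Set $\beta \coloneqq \mu^\ast\omega_X$, which is a closed smooth semi-positive form on $\cY$. Since $(\mu,p)$ is an isomorphism over $\D_t^\circ$, for each $t \neq 0$ the map $\mu$ restricts to a biholomorphism $\cY_t \xrightarrow{\sim} X_{t^d}$ with $\mu^\ast\omega_{t^d} = \beta_t$. In particular $\beta_t$ is \Kahler for $t\neq 0$, and $(\cY_t,\beta_t)$ is isometric to $(X_s,\omega_s)$ for $s = t^d$. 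Consequently the Laplace spectra coincide: $\lambda_k(\cY_t,\beta_t) = \lambda_k(X_s,\omega_s)$ for all $k$.

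Second, by \Cref{lem:counting-irrd-cpnt-of-Z}, $N(\beta,\cY) = N(X) = N$, the number of irreducible components of $Z$. The hypothesis $N\ge 2$ therefore puts us exactly in the setting of \Cref{thm:stable-degeneration-upper-bound}. That theorem gives a constant $C_0>0$ such that
\[
\lambda_{N-1}(\cY_t,\beta_t) \le \frac{C_0}{\log\abs{t}^{-1}}, \qquad 0<\abs{t}\le \tfrac12 .
\]
Since $\log\abs{t}^{-1} = d^{-1}\log\abs{s}^{-1}$, this becomes $\lambda_{N-1}(s) \le C_0 d\,\abs{\log^{-1}\abs{s}}$ for $0<\abs{s}\le 2^{-d}$. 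Because every $s$ in this range has at least one preimage $t$, the bound holds for all such $s$. The ordering $\lambda_1 \le \dots \le \lambda_{N-1}$ then gives the full chain. This also confirms that these $N-1$ eigenvalues are indeed the small eigenvalues, consistent with \Cref{thm:Yoshikawa_continuity}.

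This reduction is essentially formal. The only point needing care is that the isometry between fibers requires the restriction of $\mu$ to $\cY_t$ to be biholomorphic onto $X_{t^d}$. That holds because semistable reduction modifies only the central fiber. The real work sits inside \Cref{thm:stable-degeneration-upper-bound}, via test functions in the style of Dai--Yoshikawa. There one takes $N$ functions, each equal to $1$ near one positive-volume component and interpolating logarithmically, like $\log\abs{z_j}/\log\abs{s}$, across the neck regions. These have Dirichlet energy $O(1/\abs{\log\abs{s}})$ while their $L^2$ masses stay bounded below, and min--max gives the bound. The main obstacle is controlling the energy near the triple and higher strata, and near components of zero $\beta$-volume.
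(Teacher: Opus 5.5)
Your proposal is correct and matches the paper's proof: you pass to a semistable reduction with $\beta=\mu^\ast\omega_X$ and use the fiberwise isometry $(\cY_t,\beta_t)\cong(X_{t^d},\omega_{t^d})$. You then identify $N(\beta,\cY)=N$ via \Cref{lem:counting-irrd-cpnt-of-Z}, apply \Cref{thm:stable-degeneration-upper-bound}, and rescale $\log\abs{t}^{-1}=d^{-1}\log\abs{s}^{-1}$ to get the bound with constant $dC_0$ on $0<\abs{s}\le 2^{-d}$. Your closing sketch of the test functions differs somewhat from the paper, which builds logarithmic cutoffs between radii $\epsilon$ and $\sqrt{\epsilon}$ on the central fiber and transports them to $\cY_s$ by the quantitative retraction. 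That sketch concerns the cited theorem, though, not this reduction.
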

\begin{proof}
    We use the semistable diagram in \Cref{lem:counting-irrd-cpnt-of-Z},
    \[
    \begin{tikzcd}
        & \cY  \arrow[r, "\mu"] \arrow[d, "p"] & X \arrow[d, "\pi"] \\
        & \D_t \arrow[r, "f_d"] & \D_s
    \end{tikzcd}
    \]
    Set $\beta \coloneqq \mu^* \omega_X$. Then $(\cY_t, \beta_t)$ is isometric to $(X_{f_d(t)}, \omega_{f_d(t)})$
    for $t \neq 0$.
    
    By \Cref{thm:stable-degeneration-upper-bound}, we have 
    \[
    0 < \lambda_1(\cY_t, \Delta_{\beta_t}) \leq \cdots 
    \leq \lambda_{N(\beta,\cY)-1}(\cY_t, \Delta_{\beta_t})
    \leq \frac{C_1}{\log\abs{t}^{-1}}
    \]
    for some $C_1 > 0$ and $\abs{t} \leq \frac{1}{2}$. 
    
    By \Cref{lem:counting-irrd-cpnt-of-Z}, we have 
    $N(\beta,\cY) = N(X) = N$. So 
    \[
    0 < \lambda_1(X_{f_d(t)}, \Delta_{\omega_{f_d(t)}}) \leq \cdots 
    \leq \lambda_{N-1}(X_{f_d(t)}, \Delta_{\omega_{f_d(t)}})
    \leq \frac{C_1}{\log\abs{t}^{-1}}.
    \]

    Since \(s= f_d(t) = t^d\), we have
    \[
    \log\abs{t}^{-1}=\frac{1}{d}\log\abs{s}^{-1}.
    \]
    Hence, after replacing \(C_1\) by \(C=dC_1\),
    \[
    \lambda_1(s) \leq \cdots \leq \lambda_{N-1}(s)
    \leq \frac{C}{\log\abs{s}^{-1}}
    \]
    for \(0<\abs{s} \leq \frac{1}{2^d}\).
\end{proof}

\subsection{A quantitative retraction à la Dai--Yoshikawa}

We work under the geometric assumptions of \Cref{thm:stable-degeneration-upper-bound}. Thus
\(\pi \colon \mathcal{Y} \to \mathbb{D}\) is a semistable degeneration of compact K\"ahler manifolds of
complex dimension \(n\), the central fiber \(\mathcal{Y}_{0}\) is a reduced simple normal crossings divisor,
and \(\gamma\) is a fixed K\"ahler metric on \(\mathcal{Y}\). We write
\[
\mathcal{Y}_{0} = \sum_{i=1}^{a} D_{i},
\qquad
\Sing(\mathcal{Y}_{0}) = \bigcup_{1 \le i < j \le a} (D_{i} \cap D_{j}),
\]
and
\[
\mathcal{Y}_{0}^{\mathrm{reg}} := \mathcal{Y}_{0} \setminus \Sing(\mathcal{Y}_{0}),
\]
and we denote by \(d_{\gamma}\) the distance induced by \(\gamma\). For a subset \(A \subset \mathcal{Y}\) and
\(r > 0\), we set
\[
B_{\gamma}(A,r) := \{ y \in \mathcal{Y} \mid d_{\gamma}(y,A) < r \},
\quad
\overline{B}_{\gamma}(A,r) := \{ y \in \mathcal{Y} \mid d_{\gamma}(y,A) \leq r \}
.
\]

\begin{proposition}[Quantitative retraction]
\label[proposition]{prop:quantitative-retraction}
Fix an integer \(\nu \ge n\), and set
$\epsilon(s) := 2 |s|^{\frac{1}{4\nu}}$.
Then, after shrinking \(\mathbb{D}\) if necessary, there exists a family of diffeomorphisms
\[
F_{s} \colon \mathcal{Y}_{0}^{\mathrm{reg}} \setminus \overline{B}_{\gamma}(\Sing(\mathcal{Y}_{0}), \epsilon(s))
\longrightarrow
F_{s}\bigl(\mathcal{Y}_{0}^{\mathrm{reg}} \setminus \overline{B}_{\gamma}(\Sing(\mathcal{Y}_{0}), \epsilon(s))\bigr)
\subset \mathcal{Y}_{s}
\]
with the following properties.
\begin{enumerate}
\item \(F_{0} = \mathrm{id}_{\mathcal{Y}_{0}^{\mathrm{reg}}}\).
\item For every \(z\) in the domain of \(F_{s}\),
\[
d_{\gamma}(F_{s}(z), z) \le K_{1} |s|^{\frac{3}{4}}.
\]
\item If \(\beta\) is a smooth differential form on \(\mathcal{Y}\), then
\[
\norm{F_{s}^{*}\beta_{s} - \beta_{0}}_{L^{\infty}(\mathcal{Y}_{0}^{\mathrm{reg}} \setminus
\overline{B}_{\gamma}(\Sing(\mathcal{Y}_{0}), \epsilon(s)))} \le K_{2,\beta} |s|^{\frac{1}{2}}.
\]
\item Let \(\gamma_{s} := \gamma|_{\mathcal{Y}_{s}}\), and let \((F_{s})_{*}\chi\) denote the push-forward of a
test function \(\chi\) by \(F_{s}\), extended by \(0\) outside the image of \(F_{s}\). Then for all
\(\chi,\chi' \in C_{0}^{\infty}(\mathcal{Y}_{0}^{\mathrm{reg}} \setminus \overline{B}_{\gamma}(\Sing(\mathcal{Y}_{0}), \epsilon(s)))\),
\[
\left| \bigl( (F_{s})_{*}\chi, (F_{s})_{*}\chi' \bigr)_{L^{2}(\mathcal{Y}_{s},\gamma_{s})}
- (\chi,\chi')_{L^{2}(\mathcal{Y}_{0}^{\mathrm{reg}},\gamma_{0})} \right|
\le
K_{3} |s|^{\frac{1}{2}} \norm{\chi}_{L^{2}(\mathcal{Y}_{0}^{\mathrm{reg}},\gamma_{0})}
\norm{\chi'}_{L^{2}(\mathcal{Y}_{0}^{\mathrm{reg}},\gamma_{0})},
\]
and
\[
\left|
\norm{ d\bigl((F_{s})_{*}\chi\bigr) }_{L^{2}(\mathcal{Y}_{s},\gamma_{s})}^{2}
-
\norm{ d\chi }_{L^{2}(\mathcal{Y}_{0}^{\mathrm{reg}},\gamma_{0})}^{2}
\right|
\le
K_{3} |s|^{\frac{1}{2}} \norm{ d\chi }_{L^{2}(\mathcal{Y}_{0}^{\mathrm{reg}},\gamma_{0})}^{2}.
\]
In particular,
\[
\left|
\norm{ d\bigl((F_{s})_{*}\chi\bigr) }_{L^{2}(\mathcal{Y}_{s},\gamma_{s})}
-
\norm{ d\chi }_{L^{2}(\mathcal{Y}_{0}^{\mathrm{reg}},\gamma_{0})}
\right|
\le
K_{3} |s|^{\frac{1}{2}} \norm{ d\chi }_{L^{2}(\mathcal{Y}_{0}^{\mathrm{reg}},\gamma_{0})}.
\]
\item Let \(\alpha\) be a smooth \((1,1)\)-form on \(\mathcal{Y}\). Then for every
\(\chi \in C_{0}^{\infty}(\mathcal{Y}_{0}^{\mathrm{reg}} \setminus \overline{B}_{\gamma}(\Sing(\mathcal{Y}_{0}), \epsilon(s)))\),
\[
\left|
\int_{\mathcal{Y}_{s}} (F_{s})_{*}\chi \, \alpha_{s}^{n}
-
\int_{\mathcal{Y}_{0}} \chi \, \alpha_{0}^{n}
\right|
\le
K_{4,\alpha} |s|^{\frac{1}{2}} \norm{\chi}_{L^{1}(\mathcal{Y}_{0}^{\mathrm{reg}},\gamma_{0})},
\]
and
\[
\left|
\int_{\mathcal{Y}_{s}} d\bigl((F_{s})_{*}\chi\bigr) \wedge d^{c}\bigl((F_{s})_{*}\chi\bigr) \wedge \alpha_{s}^{n-1}
-
\int_{\mathcal{Y}_{0}} d\chi \wedge d^{c}\chi \wedge \alpha_{0}^{n-1}
\right|
\le
K_{4,\alpha} |s|^{\frac{1}{2}} \norm{d\chi}_{L^{2}(\mathcal{Y}_{0}^{\mathrm{reg}},\gamma_{0})}^{2}.
\]
\end{enumerate}
\end{proposition}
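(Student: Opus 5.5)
Away from \(\Sing(\mathcal{Y}_0)\) the map \(\pi\) is a submersion, so the plan is to build \(F_s\) by flowing along a horizontal lift of the radial vector field on the base. The only real work is to control how this lift behaves as one approaches \(\Sing(\mathcal{Y}_0)\).

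\textbf{Step 1: the horizontal lift.} On \(\mathcal{Y}\setminus\Sing(\mathcal{Y}_0)\) we have \(d\pi\neq 0\). Let \(V\) be the \(\gamma\)-horizontal lift of \(\partial_{\mathrm{Re}\,s}\), and similarly for \(\partial_{\mathrm{Im}\,s}\), namely
\[
V=\frac{\nabla \operatorname{Re}\pi}{|\nabla \operatorname{Re}\pi|^{2}}
\]
suitably orthogonalized. For \(z\in\mathcal{Y}_0^{\mathrm{reg}}\), define \(F_s(z)\) as the endpoint of the flow of the lift of the straight segment \(0\to s\) in the base. Then \(F_0=\mathrm{id}\), which gives (1), and \(F_s\) is a diffeomorphism onto its image by standard ODE theory, provided the flow line stays in a region where \(|d\pi|_\gamma\) is bounded below.

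\textbf{Step 2: lower bound on \(|d\pi|_\gamma\).} In an \(E_J\)-adapted chart we have \(s=z_0\cdots z_p\), so
\[
|d\pi|_\gamma\asymp \sum_{j}\prod_{i\neq j}|z_i|.
\]
At a point with \(d_\gamma(z,\Sing)\ge\epsilon\) lying near \(E_0\) only, \(|d\pi|\gtrsim \prod_{i\ge1}|z_i|\gtrsim\epsilon^{p}\ge\epsilon^{n}\). Hence \(|V|\lesssim \epsilon(s)^{-n}\lesssim |s|^{-n/(4\nu)}\le |s|^{-1/4}\), using \(\nu\ge n\). Integrating along a base path of length \(|s|\) gives displacement at most \(K_1|s|\cdot|s|^{-1/4}=K_1|s|^{3/4}\). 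Since \(|s|^{3/4}\ll\epsilon(s)/2\), the flow line never leaves the region \(d_\gamma(\cdot,\Sing)\ge\epsilon/2\), where the same bound holds. A bootstrap or continuity argument closes this and proves (2).

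\textbf{Step 3: derivative bounds, the main obstacle.} Property (3) requires \(|dF_s-\iota|\lesssim|s|^{1/2}\), meaning the differential of the flow is close to the inclusion. By Grönwall, \(|dF_s-\mathrm{id}|\lesssim |s|\sup|\nabla V|\,e^{|s|\sup|\nabla V|}\). Differentiating \(V\) costs one more inverse power of \(|d\pi|\), and the second derivatives of \(\pi\) are bounded, so \(|\nabla V|\lesssim |d\pi|^{-2}\lesssim\epsilon^{-2n}\lesssim|s|^{-1/2}\), again using \(\nu\ge n\). This gives \(|s|\cdot|s|^{-1/2}=|s|^{1/2}\), which is exactly the exponent claimed. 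I expect this to be the delicate step: one must check that the chart constants are uniform across strata, and in particular that \(|\nabla V|\lesssim|d\pi|^{-2}\) holds uniformly in adapted coordinates. This is where the choice \(\epsilon=2|s|^{1/(4\nu)}\) is calibrated.

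\textbf{Step 4: consequences (4) and (5).} Combining (2), (3) and the smoothness of \(\beta\), we get
\[
F_s^*\beta_s-\beta_0=(F_s^*\beta-\beta)|_{\mathcal{Y}_0}.
\]
This is bounded by \(\|\beta\|_{C^1}\,d(F_s z,z)\) plus \(\|\beta\|_{C^0}\,|dF_s-\mathrm{id}|\), which is \(\lesssim|s|^{1/2}\). Applying this to \(\beta=\gamma\) shows that \(F_s^*\gamma_s\) and \(\gamma_0\) are \(|s|^{1/2}\)-close as metrics. Consequently the volume forms, the inverse metrics and the Hodge norms of \(1\)-forms are all \((1+O(|s|^{1/2}))\)-comparable. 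Changing variables by \(F_s\) then yields the \(L^2\)-inner product and Dirichlet energy estimates in (4); the estimate \(|\,\|\cdot\|-\|\cdot\|\,|\) follows from \(|a-b|\le|a^2-b^2|/(a+b)\). Property (5) follows in the same way: pull back the integrands \(\chi\,\alpha_s^n\) and \(d\chi\wedge d^c\chi\wedge\alpha_s^{n-1}\) by \(F_s\), apply (3) to \(\alpha\) and to the almost-complex structure (so that \(F_s^*d^c\) differs from \(d^c\) by \(O(|s|^{1/2})\)), and bound the error pointwise by \(|s|^{1/2}|\chi|\) and \(|s|^{1/2}|d\chi|^2_{\gamma_0}\), respectively, before integrating.
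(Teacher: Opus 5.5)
Your proposal is correct and is essentially the paper's argument: the paper flows along $\cos\theta\,U+\sin\theta\,V$, the real parts of $\Theta=(d\pi)^{\sharp_\gamma}/\norm{d\pi}_\gamma^{2}$. This is the same horizontal lift as yours, and no orthogonalization is needed since $\nabla\operatorname{Im}\pi=J\nabla\operatorname{Re}\pi$. The paper then bounds $\abs{U}\lesssim r^{-\nu}$ and $\abs{\nabla U}\lesssim r^{-2\nu}$ via the \L{}ojasiewicz-type estimate $\norm{d\pi}_\gamma^{2}\gtrsim d_\gamma(\cdot,\Sing(\mathcal{Y}_0))^{2\nu}$ with $r=\abs{s}^{1/(4\nu)}$, and concludes with a first-exit-time argument and Gr\"onwall, exactly as in your Steps 2--4. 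Your explicit check that $F_s^{*}J_s$ is $O(\abs{s}^{1/2})$-close to $J_0$ is a point the paper leaves implicit when deriving the $d\chi\wedge d^{c}\chi$ estimate of (5).
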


\begin{remark}
    The construction of the retraction using a vector flow is classic,
    as seen in Clemens' retraction \cites{Clemens1969,Clemens1977} and in \cite{Kodaira1986}*{Proof of Th.~2.3}. 
    To the author's knowledge, a quantitative version of this retraction first appeared
    in \cite{Dai-Yoshikawa2025}*{Section~6} 
    for the degeneration of Riemann surfaces.
    This flow construction was also utilized recently
    in \cite{CGP2021}*{Section~2}
    to study complex Monge-Amp\`ere equations.
\end{remark}

We construct a family of diffeomorphisms which sends
the smooth part of the singular fiber to nearby smooth fibers. 
The idea is to pick a \(C^\infty\) complex vector field \(v\) on \(\cY \setminus \mathrm{Crit}(\pi) = \cY \setminus \Sing \cY_0\) satisfying \(\pi_* v = \pa/\pa s\), and use this vector field to flow points on the singular fiber to nearby smooth fibers. 

We begin with a direct Łojasiewicz-type 
estimate, imitating the same estimate in 
\cite{Dai-Yoshikawa2025}*{Lemma~6.2}.

\begin{lemma}
\label[lemma]{lem:dpi-lojasiewicz}
There exists a constant \(c_{0} > 0\) such that
\[
\norm{d\pi(z)}_{\gamma}^{2} \ge c_{0}\, d_{\gamma}\bigl(z,\Sing(\mathcal{Y}_{0})\bigr)^{2n}
\]
for all \(z \in \pi^{-1}(\mathbb{D}_{\rho})\), after shrinking \(\rho > 0\) if necessary. In particular,
for every integer \(\nu \ge n\) there exists \(c_{\nu} > 0\) such that
\[
\norm{d\pi(z)}_{\gamma}^{2} \ge c_{\nu}\, d_{\gamma}\bigl(z,\Sing(\mathcal{Y}_{0})\bigr)^{2\nu}.
\]
\end{lemma}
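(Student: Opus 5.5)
The plan is to reduce to a local computation in semistable coordinates and then use compactness. Since \(\pi\) is proper, \(\pi^{-1}(\overline{\D}_{\rho})\) is compact. Away from a neighbourhood \(U\) of \(\Sing(\cY_0)\), \(d\pi\) does not vanish: \(\Crit(\pi) = \Sing(\cY_0)\) because \(\cY_0\) is reduced snc and the other fibers are smooth. Hence \(\norm{d\pi}_\gamma\) is bounded below by a positive constant on \(\pi^{-1}(\overline{\D}_\rho)\setminus U\). The distance \(d_\gamma(z,\Sing(\cY_0))\) is bounded above there by a constant, so the inequality holds on this region for small \(c_0\). It therefore suffices to prove the estimate near each point \(q \in \Sing(\cY_0)\). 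By compactness of \(\Sing(\cY_0)\), finitely many adapted charts cover a neighbourhood, and there \(\gamma\) is uniformly equivalent to the Euclidean metric.

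Near \(q\), choose an \(E_J\)-adapted chart \((z_0,\dots,z_n)\) with \(\pi = z_0 z_1\cdots z_p\) and \(p \ge 1\). Then
\[
d\pi = \sum_{j=0}^{p} \Bigl(\prod_{i\ne j,\,0\le i\le p} z_i\Bigr) dz_j,
\qquad
\norm{d\pi}^2 \asymp \sum_{j=0}^{p} \prod_{i\ne j} \abs{z_i}^2 .
\]
Locally, \(\Sing(\cY_0)\) is the union over pairs \(i<j\le p\) of \(\{z_i=z_j=0\}\). Its Euclidean distance is therefore comparable to the second smallest of \(\abs{z_0},\dots,\abs{z_p}\); call it \(r\). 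Suppose \(\abs{z_{j_0}}\) is the smallest modulus. Then the \(j=j_0\) term satisfies
\[
\prod_{i\ne j_0}\abs{z_i}^2 \ge r^{2p}.
\]
Here every remaining \(\abs{z_i}\) is at least \(r\). This gives \(\norm{d\pi}^2 \gtrsim d(z,\Sing)^{2p} \ge d(z,\Sing)^{2n}\), using \(p\le n\) and \(d\le 1\) after shrinking the chart. The Euclidean distance to the local singular set is comparable to the \(\gamma\)-distance to \(\Sing(\cY_0)\) once the chart is small relative to the gap from other components of \(\Sing(\cY_0)\). This comparison follows from the uniform equivalence of metrics and a standard Lebesgue-number argument.

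The second statement follows immediately. For \(\nu\ge n\), \(d^{2\nu} \le (\operatorname{diam})^{2\nu-2n} d^{2n}\) with bounded diameter, so we may take \(c_\nu = c_0\,\mathrm{diam}^{2n-2\nu}\). Alternatively, one can shrink so that \(d\le1\) and take \(c_\nu=c_0\).

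The only mildly delicate step is identifying local and global distances to \(\Sing(\cY_0)\) uniformly. One must ensure that within each chart the nearest singular point lies in the chart, by taking charts of radius \(2\epsilon\) and applying the estimate only on the \(\epsilon\)-subcharts. Points of \(U\) lie in one of the finitely many \(\epsilon\)-subcharts, so the constants are uniform.
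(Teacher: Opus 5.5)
Your proof is correct and follows the paper's argument. Both work in semistable coordinates \(\pi = z_0\cdots z_p\), order the moduli, bound the term of \(d\pi\) omitting the smallest coordinate below by the \(2p\)-th power of the second smallest modulus, compare with \(d_\gamma(z,\Sing(\cY_0))\), and use compactness away from \(\Sing(\cY_0)\); the paper keeps two terms and applies a power-mean inequality, which is equivalent to your single-term bound. One simplification: only the easy inequality \(d_\gamma(z,\Sing(\cY_0)) \lesssim r\) is needed. It follows by setting the two smallest coordinates to zero, which gives a point of \(\Sing(\cY_0)\) inside the convex polydisc chart, so the Lebesgue-number argument for the reverse comparison, the step you flagged as delicate, is unnecessary.
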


\begin{proof}
Choose finitely many adapted coordinate charts \(U_{\lambda} \Subset V_{\lambda}\) covering a neighborhood
of \(\mathcal{Y}_{0}\), where \(V_{\lambda}\) carries coordinates
\((z_{0},\dots,z_{n})\) such that
\[
\pi = z_{0} \cdots z_{p}
\]
for some \(0 \le p \le n\), and the components of \(\mathcal{Y}_{0}\) meeting \(V_{\lambda}\) are given by
\(\{ z_{0} = 0 \}, \dots, \{ z_{p} = 0 \}\). On each \(U_{\lambda}\) the metric \(\gamma\) is uniformly
equivalent to the Euclidean metric \(g_{\rm Euc}\), so it suffices to work in these coordinates.

In \(V_{\lambda}\) we have
\[
d\pi = \sum_{j=0}^{p} z_{0} \cdots \widehat{z_{j}} \cdots z_{p}\, dz_{j},
\]
hence
\[
\norm{d\pi(z)}_{g_{\rm Euc}}^{2}
=
\sum_{j=0}^{p} \bigl| z_{0} \cdots \widehat{z_{j}} \cdots z_{p} \bigr|^{2}.
\]
After reordering the coordinates, we may assume
\[
|z_{0}| \le |z_{1}| \le \cdots \le |z_{p}|.
\]
Since \(\{ z_{0} = z_{1} = 0 \} \subset \Sing(\mathcal{Y}_{0})\), we have
\[
d_{\gamma}\bigl(z,\Sing(\mathcal{Y}_{0})\bigr)^{2}
\lesssim
|z_{0}|^{2} + |z_{1}|^{2}.
\]
On the other hand,
\[
\norm{d\pi(z)}_{g_{\rm Euc}}^{2}
\ge
|z_{1} \cdots z_{p}|^{2} + |z_{0} z_{2} \cdots z_{p}|^{2}
\ge
|z_{1}|^{2p} + |z_{0}|^{2p}
\ge
2^{1-p}\bigl(|z_{0}|^{2} + |z_{1}|^{2}\bigr)^{p}.
\]
Therefore
\[
\norm{d\pi(z)}_{\gamma}^{2}
\gtrsim
d_{\gamma}\bigl(z,\Sing(\mathcal{Y}_{0})\bigr)^{2p}
\ge
d_{\gamma}\bigl(z,\Sing(\mathcal{Y}_{0})\bigr)^{2n}
\]
whenever \(z\) stays in a sufficiently small neighborhood of \(\Sing(\mathcal{Y}_{0})\), because \(p \le n\)
and \(d_{\gamma}(z,\Sing(\mathcal{Y}_{0})) \le 1\) there. Away from a fixed neighborhood of
\(\Sing(\mathcal{Y}_{0})\), the function \(\norm{d\pi}_{\gamma}\) has a positive lower bound on
\(\pi^{-1}(\mathbb{D}_{\rho})\) after shrinking \(\rho\), so the same inequality holds globally, possibly
with a smaller constant.

The second assertion follows immediately from the first, since
\(d_{\gamma}(z,\Sing(\mathcal{Y}_{0})) \le 1\) near \(\mathcal{Y}_{0}\) and \(\nu \ge n\).
\end{proof}

\begin{remark}
    Because $\abs{\di \pi}^2$ is locally real analytic, an estimate analogous to the one in \Cref{lem:dpi-lojasiewicz} can be derived from the first Łojasiewicz inequality (cf.\ \cite{malgrange1966ideals}*{p.~62, Theorem~4.1}). It should be noted, however, that the general Łojasiewicz inequality does {not} yield precise information regarding the exponent $\nu$ in \Cref{lem:dpi-lojasiewicz}.
\end{remark}

Then we introduce the vector field used in the construction of \Cref{prop:quantitative-retraction}. We follow
closely the strategy in \cite{Dai-Yoshikawa2025}*{Section~6}.

On \(\mathcal{Y}\setminus \Sing(\mathcal{Y}_{0})\), define the \((1,0)\)-vector field
\[
\Theta:=\frac{(d\pi)^{\sharp_{\gamma}}}{\norm{d\pi}_{\gamma}^{2}} .
\]
Then \(\pi_{*}\Theta=\partial/\partial s\). We define real vector fields \(U,V\) by
\(U-iV:=2\Theta\). If \(s=u+iv\) for $u \coloneqq \mathrm{Re}\,s$
and $v \coloneqq \mathrm{Im}\,s$, then
\[
\pi_{*}U=\frac{\partial}{\partial u},
\qquad
\pi_{*}V=\frac{\partial}{\partial v}.
\]

\begin{lemma}
\label[lemma]{lem:horizontal-vector-field-estimates}
For every \(0<r\leq 1\), there is a constant \(C>0\), independent of \(r\), such that on
\(\pi^{-1}(\mathbb{D}_{\rho})\setminus B_{\gamma}(\Sing(\mathcal{Y}_{0}),r)\),
\[
\abs{U}_{\gamma}+\abs{V}_{\gamma}\leq Cr^{-\nu},
\qquad
\abs{\nabla U}_{\gamma}+\abs{\nabla V}_{\gamma}\leq Cr^{-2\nu}.
\]
Here and below, \(\nabla\) denotes a fixed smooth connection on \(T\mathcal{Y}\), for instance the
Levi-Civita connection of \(\gamma\). Equivalently, in the finitely many adapted charts used above, one may
replace \(\nabla\) by ordinary coordinate differentiation; the resulting norms are uniformly comparable, so
only the constants change.
\end{lemma}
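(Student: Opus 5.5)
The plan is a direct estimate from the explicit formula
\[
\Theta=\frac{(d\pi)^{\sharp_\gamma}}{\norm{d\pi}_\gamma^2},
\]
combined with the lower bound of \Cref{lem:dpi-lojasiewicz}, used in its $\nu$-version. Since $U=\Theta+\bar\Theta$ and $V=i(\Theta-\bar\Theta)$ up to the normalization $U-iV=2\Theta$, it suffices to bound $\abs{\Theta}_\gamma$ and $\abs{\nabla\Theta}_\gamma$ on the region $\pi^{-1}(\D_\rho)\setminus B_\gamma(\Sing(\cY_0),r)$. Moreover, $\pi^{-1}(\overline{\D}_\rho)$ is compact, so all smooth data ($\gamma$, $d\pi$, their derivatives, the Christoffel symbols) are uniformly bounded there. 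This remains true if we work in the finitely many adapted charts $U_\lambda$ with coordinate derivatives.

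\emph{Zeroth order.} Since $\abs{(d\pi)^{\sharp}}_\gamma=\norm{d\pi}_\gamma$, we have $\abs{\Theta}_\gamma=\norm{d\pi}_\gamma^{-1}$. On the region in question, $d_\gamma(z,\Sing(\cY_0))\ge r$, so \Cref{lem:dpi-lojasiewicz} gives $\norm{d\pi}_\gamma^2\ge c_\nu r^{2\nu}$. Hence
\[
\abs{\Theta}_\gamma\le c_\nu^{-1/2}r^{-\nu}.
\]
This constant is independent of $r\in(0,1]$.

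\emph{First order.} Write $\Theta=\phi\, W$ with $W=(d\pi)^{\sharp_\gamma}$, a smooth vector field on all of $\cY$, and $\phi=\norm{d\pi}_\gamma^{-2}=h^{-1}$, where $h=\norm{d\pi}_\gamma^2$ is smooth on $\cY$. Then
\[
\nabla\Theta=h^{-1}\nabla W - h^{-2}\,dh\otimes W .
\]
The term $\abs{\nabla W}_\gamma$ is bounded by a constant, and $h^{-1}\le c_\nu^{-1}r^{-2\nu}$. For the second term, the naive bound $\abs{dh}\abs{W}\le C$ would give $h^{-2}\lesssim r^{-4\nu}$, which is too weak. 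The key observation is instead $\abs{dh}_\gamma\le C\abs{W}_\gamma=C h^{1/2}$. This holds because $h=\abs{W}^2$, so $dh=2\,\mathrm{Re}\langle\nabla W,W\rangle$ and $\abs{dh}\le 2\abs{\nabla W}\abs{W}$. Therefore
\[
h^{-2}\abs{dh}\abs{W}\le C h^{-2}\cdot h^{1/2}\cdot h^{1/2}=C h^{-1}\le C' r^{-2\nu}.
\]
Together these give $\abs{\nabla U}_\gamma+\abs{\nabla V}_\gamma\le Cr^{-2\nu}$.

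The only delicate point is this first-order term. Without the identity $\abs{dh}\lesssim h^{1/2}$, one would lose a factor $r^{-2\nu}$. With it, the derivative estimate costs exactly one extra factor of $h^{-1/2}\lesssim r^{-\nu}$ beyond the zeroth-order bound. Finally, replacing $\nabla$ by another connection or by coordinate derivatives only adds a term bounded by $C\abs{\Theta}\le Cr^{-\nu}\le Cr^{-2\nu}$ (since $r\le1$), so the statement is insensitive to that choice.
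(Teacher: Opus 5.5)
Your proof is correct and follows the same route as the paper: you use $\abs{\Theta}_\gamma=\norm{d\pi}_\gamma^{-1}$ together with \Cref{lem:dpi-lojasiewicz} for the zeroth-order bound, and $\abs{\nabla\Theta}_\gamma\lesssim\norm{d\pi}_\gamma^{-2}$ for the first-order bound. Your observation that $\abs{dh}_\gamma\lesssim h^{1/2}$ for $h=\norm{d\pi}_\gamma^2$ is the detail the paper compresses into the word ``hence'' when it passes from bounded coefficients to $\abs{\nabla\Theta}_\gamma\le C\norm{d\pi}_\gamma^{-2}$.
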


\begin{proof}
Since \(\abs{\Theta}_{\gamma}=\norm{d\pi}_{\gamma}^{-1}\), the first estimate follows from
\Cref{lem:dpi-lojasiewicz}. For the derivative estimate, in the adapted charts the coefficients of
\(\gamma,\gamma^{-1}\), and \(\pi\), together with their derivatives up to order two, are uniformly bounded.
Hence \(\abs{\nabla\Theta}_{\gamma}\leq C\norm{d\pi}_{\gamma}^{-2}\). By
\Cref{lem:dpi-lojasiewicz}, this is bounded by \(Cr^{-2\nu}\) away from
\(B_{\gamma}(\Sing(\mathcal{Y}_{0}),r)\). The estimates for \(U,V\) follow because they are the real and
imaginary parts of \(2\Theta\).
\end{proof}

For \(\theta\in[0,2\pi]\), set \(W^{\theta}:=(\cos\theta)U+(\sin\theta)V\). Then
\[
\pi_{*}W^{\theta}
=
(\cos\theta)\frac{\partial}{\partial u}
+
(\sin\theta)\frac{\partial}{\partial v}
=
e^{i\theta},
\]
where we identify \(T\mathbb{D}\) with \(\mathbb{C}\). Let
\[
M_{r}:=Cr^{-\nu},
\qquad
N_{r}:=Cr^{-2\nu},
\qquad
\delta_{r}:=\frac{r^{2\nu}}{2C}.
\]
Then
\(\delta_{r}\leq \min\{r/M_{r},1/(2N_{r})\}\) for all \(0<r\leq 1\).
Choose \(r_{0}>0\) sufficiently small so that
\[
B_{\gamma}(\Sing(\mathcal{Y}_{0}),2r_{0})\subset \pi^{-1}(\mathbb{D}_{\rho})
\qquad\text{and}\qquad
\delta_{r}<\rho
\quad
(0<r<r_{0}).
\]
In what follows we take \(0<r<r_{0}\). For
\(z\in\mathcal{Y}_{0}^{\mathrm{reg}}\setminus
\overline{B}_{\gamma}(\Sing(\mathcal{Y}_{0}),2r)\), let \(\Phi^{\theta}(\eta,z)\) be the solution of
\begin{equation}
\label{eqn:flow-equation}
\frac{d}{d\eta}\Phi^{\theta}(\eta,z)
=
W^{\theta}_{\Phi^{\theta}(\eta,z)},
\qquad
\Phi^{\theta}(0,z)=z .
\end{equation}
This solution is defined for all \(\abs{\eta}\leq\delta_{r}\). Indeed, let
\((-\tau_{-},\tau_{+})\) be its maximal interval of existence in
\(\pi^{-1}(\mathbb{D}_{\rho})\setminus\Sing(\mathcal{Y}_{0})\). We prove the positive-time direction.

Let \(T_{+}\) be the first time at which the trajectory enters
\(\overline{B}_{\gamma}(\Sing(\mathcal{Y}_{0}),r)\), namely
\[
T_{+}:=\inf\left\{
\eta\in[0,\tau_{+}) \mid
d_{\gamma}\bigl(\Phi^{\theta}(\eta,z),\Sing(\mathcal{Y}_{0})\bigr)\leq r
\right\},
\]
with the convention that \(T_{+}=+\infty\) if the set is empty. If \(T_{+}<+\infty\), then by continuity
\[
\Phi^{\theta}(T_{+},z)\in \partial B_{\gamma}(\Sing(\mathcal{Y}_{0}),r)
\subset \pi^{-1}(\mathbb{D}_{\rho})\setminus\Sing(\mathcal{Y}_{0}),
\]
so the ODE can be continued past \(T_{+}\). Hence \(T_{+}<\tau_{+}\).

We claim that \(T_{+}\geq\delta_{r}\). Otherwise \(T_{+}<\delta_{r}< \infty\), and thus
\(T_{+}<\tau_{+}\). For \(0\leq\eta<T_{+}\), the trajectory stays in
\[
\pi^{-1}(\mathbb{D}_{\rho})
\setminus
B_{\gamma}(\Sing(\mathcal{Y}_{0}),r),
\]
so \(\abs{W^{\theta}}_{\gamma}\leq M_{r}\). Hence
\[
d_{\gamma}\bigl(\Phi^{\theta}(\eta,z),z\bigr)
\leq
\int_{0}^{\eta}
\abs{W^{\theta}_{\Phi^{\theta}(\sigma,z)}}_{\gamma}\,d\sigma
\leq
M_{r}\eta
<
M_{r}\delta_{r}
\leq
r/2 .
\]
Letting \(\eta\to T_{+}\), we get
\(d_{\gamma}(\Phi^{\theta}(T_{+},z),z)\leq r/2\). Since
\(d_{\gamma}(z,\Sing(\mathcal{Y}_{0})) > 2r\), the triangle inequality gives
\[
d_{\gamma}\bigl(\Phi^{\theta}(T_{+},z),\Sing(\mathcal{Y}_{0})\bigr)
\geq \frac{3}{2}r,
\]
contradicting the definition of \(T_{+}\). Thus \(T_{+}\geq\delta_{r}\).

It follows that the trajectory remains in
\(\pi^{-1}(\mathbb{D}_{\rho})\setminus \overline{B}_{\gamma}(\Sing(\mathcal{Y}_{0}),r)\)
for \(0\leq\eta<\min\{\tau_{+},\delta_{r}\}\). If \(\tau_{+}\leq\delta_{r}\), then its image is contained in
the compact subset
\[
\pi^{-1}(\overline{\mathbb{D}}_{\delta_{r}})
\setminus
B_{\gamma}(\Sing(\mathcal{Y}_{0}),r)
\subset
\pi^{-1}(\mathbb{D}_{\rho})\setminus\Sing(\mathcal{Y}_{0}),
\]
where \(W^{\theta}\) is smooth. The ODE continuation theorem therefore extends the solution past
\(\tau_{+}\), contradicting maximality. Hence \(\tau_{+}>\delta_{r}\). The negative-time direction is
identical.

\begin{lemma}
\label[lemma]{lem:flow-basic}
For \(0<r < r_0\), \(\theta\in[0,2\pi]\), \(z\in
\mathcal{Y}_{0}^{\mathrm{reg}}\setminus \overline{B}_{\gamma}(\Sing(\mathcal{Y}_{0}),2r)\), and
\(\abs{\eta}\leq\delta_{r}\), one has
\[
\Phi^{\theta}(\eta,z)\in
\mathcal{Y}_{\eta e^{i\theta}}\setminus \overline{B}_{\gamma}(\Sing(\mathcal{Y}_{0}),r),
\qquad
d_{\gamma}\bigl(\Phi^{\theta}(\eta,z),z\bigr)\leq M_{r}\abs{\eta}.
\]
Moreover, \(\Phi_{\eta}^{\theta}:z\mapsto \Phi^{\theta}(\eta,z)\) is a diffeomorphism from
\(\mathcal{Y}_{0}^{\mathrm{reg}}\setminus \overline{B}_{\gamma}(\Sing(\mathcal{Y}_{0}),2r)\) onto its image in
\(\mathcal{Y}_{\eta e^{i\theta}}\).
\end{lemma}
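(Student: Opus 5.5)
The plan has three parts: the first two claims come directly from the existence argument given just before the statement, and the diffeomorphism claim follows from standard ODE facts.

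\emph{Fiber and distance claims.} The preceding argument shows that for \(z\in\mathcal{Y}_{0}^{\mathrm{reg}}\setminus\overline{B}_{\gamma}(\Sing(\mathcal{Y}_{0}),2r)\) the trajectory exists for \(\abs{\eta}\le\delta_r\). On this interval it stays outside \(\overline{B}_{\gamma}(\Sing(\mathcal{Y}_{0}),r)\); this uses \(T_{\pm}\ge\delta_r\), and at \(\eta=\pm\delta_r\) itself the bound \(d_\gamma(\Phi^\theta(\eta,z),z)\le M_r\delta_r\le r/2\) applies. For the fiber, differentiate \(\pi\circ\Phi^\theta(\cdot,z)\):
\[
\frac{d}{d\eta}\pi(\Phi^\theta(\eta,z))=\pi_*W^\theta=e^{i\theta}, \qquad \pi(\Phi^\theta(0,z))=0,
\]
so \(\pi(\Phi^\theta(\eta,z))=\eta e^{i\theta}\). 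The distance bound comes from integrating \(\abs{W^\theta}_\gamma\le M_r\) along the trajectory, which lies in the region where Lemma~\ref{lem:horizontal-vector-field-estimates} applies. This gives \(d_\gamma(\Phi^\theta(\eta,z),z)\le M_r\abs{\eta}\).

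\emph{Diffeomorphism claim.} Write \(\Omega:=\pi^{-1}(\mathbb{D}_\rho)\setminus\Sing(\mathcal{Y}_0)\), which is an open set on which \(W^\theta\) is smooth. Its flow \(\Phi^\theta\) is smooth on its open domain of definition, and that domain contains \([-\delta_r,\delta_r]\times(\mathcal{Y}_{0}^{\mathrm{reg}}\setminus\overline{B}_{\gamma}(\Sing(\mathcal{Y}_{0}),2r))\). Hence \(\Phi^\theta_\eta\) is smooth on this set, and by the fiber computation it maps into \(\mathcal{Y}_{\eta e^{i\theta}}\).

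Injectivity and the smooth inverse both come from the flow in time \(-\eta\). The flow of \(W^\theta\) is a local diffeomorphism of \(\Omega\), and flowing the point \(\Phi^\theta_\eta(z)\) backward for time \(\eta\) returns it to \(z\); here we use that the flow of \(W^\theta\) is defined on the whole trajectory, so uniqueness of ODE solutions applies. Consequently \(\Phi^\theta_{-\eta}\circ\Phi^\theta_\eta=\mathrm{id}\) on the domain. This shows \(\Phi^\theta_\eta\) is injective and that its inverse on the image, namely \(\Phi^\theta_{-\eta}\) restricted to the image, is smooth.

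It remains to see that the image is an open subset of the fiber and that the differential is bijective. The full time-\(\eta\) flow map is a diffeomorphism between open subsets of \(\Omega\). By the fiber computation it carries \(\mathcal{Y}_0\cap\Omega\) into \(\mathcal{Y}_{\eta e^{i\theta}}\), and the time-\((-\eta)\) flow carries \(\mathcal{Y}_{\eta e^{i\theta}}\cap\Omega\) back into \(\mathcal{Y}_0\). So the flow restricts to a diffeomorphism of these submanifolds, since the restriction of a diffeomorphism preserving an embedded submanifold, whose inverse also preserves it, is a diffeomorphism onto its image.

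The only step needing care is this domain bookkeeping: making sure the backward flow from an image point stays inside \(\Omega\) for times up to \(\eta\). This holds because that backward trajectory is exactly the forward trajectory retraced, which stays outside \(\overline{B}_{\gamma}(\Sing(\mathcal{Y}_0),r)\). No new estimates are needed.
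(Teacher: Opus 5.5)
Your proposal is correct and follows the paper's route. The paper likewise takes the exclusion of $\overline{B}_{\gamma}(\Sing(\mathcal{Y}_0),r)$ and the bound $d_\gamma(\Phi^\theta(\eta,z),z)\le M_r\abs{\eta}$ from the preceding existence argument, identifies the fiber by integrating $\frac{d}{d\eta}\pi(\Phi^\theta(\eta,z))=e^{i\theta}$, and gets the diffeomorphism from ODE uniqueness with inverse given by the time-$(-\eta)$ flow. Your extra bookkeeping (the endpoint $\abs{\eta}=\delta_r$, and restricting the ambient flow diffeomorphism to the embedded submanifolds $\mathcal{Y}_0^{\mathrm{reg}}$ and $\mathcal{Y}_{\eta e^{i\theta}}$) simply fills in details that the paper's proof leaves implicit.
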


\begin{proof}
The distance estimate and the exclusion of \(\overline{B}_{\gamma}(\Sing(\mathcal{Y}_{0}),r)\) were proved above.
Moreover,
\[
\frac{d}{d\eta}\pi(\Phi^{\theta}(\eta,z))
=
\pi_{*}W^{\theta}
=
e^{i\theta}.
\]
Since \(\pi(z)=0\), this gives \(\pi(\Phi^{\theta}(\eta,z))=\eta e^{i\theta}\). The diffeomorphism statement
follows from uniqueness of solutions, with inverse obtained by flowing for time \(-\eta\).
\end{proof}

\begin{lemma}[\cite{Dai-Yoshikawa2025}*{Lemma~6.3}]
\label[lemma]{lem:flow-differential}
There exists a constant \(K_{5}>0\) such that for every \(0<r < r_0\), every
\(\theta\in[0,2\pi]\), every \(z\in\mathcal{Y}_{0}^{\mathrm{reg}}\setminus
\overline{B}_{\gamma}(\Sing(\mathcal{Y}_{0}),2r)\), and every \(0\leq\eta\leq\delta_{r}\),
\[
\norm{D\Phi_{\eta}^{\theta}(z)-D\Phi_{0}^{\theta}(z)}
\leq K_{5}N_{r}\eta .
\]
Hence, whenever \(0\leq\eta\leq r^{4\nu}\),
\[
\norm{D\Phi_{\eta}^{\theta}(z)-I}\leq K_{6}\eta^{1/2}.
\]
\end{lemma}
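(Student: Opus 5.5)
The plan is to linearize the flow \eqref{eqn:flow-equation} along a trajectory and apply Gr\"onwall's inequality, using the derivative bound on \(W^{\theta}\) from \Cref{lem:horizontal-vector-field-estimates}. The hypothesis \(\eta\le\delta_r\) is exactly what keeps the resulting exponential factor bounded.

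\textbf{Step 1: reduce to a single chart.} I would work in the finitely many adapted charts \(U_\lambda\Subset V_\lambda\), in which \(\gamma\) is uniformly equivalent to the Euclidean metric. Let \(\ell>0\) be a Lebesgue number of this cover. By \Cref{lem:flow-basic}, for \(|\eta|\le\delta_r\) the trajectory satisfies
\[
d_\gamma(\Phi^\theta(\eta,z),z)\le M_r\delta_r\le r/2\le r_0/2 .
\]
Shrinking \(r_0\) so that \(r_0<\ell\), the whole trajectory \(\{\Phi^\theta(\sigma,z)\}_{0\le\sigma\le\eta}\) lies in one chart \(V_\lambda\). There \(D\Phi^\theta_\eta(z)\) is an honest matrix, mapping \(T_z\mathcal{Y}_0\) into \(\mathbb{R}^{2n+2}\). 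Moreover \(D\Phi^\theta_0(z)\) is the inclusion \(T_z\mathcal{Y}_0\hookrightarrow T_z\mathcal{Y}\), which is the ``\(I\)'' in the second claim. Operator norms computed with \(\gamma\) and with the Euclidean metric differ only by uniform constants.

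\textbf{Step 2: the variational equation.} Differentiating \eqref{eqn:flow-equation} in \(z\) gives
\[
\frac{d}{d\eta}D\Phi^\theta_\eta(z)=(DW^\theta)\bigl(\Phi^\theta(\eta,z)\bigr)\,D\Phi^\theta_\eta(z).
\]
By \Cref{lem:flow-basic} the trajectory stays outside \(\overline B_\gamma(\Sing(\mathcal{Y}_0),r)\). There \Cref{lem:horizontal-vector-field-estimates} gives \(|DW^\theta|\le |\nabla U|+|\nabla V|\le C' N_r\), where \(C'\) absorbs the chart constants.

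Gr\"onwall then yields \(\|D\Phi^\theta_\eta\|\le e^{C'N_r\eta}\). Since \(\eta\le\delta_r\le 1/(2N_r)\), this is at most \(e^{C'/2}\), uniformly in \(r\) and \(\theta\). Integrating the variational equation once more,
\[
\|D\Phi^\theta_\eta-D\Phi^\theta_0\|\le\int_0^\eta C'N_r\,\|D\Phi^\theta_\sigma\|\,d\sigma\le C'e^{C'/2}N_r\,\eta .
\]
This is the first claim with \(K_5=C'e^{C'/2}\).

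\textbf{Step 3: the \(\eta^{1/2}\) bound.} Suppose \(0\le\eta\le r^{4\nu}\). First, \(\eta\le\delta_r=r^{2\nu}/(2C)\) as soon as \(r^{2\nu}\le 1/(2C)\), which holds after shrinking \(r_0\). So Step 2 applies. Second, \(r^{-2\nu}\le\eta^{-1/2}\), hence
\[
N_r\eta=C r^{-2\nu}\eta\le C\eta^{1/2},
\]
and the claim follows with \(K_6=CK_5\).

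\textbf{Main obstacle.} The only delicate point is the bookkeeping in Step 1: making the comparison of \(D\Phi_\eta\) (at \(\Phi(\eta,z)\)) with \(D\Phi_0\) (at \(z\)) meaningful with constants independent of \(r\). I would handle this by the Lebesgue-number argument above rather than by parallel transport. The smallness \(M_r\delta_r\le r/2\) keeps every trajectory inside a single coordinate chart, and all remaining constants are uniform.
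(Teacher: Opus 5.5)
Your proposal is correct and follows essentially the same route as the paper. The paper differentiates the flow equation, bounds the coordinate Jacobian of $W^\theta$ by a multiple of $N_r$ along the trajectory (which stays outside $\overline{B}_{\gamma}(\Sing(\mathcal{Y}_{0}),r)$), and applies Gr\"onwall using $N_r\delta_r\le 1/2$. It then converts $N_r\eta\le C\eta^{1/2}$ when $\eta\le r^{4\nu}$. Your Lebesgue-number argument keeping each trajectory inside a single chart, and your explicit check that $\eta\le r^{4\nu}$ forces $\eta\le\delta_r$ for small $r$, make precise two points that the paper leaves implicit.
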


\begin{proof}
Fix a finite atlas by real coordinates near \(\mathcal{Y}_{0}\), and write
\(D\Phi_{\eta}^{\theta}(z)\) for the real Jacobian matrix of \(\Phi_{\eta}^{\theta}\) in these coordinates.
Set \(\Xi^{\theta}(\eta,z):=D\Phi_{\eta}^{\theta}(z)\). Differentiating
\cref{eqn:flow-equation} with respect to the real \(z\)-coordinates gives
\[
\frac{d}{d\eta}\Xi^{\theta}(\eta,z)
=
\bigl(\nabla W^{\theta}\bigr)_{\Phi^{\theta}(\eta,z)}
\cdot\Xi^{\theta}(\eta,z),
\]
where \(\nabla W^{\theta}\) is the real Jacobian matrix of the coordinate expression of \(W^{\theta}\), and
\(\cdot\) is ordinary matrix multiplication. Define
\[
\psi^{\theta}(\eta):=
\norm{\Xi^{\theta}(\eta,z)-\Xi^{\theta}(0,z)}.
\]
Since \(\Phi_{0}^{\theta}\) is the identity map, \(\Xi^{\theta}(0,z)=I\). Thus
\[
\Xi^{\theta}(\eta,z)-\Xi^{\theta}(0,z)
=
\int_{0}^{\eta}
\bigl(\nabla W^{\theta}\bigr)_{\Phi^{\theta}(\sigma,z)}
\cdot\Xi^{\theta}(\sigma,z)\,d\sigma .
\]
Along the trajectory we stay outside \(\overline{B}_{\gamma}(\Sing(\mathcal{Y}_{0}),r)\), so
\(\abs{\nabla W^{\theta}}\leq N_{r}\) by \Cref{lem:horizontal-vector-field-estimates}. 
Hence
\[
\psi^{\theta}(\eta)
\leq
N_{r}\int_{0}^{\eta}\norm{\Xi^{\theta}(\sigma,z)}\,d\sigma
\leq
N_{r}\int_{0}^{\eta}\psi^{\theta}(\sigma)\,d\sigma+N_{r}\eta,
\]
where the last inequality uses
\(\norm{\Xi^{\theta}(\sigma,z)}\leq
\psi^{\theta}(\sigma)+\norm{\Xi^{\theta}(0,z)}\), and the uniform bound for
\(\norm{\Xi^{\theta}(0,z)}\) is absorbed into the constant. By Gronwall's inequality,
\(\psi^{\theta}(\eta)\leq e^{N_{r}\eta}-1\leq 2N_{r}\eta\), since
\(N_{r}\eta\leq N_{r}\delta_{r}\leq 1/2\). This proves the first estimate.

If \(0\leq\eta\leq r^{4\nu}\), then
\[
N_{r}\eta\leq Cr^{-2\nu}\eta\leq Cr^{2\nu}=C\eta^{1/2}.
\]
This gives the second estimate.
\end{proof}

\begin{lemma}[\cite{Dai-Yoshikawa2025}*{Lemma~6.4}]
\label[lemma]{lem:pullback-form-estimate}
Let \(\beta\) be a smooth tensor field on \(\mathcal{Y}\). There exists a constant \(C_{\beta}>0\) such that,
for every \(0<r < r_0\), every \(\theta\in[0,2\pi]\), and every \(0\leq\eta\leq r^{4\nu}\),
\[
\norm{(\Phi_{\eta}^{\theta})^{*}\beta_{\eta e^{i\theta}}-\beta_{0}}_{
L^{\infty}(\mathcal{Y}_{0}^{\mathrm{reg}}\setminus \overline{B}_{\gamma}(\Sing(\mathcal{Y}_{0}),2r))}
\leq C_{\beta}\eta^{1/2}.
\]
\end{lemma}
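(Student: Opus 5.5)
The idea is to compare the pulled-back tensor with $\beta_0$ pointwise through the Jacobian bound of \Cref{lem:flow-differential}, plus a displacement estimate from \Cref{lem:flow-basic}. Fix $z\in\mathcal{Y}_{0}^{\mathrm{reg}}\setminus \overline{B}_{\gamma}(\Sing(\mathcal{Y}_{0}),2r)$, and write $w:=\Phi_{\eta}^{\theta}(z)\in\mathcal{Y}_{\eta e^{i\theta}}$ and $A:=D\Phi_{\eta}^{\theta}(z)$. Here $A$ is a linear map from $T_z\mathcal{Y}_0$ to $T_w\mathcal{Y}_{\eta e^{i\theta}}$, and both spaces sit inside tangent spaces of $\mathcal{Y}$. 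We work in the fixed finite atlas of real coordinates from the proof of \Cref{lem:flow-differential}. In those coordinates the restriction $\beta_{\eta e^{i\theta}}$ is just the ambient tensor $\beta_w$ evaluated on fiber-tangent vectors. For a $k$-tensor, the pullback is then
\[
\bigl((\Phi_{\eta}^{\theta})^{*}\beta_{\eta e^{i\theta}}\bigr)_z(X_1,\dots,X_k)=\beta_w(AX_1,\dots,AX_k),\qquad X_i\in T_z\mathcal{Y}_0 .
\]

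\textbf{Step 1.} Split the difference as
\[
\beta_w(AX_1,\dots,AX_k)-\beta_z(X_1,\dots,X_k)=\bigl[\beta_w(AX_\bullet)-\beta_w(X_\bullet)\bigr]+\bigl[\beta_w(X_\bullet)-\beta_z(X_\bullet)\bigr].
\]
In coordinates $X_i$ is regarded as a vector at $w$, and the comparison of norms at $z$ and $w$ costs only a uniform constant because $\gamma$ is smooth and $d_\gamma(w,z)$ is small.

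\textbf{Step 2.} The first bracket is multilinear in the difference $A-I$. Since $\beta$ is smooth on a neighbourhood of $\mathcal{Y}_0$, it is uniformly bounded there in $\gamma$-norm. By \Cref{lem:flow-differential}, $\norm{A-I}\le K_6\eta^{1/2}$ for $0\le\eta\le r^{4\nu}$. Telescoping therefore bounds the first bracket by $C\,\norm{\beta}_{L^\infty}\,(1+K_6)^{k-1}K_6\,\eta^{1/2}\prod_i\abs{X_i}$.

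\textbf{Step 3.} The second bracket is controlled by the Lipschitz constant of $\beta$ in the charts times $d_\gamma(w,z)$. By \Cref{lem:flow-basic}, $d_\gamma(w,z)\le M_r\eta=Cr^{-\nu}\eta$. Using $\eta\le r^{4\nu}$ we have $r^{-\nu}\le\eta^{-1/4}$, so $M_r\eta\le C\eta^{3/4}\le C\eta^{1/2}$ for $\eta\le1$. This is also the displacement bound behind item (2) of \Cref{prop:quantitative-retraction}. Adding Steps 2 and 3 gives the claim, with $C_\beta$ depending only on the $C^1$ norm of $\beta$ near $\mathcal{Y}_0$ and on the atlas.

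\textbf{Main obstacle.} The only delicate point is bookkeeping. The norm on the left-hand side must be taken with respect to $\gamma_0$ on $T_z\mathcal{Y}_0$, while the chart quantities are Euclidean. One also has to check that $z$ and $w$ lie in a common chart, so that the coordinate comparison of $\beta_w$ and $\beta_z$ is legitimate. Both are handled by the uniform equivalence of $\gamma$ with the Euclidean metric on the finitely many relatively compact charts, together with $d_\gamma(w,z)\le C\eta^{3/4}\to0$. This smallness lets us assume $w$ stays in a slightly enlarged chart containing $z$, after shrinking $r_0$.
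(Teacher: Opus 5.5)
Your proposal is correct and takes essentially the same route as the paper. The paper bounds the coefficient difference by $O(\eta r^{-\nu})$ from the displacement estimate in \Cref{lem:flow-basic}, plus $O(\eta^{1/2})$ from the Jacobian estimate in \Cref{lem:flow-differential}, and then uses $\eta\le r^{4\nu}$ to absorb the first term. You make explicit the bookkeeping the paper leaves implicit: the telescoping in $A-I$, the common-chart argument, and the comparison of $\gamma$-norms at $z$ and $w$.
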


\begin{proof}
In finitely many adapted charts, the coefficients of \(\beta\) and their first derivatives are uniformly
bounded. By \Cref{lem:flow-basic},
\(d_{\gamma}(\Phi^{\theta}(\eta,z),z)\leq M_{r}\eta\leq C\eta r^{-\nu}\), and by
\Cref{lem:flow-differential}, \(\norm{D\Phi_{\eta}^{\theta}(z)-I}\leq C\eta^{1/2}\). Thus the coefficient
difference between \((\Phi_{\eta}^{\theta})^{*}\beta_{\eta e^{i\theta}}\) and \(\beta_{0}\) is bounded by
\(O(\eta r^{-\nu})+O(\eta^{1/2})\). Since \(\eta\leq r^{4\nu}\), we have
\(\eta r^{-\nu}\leq \eta^{1/2}\). The result follows.
\end{proof}

\begin{proof}[Proof of \Cref{prop:quantitative-retraction}]
If \(s=0\), set \(F_{0}:=\mathrm{id}_{\mathcal{Y}_{0}^{\mathrm{reg}}}\). If \(s\neq0\), write
\(s=\eta e^{i\theta}\) and set \(r:=\abs{s}^{1/(4\nu)}\). For \(\abs{s}\) sufficiently small so that $r < r_0$ and
\(\eta=\abs{s}=r^{4\nu}\leq\delta_{r}\). Define \(F_{s}:=\Phi_{\eta}^{\theta}\) on
\(\mathcal{Y}_{0}^{\mathrm{reg}}\setminus \overline{B}_{\gamma}(\Sing(\mathcal{Y}_{0}),2r)\), which is exactly
\(\mathcal{Y}_{0}^{\mathrm{reg}}\setminus \overline{B}_{\gamma}(\Sing(\mathcal{Y}_{0}),\epsilon(s))\).

Property (1) is immediate. By \Cref{lem:flow-basic},
\[
d_{\gamma}(F_{s}(z),z)\leq M_{r}\abs{s}\leq Cr^{-\nu}\abs{s}
=
C\abs{s}^{3/4},
\]
which proves (2). Property (3) follows from \Cref{lem:pullback-form-estimate}. Applying the same lemma to
the tensor field \(\gamma\), we obtain
\[
\norm{F_{s}^{*}\gamma_{s}-\gamma_{0}}_{L^{\infty}}\leq C\abs{s}^{1/2}.
\]
Since the volume form and the Hodge star operator depend smoothly on the metric,
\[
\norm{\frac{F_{s}^{*}dv_{s}}{dv_{0}}-1}_{L^{\infty}}\leq C\abs{s}^{1/2},
\qquad
\norm{*_{F_{s}^{*}\gamma_{s}}-*_{\gamma_{0}}}_{L^{\infty}}\leq C\abs{s}^{1/2}.
\]
The $L^{\infty}$-norm is $\norm{\cdot}_{L^\infty(\mathcal{Y}_{0}^{\mathrm{reg}}\setminus \overline{B}_{\gamma}(\Sing(\mathcal{Y}_{0}),\epsilon(s)))}$. Here the volume form $dv_{s}$ is $\gamma_s^n$.

For \(\chi,\chi'\in C_{0}^{\infty}(\mathcal{Y}_{0}^{\mathrm{reg}}\setminus
\overline{B}_{\gamma}(\Sing(\mathcal{Y}_{0}),\epsilon(s)))\), change of variables gives
\[
\bigl((F_{s})_{*}\chi,(F_{s})_{*}\chi'\bigr)_{L^{2}(\mathcal{Y}_{s},\gamma_{s})}
=
\int_{\mathcal{Y}_{0}^{\mathrm{reg}}}\chi\,\overline{\chi'}\,F_{s}^{*}dv_{s}.
\]
Hence
\[
\abs{
\bigl((F_{s})_{*}\chi,(F_{s})_{*}\chi'\bigr)_{L^{2}(\mathcal{Y}_{s},\gamma_{s})}
-
(\chi,\chi')_{L^{2}(\mathcal{Y}_{0}^{\mathrm{reg}},\gamma_{0})}
}
\leq
C\abs{s}^{1/2}\norm{\chi}_{L^{2}}\norm{\chi'}_{L^{2}}.
\]
Similarly,
\[
\norm{d((F_{s})_{*}\chi)}_{L^{2}(\mathcal{Y}_{s},\gamma_{s})}^{2}
=
\int_{\mathcal{Y}_{0}^{\mathrm{reg}}}d\chi\wedge *_{F_{s}^{*}\gamma_{s}}\overline{d\chi},
\]
and therefore
\[
\abs{
\norm{d((F_{s})_{*}\chi)}_{L^{2}(\mathcal{Y}_{s},\gamma_{s})}^{2}
-
\norm{d\chi}_{L^{2}(\mathcal{Y}_{0}^{\mathrm{reg}},\gamma_{0})}^{2}
}
\leq
C\abs{s}^{1/2}\norm{d\chi}_{L^{2}(\mathcal{Y}_{0}^{\mathrm{reg}},\gamma_{0})}^{2}.
\]
The corresponding estimate for the unsquared norms follows from
\(\abs{a-b}=\abs{a^{2}-b^{2}}/(a+b)\leq \abs{a^{2}-b^{2}}/b\) when \(b>0\), and is trivial when \(b=0\).
This proves (4).

Finally, for a smooth \((1,1)\)-form \(\alpha\), change of variables and
\Cref{lem:pullback-form-estimate} applied to \(\alpha^{n}\) and \(\alpha^{n-1}\) give
\[
\abs{
\int_{\mathcal{Y}_{s}}(F_{s})_{*}\chi\,\alpha_{s}^{n}
-
\int_{\mathcal{Y}_{0}}\chi\,\alpha_{0}^{n}
}
\leq
C\abs{s}^{1/2}\norm{\chi}_{L^{1}(\mathcal{Y}_{0}^{\mathrm{reg}},\gamma_{0})},
\]
and
\[
\abs{
\int_{\mathcal{Y}_{s}}d((F_{s})_{*}\chi)\wedge d^{c}((F_{s})_{*}\chi)\wedge\alpha_{s}^{n-1}
-
\int_{\mathcal{Y}_{0}}d\chi\wedge d^{c}\chi\wedge\alpha_{0}^{n-1}
}
\leq
C\abs{s}^{1/2}\norm{d\chi}_{L^{2}(\mathcal{Y}_{0}^{\mathrm{reg}},\gamma_{0})}^{2}.
\]
This proves (5), and completes the proof.
\end{proof}

\subsection{Test functions on the central fiber}

We work under the geometric conditions in
\Cref{thm:stable-degeneration-upper-bound}. Then the unique singular fiber
$\cY_0$ is a reduced snc divisor in $\cY$. We shall construct families of
test functions supported on every irreducible component $D_i$ ($1 \leq i \leq a$) of $\cY_0$.

\begin{proposition}
    \label[proposition]{prop:test_functions_on_central}
    There exist positive constants $c_1,c_2>0$ and $\epsilon_0>0$ depending
    on the degeneration $\pi\colon \cY\to\D$ and a \Kahler metric $\gamma$ on
    $\cY$ such that, for every $0<\epsilon<\epsilon_0$, there exists a family
    of smooth functions $\chi^{(i)}_\epsilon$, $1\leq i\leq a$, satisfying:
    \begin{enumerate}
        \item For each $i$, the function $\chi^{(i)}_\epsilon$ depends on
        $\epsilon$ continuously.
        \item For $1\leq i\leq a$, we have
        $\chi^{(i)}_\epsilon\in C^\infty_0(D_i\setminus\Sing(\cY_0))$.
        \item We have $0\leq \chi^{(i)}_\epsilon\leq 1$.
        \item For $y\in D_i$, we have $\chi^{(i)}_\epsilon(y)=0$ if
        $d_\gamma(y,\Sing(\cY_0))\leq c_1\epsilon$, and
        $\chi^{(i)}_\epsilon(y)=1$ if
        $d_\gamma(y,\Sing(\cY_0))\geq c_2\sqrt{\epsilon}$.
        \item We have
        $\norm{\di\chi^{(i)}_\epsilon}_{L^2(\cY_0,\gamma)}^2
        \leq K_4/\log\epsilon^{-1}$, where $K_4>0$ is a uniform constant.
    \end{enumerate}
\end{proposition}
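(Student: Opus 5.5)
The construction is the classical logarithmic cutoff, which exploits the fact that $\Sing(\cY_0)\cap D_i$ has real codimension two in each component $D_i$. Fix a smooth cutoff $h\colon\R\to[0,1]$ with $h(x)=0$ for $x\le 0$ and $h(x)=1$ for $x\ge 1$. Let $\rho(y)$ be a smoothed version of $d_\gamma(y,\Sing(\cY_0))$ on $D_i$: a smooth function with $C^{-1}d_\gamma\le\rho\le C d_\gamma$ and $|\di\rho|_\gamma\le C$ near $\Sing(\cY_0)$. One can build it by gluing, with a partition of unity, the local functions $(|z_j|^2+\cdots)^{1/2}$ in the adapted snc charts. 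Then set
\[
\chi^{(i)}_\epsilon(y) \coloneqq h\!\left(\frac{\log(\rho(y)/\epsilon)}{\log(\epsilon^{-1/2})}\right),\qquad y\in D_i .
\]
With this choice, $\chi^{(i)}_\epsilon=0$ where $\rho\le\epsilon$ and $\chi^{(i)}_\epsilon=1$ where $\rho\ge\sqrt\epsilon$. Since $\rho\asymp d_\gamma$, properties (2)--(4) follow with suitable $c_1,c_2$, and (1) is clear because the formula depends continuously on $\epsilon$. Smoothness in $y$ holds because $\rho$ is smooth and positive on the support region.

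For (5), on the transition region $\epsilon\le\rho\le\sqrt\epsilon$ we have
\[
|\di\chi^{(i)}_\epsilon|_\gamma\le \frac{\norm{h'}_\infty}{\tfrac12\log\epsilon^{-1}}\cdot\frac{|\di\rho|_\gamma}{\rho}\lesssim\frac{1}{\rho\,\log\epsilon^{-1}} .
\]
Hence
\[
\norm{\di\chi^{(i)}_\epsilon}^2_{L^2}\lesssim\frac{1}{(\log\epsilon^{-1})^2}\int_{\{\epsilon\le\rho\le\sqrt\epsilon\}\cap D_i}\rho^{-2}\,\gamma_0^n .
\]
It then suffices to prove the volume estimate
\[
\mathrm{vol}_{\gamma_0}\bigl(\{\rho<t\}\cap D_i\bigr)\lesssim t^2 .
\]
This holds because $\Sing(\cY_0)\cap D_i$ is a compact union of smooth complex hypersurfaces of $D_i$ (real codimension $2$) meeting transversally. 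In adapted coordinates the tube $\{\rho<t\}$ is covered by finitely many sets of the form $\{|z_j|<Ct\}$, each of volume $O(t^2)$. The layer-cake (coarea) formula then gives
\[
\int_{\epsilon\le\rho\le\sqrt\epsilon}\rho^{-2}\lesssim\int_\epsilon^{\sqrt\epsilon}t^{-2}\cdot t\,dt+O(1)\lesssim\log\epsilon^{-1} .
\]
To justify this, split into dyadic shells $2^{-k-1}<\rho\le 2^{-k}$. Each shell contributes $O(1)$, and there are about $\tfrac12\log_2\epsilon^{-1}$ shells. Dividing by $(\log\epsilon^{-1})^2$ yields (5).

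The main point to check is that the smoothed distance $\rho$ exists with uniformly bounded gradient and is comparable to $d_\gamma$, and that the tube volume bound is uniform near the deeper strata. I would handle both with finitely many adapted charts on which $\gamma$ is uniformly equivalent to the Euclidean metric, and take $\epsilon_0$ small enough that $\sqrt{\epsilon_0}$ lies within the chart neighborhoods.
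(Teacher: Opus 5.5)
Your proof is correct, but it takes a different route from the paper. The paper never builds a global regularized distance. In each adapted chart meeting $\Sing(\cY_0)$ it takes $\eta_\alpha\prod_{j=1}^p\vphi_\epsilon(\abs{z_j})$, where $\vphi_\epsilon$ is exactly your one-variable function $t\mapsto h\bigl(\log(t/\epsilon)/\log(\epsilon^{-1/2})\bigr)$, and sums these over a fixed partition of unity. It gets (4) from the comparison $L_1\min_j\abs{y_j}\le d_\gamma(y,\Sing(\cY_0))\le L_2\min_j\abs{y_j}$. It gets (5) by differentiating the product directly:
\begin{itemize}
\item each factor contributes $\abs{\log\epsilon}^{-2}\int_{\epsilon<\abs{z_j}<\sqrt{\epsilon}}\abs{z_j}^{-2}\lesssim\abs{\log\epsilon}^{-1}$;
\item the $\di\eta_\alpha$ terms contribute only $O(\epsilon)$.
\end{itemize}
The product structure automatically handles the fact that $\Sing(\cY_0)\cap D_i$ is locally a union $\bigcup_j\{z_j=0\}$, since the product vanishes as soon as one factor does. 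Everything is then an explicit coordinate computation. Your version composes a single cutoff with a smoothed distance $\rho$ and reduces (5) to the tube bound $\mathrm{vol}(\{\rho<t\}\cap D_i)=O(t^2)$ plus a dyadic count. This is more conceptual and works verbatim for any compact set of real codimension two with quadratic tube volume. The price is that you must construct $\rho$.

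On that construction, one correction. If by $(\abs{z_j}^2+\cdots)^{1/2}$ you mean $(\abs{z_1}^2+\cdots+\abs{z_p}^2)^{1/2}$, that is the distance to the deepest stratum $\{z_1=\cdots=z_p=0\}$, not to $\bigcup_j\{z_j=0\}$. It is not comparable to $d_\gamma(\cdot,\Sing(\cY_0))$ when $p\ge2$: it equals $\abs{z_2}>0$ at points of $\{z_1=0,\ z_2\neq0\}\subset\Sing(\cY_0)$.

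Use instead $\rho_\alpha=\bigl(\sum_{j=1}^p\abs{z_j}^{-2}\bigr)^{-1/2}$ on charts meeting $\Sing(\cY_0)$, and $\rho_\alpha\equiv1$ on the others. This $\rho_\alpha$ is smooth off $\Sing(\cY_0)$, satisfies $p^{-1/2}\min_j\abs{z_j}\le\rho_\alpha\le\min_j\abs{z_j}$, and has $\abs{\di\rho_\alpha}\le p$. Then $\rho=\sum_\alpha\eta_\alpha\rho_\alpha$ has bounded gradient and is comparable to $d_\gamma$ near $\Sing(\cY_0)$. For this comparability you need what the paper proves when choosing $r_0$: near $\Sing(\cY_0)$ only charts meeting $\Sing(\cY_0)$ contain the point, and the local coordinate distances agree with $d_\gamma$ up to the constants $L_1,L_2$. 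With this fix, the rest of your argument goes through as written.
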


We first define a family of smooth functions on $\R_{>0}$ depending on
$0<\epsilon<1$. Set
\[
u_\epsilon(t)\coloneqq
\frac{\log(t)-\log(\epsilon)}{\log(\sqrt{\epsilon})-\log(\epsilon)}
=
\frac{\log(t)-\log(\epsilon)}{-\frac12\log(\epsilon)}.
\]
Then $u_\epsilon(t)\geq 1$ when $t\geq \sqrt{\epsilon}$,
$u_\epsilon(t)\leq 0$ when $t\leq \epsilon$, and
$u_\epsilon'(t)=-2/(t\log\epsilon)$.

Let $\underline{\eta}\in C^\infty(\R)$ be a standard bump function such that
$0\leq \underline{\eta}\leq 1$, $\underline{\eta}=0$ on $(-\infty,0]$, and
$\underline{\eta}=1$ on $[1,\infty)$. We define
$\vphi_\epsilon(t)\coloneqq(\underline{\eta}\circ u_\epsilon)(t)$. Then
$\vphi_\epsilon=0$ on $(0,\epsilon)$ and $\vphi_\epsilon=1$ on
$(\sqrt{\epsilon},\infty)$. Extending it by zero, we regard
$\vphi_\epsilon$ as a smooth function on $\R$. Moreover,
$\vphi_\epsilon'(t)\neq 0$ only when $\epsilon<t<\sqrt{\epsilon}$, and on
this interval,
\[
\vphi_\epsilon'(t)
=
\underline{\eta}'(u_\epsilon(t))\frac{-2}{t\log(\epsilon)}.
\]
Thus there exists a constant $C_1=C_1(\underline{\eta})$ such that
$\abs{\vphi_\epsilon'(t)}\leq C_1/(t\abs{\log\epsilon})$ for
$\epsilon<t<\sqrt{\epsilon}$, and $\vphi_\epsilon'(t)=0$ elsewhere.

Using the test functions $\vphi_\epsilon$ constructed above, we prove
\Cref{prop:test_functions_on_central} by working on local adapted charts and
gluing them with a partition of unity.

\begin{proof}[Proof of \Cref{prop:test_functions_on_central}]
    We fix one of the irreducible components $D_i$ of $\cY_0$. We assume it
    to be $D_1$ without loss of generality.

    For any point $y\in\cY_0$, we construct a triple of adapted coordinate
    charts around it. Precisely, we have open sets
    $U_y\Subset V_y\Subset W_y$ containing $y$ such that:
    \begin{itemize}
        \item The coordinate chart $\{z_i\}_{i=0}^n$ of $W_y$ is the
        polydisc $\D_3^{n+1}$ of radius $3$. Under this coordinate chart,
        $V_y=\D_2^{n+1}$ and $U_y=\D_1^{n+1}$. The point $y$ is the origin.
        \item The fibration $\pi$ on $W_y$ is given by
        $\pi(z_0,z_1,\cdots,z_n)=z_0z_1\cdots z_p$ for some
        $0\leq p\leq n$, where $z_0,\cdots,z_p$ are defining functions of
        irreducible components of $\cY_0$ intersecting $W_y$.
    \end{itemize}
    Since $\cY_0$ is compact, we can find finitely many such $U_y$ covering
    $\cY_0$. We denote these open sets by $\{U_\alpha\}_\alpha$.

    As $\{V_\alpha\}_\alpha\cup
    (\cY\setminus\cup_\alpha\overline{U}_\alpha)$ constitutes an open
    covering of $\cY$, let
    $\{\eta_\alpha\}_\alpha\cup\{\eta_0\}$ be a partition of unity
    subordinate to this covering. Then $\sum_\alpha\eta_\alpha=1$ on
    $\cup_\alpha U_\alpha$, and each $\eta_\alpha$ is supported in
    $V_\alpha$.

    Each chart system $U_\alpha \Subset V_\alpha
    \Subset W_\alpha$ falls into one of the following cases.
    \begin{itemize}
        \item \textbf{Case 1: When $W_\alpha\cap D_1=\emptyset$.}
        We define
        \begin{equation}
            \label{eqn:case1-contribution}
            \chi^{(1)}_{\epsilon,\alpha}\coloneqq \eta_\alpha\cdot 0=0.
        \end{equation}
        Then $\chi^{(1)}_{\epsilon,\alpha}\mid_{\cY_0}=0$ is a smooth
        function on $\cY_0^{\rm reg}$.

        \item \textbf{Case 2: When $W_\alpha\cap D_1\neq\emptyset$ and
        $W_\alpha\cap\Sing(\cY_0)=\emptyset$.}
        We define
        \begin{equation}
            \label{eqn:case2-contribution}
            \chi^{(1)}_{\epsilon,\alpha}\coloneqq \eta_\alpha\cdot 1
            =\eta_\alpha.
        \end{equation}
        Then $\chi^{(1)}_{\epsilon,\alpha}\mid_{\cY_0}$ is a smooth
        function on $\cY_0^{\rm reg}$, since $\cY_0$ is smooth on
        $V_\alpha$.

        \item \textbf{Case 3: When $W_\alpha\cap D_1\neq\emptyset$ and
        $W_\alpha\cap\Sing(\cY_0)\neq\emptyset$.}
        After reindexing, we assume that
        $\{z_0=0\}\cap W_\alpha=D_1\cap W_\alpha$ and that
        $\pi(z)=z_0z_1\cdots z_p$ on $W_\alpha$ for some $1\leq p\leq n$.
        Then
        $\Sing(\cY_0)\cap W_\alpha
        \subset(\{z_1=0\}\cup\cdots\cup\{z_p=0\})\cap W_\alpha$ on
        $D_1$. We define
        \begin{equation}
            \label{eqn:case3-contribution}
            \chi^{(1)}_{\epsilon,\alpha}
            \coloneqq
            \eta_\alpha\prod_{i=1}^p\vphi_\epsilon(\abs{z_i}).
        \end{equation}
        Then $\chi^{(1)}_{\epsilon,\alpha}$ is a smooth function supported
        in $V_\alpha$. By construction of $\vphi_\epsilon$, it is zero if
        $\abs{z_i}\leq\epsilon$ for some $1\leq i\leq p$. Hence it is zero
        around $\Sing(\cY_0)\cap W_\alpha$, and its restriction to
        $\cY_0$ is smooth on $\cY_0^{\rm reg}$.
    \end{itemize}
    Finally, we define
    \begin{equation}
        \label{eqn:final-test-by-adding-all}
        \chi^{(1)}_\epsilon
        \coloneqq
        \sum_\alpha\chi^{(1)}_{\epsilon,\alpha}\mid_{\cY_0}.
    \end{equation}
    We now verify the five conditions in
    \Cref{prop:test_functions_on_central}.

    Since $\vphi_\epsilon$ depends on $\epsilon$ continuously, so does
    $\chi^{(1)}_\epsilon$. \textbf{This verifies (1).}

    \textbf{We verify that $\chi^{(1)}_\epsilon$ is supported on $D_1$.}
    If $y\in\cY_0\setminus D_1$, then every local contribution vanishes.
    In \textbf{Case 1} this is immediate. In \textbf{Case 2}, the set
    $V_\alpha\cap\cY_0$ lies in the smooth component $D_1$, so
    $\eta_\alpha(y)=0$. In \textbf{Case 3}, since $y\in\cY_0\setminus D_1$,
    we have $z_i(y)=0$ for some $1\leq i\leq p$; thus the product in
    \cref{eqn:case3-contribution} vanishes. Therefore
    $\chi^{(1)}_\epsilon=0$ on $\cY_0\setminus D_1$. From the construction,
    it is also zero around $\Sing(\cY_0)$. Hence
    $\chi^{(1)}_\epsilon\in C^\infty_0(D_1\setminus\Sing(\cY_0))$.
    \textbf{This verifies (2).}

    By construction, every local factor lies between $0$ and $1$, and
    $\sum_\alpha\eta_\alpha=1$ on $\cY_0$. Hence
    $0\leq\chi^{(1)}_\epsilon\leq 1$ on $\cY_0$.
    \textbf{This verifies (3).}

    We now determine the constants for the distance property. Let
    $0<L_1<1<L_2$ be constants such that
    $L_1^2\omega_{\rm Euc}\leq\gamma\mid_{W_\alpha}\leq
    L_2^2\omega_{\rm Euc}$ on every $W_\alpha$, where
    $\omega_{\rm Euc}=\sum_{i=0}^n\sqrt{-1}\di z_i\wedge\di\bar z_i$.
    We set $c_1=L_1$ and $c_2=L_2$.

    We choose $0<r_0<e^{-1}$ as in
    \Cref{lem:existence-r0-for-central-cutoff}. Thus, if
    $y\in D_1$ and $d_\gamma(y,\Sing(\cY_0))<r_0$, then every
    $V_\alpha$ with $y\in V_\alpha$ is of \textbf{Case 3}. Moreover,
    for such a chart,
    \begin{equation}
        \label{eqn:distance-mu-comparison}
        \begin{aligned}
        &L_1\min_{1\leq i\leq p}\abs{y_i}
        \leq d_\gamma(y,\Sing(\cY_0))
        \quad\text{if }d_\gamma(y,\Sing(\cY_0))<r_0,\\
        &d_\gamma(y,\Sing(\cY_0))
        \leq L_2\min_{1\leq i\leq p}\abs{y_i}.
        \end{aligned}
    \end{equation}
    We take $\epsilon_0=(r_0/c_2)^2$. Since $r_0<e^{-1}$ and $c_2>1$, we
    have $\epsilon_0<e^{-2}$. Also, for every $0<\epsilon<\epsilon_0$, we
    have $c_1\epsilon<r_0$ and $c_2\sqrt{\epsilon}<r_0$.

    \textbf{We verify the distance property in (4).}
    If $y\in D_1$ and $d_\gamma(y,\Sing(\cY_0))\leq c_1\epsilon$, then
    $d_\gamma(y,\Sing(\cY_0))<r_0$. Hence every nonzero local contribution
    comes from \textbf{Case 3}. By \cref{eqn:distance-mu-comparison},
    $L_1\min_i\abs{y_i}\leq d_\gamma(y,\Sing(\cY_0))\leq c_1\epsilon$.
    Since $c_1=L_1$, we get $\min_i\abs{y_i}\leq\epsilon$, and so
    $\chi^{(1)}_{\epsilon,\alpha}(y)=0$ for every such $\alpha$.
    Therefore $\chi^{(1)}_\epsilon(y)=0$.

    Conversely, if $y\in D_1$ and
    $d_\gamma(y,\Sing(\cY_0))\geq c_2\sqrt{\epsilon}$, then \textbf{Case 1}
    does not contribute. In \textbf{Case 2}, the local factor is identically
    $1$. In \textbf{Case 3}, \cref{eqn:distance-mu-comparison} gives
    $L_2\min_i\abs{y_i}\geq d_\gamma(y,\Sing(\cY_0))\geq
    c_2\sqrt{\epsilon}$. Since $c_2=L_2$, we get
    $\min_i\abs{y_i}\geq\sqrt{\epsilon}$. Thus
    $\chi^{(1)}_{\epsilon,\alpha}(y)=\eta_\alpha(y)$ in every contributing
    chart, and summing over $\alpha$ gives $\chi^{(1)}_\epsilon(y)=1$.
    \textbf{This verifies (4).}

    \textbf{We verify the gradient estimate in (5).}
    By the distance property, $\di\chi^{(1)}_\epsilon$ is supported in
    $A_\epsilon\coloneqq D_1\cap
    \{c_1\epsilon\leq d_\gamma(y,\Sing(\cY_0))\leq c_2\sqrt{\epsilon}\}$.
    Since $c_2\sqrt{\epsilon}<r_0$, only charts of \textbf{Case 3} occur on
    $A_\epsilon$. Hence, on $A_\epsilon$,
    \[
    \di\chi^{(1)}_\epsilon
    =
    \sum_{\substack{\alpha\text{ in}\\\textbf{Case 3}}}
    \di\left(\eta_\alpha\prod_{i=1}^p\vphi_\epsilon(\abs{z_i})\right).
    \]
    Write $P_{\epsilon,\alpha}\coloneqq
    \prod_{i=1}^p\vphi_\epsilon(\abs{z_i})$. Then
    $\di(\eta_\alpha P_{\epsilon,\alpha})
    =P_{\epsilon,\alpha}\di\eta_\alpha+\eta_\alpha\di P_{\epsilon,\alpha}$.
    By finite multiplicity of the covering,
    \[
    \begin{aligned}
        \norm{\di\chi^{(1)}_\epsilon}_{L^2(\cY_0,\gamma)}^2
        &\leq
        C\int_{A_\epsilon}
        \abs{\sum_{\alpha}P_{\epsilon,\alpha}\di\eta_\alpha}_\gamma^2
        \,dV_\gamma  \\
        &\quad+
        C\sum_{\substack{\alpha\text{ in}\\\textbf{Case 3}}}
        \int_{D_1\cap V_\alpha}\abs{\di P_{\epsilon,\alpha}}_\gamma^2
        \,dV_\gamma .
    \end{aligned}
    \]
    The first term is bounded by $C\epsilon$, because $A_\epsilon$ is
    contained in the $c_2\sqrt{\epsilon}$-neighborhood of
    $\Sing(\cY_0)\cap D_1$, this neighborhood has volume $O(\epsilon)$ in
    $D_1$, and the functions $\eta_\alpha$ are fixed.

    For the second term, the estimate for $\vphi_\epsilon'$ gives, on each
    \textbf{Case 3} chart,
    \[
        \abs{\di P_{\epsilon,\alpha}}_\gamma^2
        \leq
        \frac{C}{\abs{\log\epsilon}^2}
        \sum_{i=1}^p
        \frac{\mathbf{1}_{\{\epsilon<\abs{z_i}<\sqrt{\epsilon}\}}}
        {\abs{z_i}^2}.
    \]
    Using the equivalence between $\gamma$ and the Euclidean metric, and
    integrating over the remaining bounded coordinates, we obtain
    \[
        \int_{D_1\cap V_\alpha}
        \abs{\di P_{\epsilon,\alpha}}_\gamma^2\,dV_\gamma
        \leq
        \frac{C}{\abs{\log\epsilon}^2}
        \sum_{i=1}^p\int_\epsilon^{\sqrt{\epsilon}}\frac{dr}{r}
        \leq
        \frac{C}{\log\epsilon^{-1}}.
    \]
    Since the number of charts is finite and $\epsilon_0<e^{-2}$, the
    harmless $C\epsilon$ term is also bounded by $C/\log\epsilon^{-1}$.
    Therefore
    \[
        \norm{\di\chi^{(1)}_\epsilon}_{L^2(\cY_0,\gamma)}^2
        \leq
        \frac{K_4}{\log\epsilon^{-1}},
    \]
    where $K_4$ depends only on the finite adapted covering, the partition
    of unity, the fixed bump function $\underline{\eta}$, and the metric
    comparison constants for $\gamma$. \textbf{This verifies (5).}

    The construction for the other irreducible components $D_i$ is the same
    after reindexing the adapted coordinates. This completes the proof.
\end{proof}

Lastly, we prove the following lemma used above.
\begin{lemma}
    \label[lemma]{lem:existence-r0-for-central-cutoff}
    With the finite adapted covering fixed above, there exists
    $0<r_0<e^{-1}$ such that the following holds. If $y\in D_1$ and
    $d_\gamma(y,\Sing(\cY_0))<r_0$, then every $V_\alpha$ with
    $y\in V_\alpha$ is of \textbf{Case 3}. Moreover, for every such chart,
    the distance comparison \cref{eqn:distance-mu-comparison} holds.
\end{lemma}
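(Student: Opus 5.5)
The proof is a compactness and Lebesgue-number argument on the finite adapted covering $\{U_\alpha\}$ with $U_\alpha\Subset V_\alpha\Subset W_\alpha$, followed by an explicit computation of distances in polydisc coordinates.

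\textbf{Step 1: ruling out Cases 1 and 2.} Let $y\in D_1\cap V_\alpha$ with $d_\gamma(y,\Sing(\cY_0))<r_0$. Since $y\in W_\alpha\cap D_1$, Case 1 is impossible. For Case 2, I would use that only finitely many $\alpha$ have $W_\alpha\cap\Sing(\cY_0)=\emptyset$. For each such $\alpha$, the set $\overline{V}_\alpha$ is compact and disjoint from the closed set $\Sing(\cY_0)$, because $\overline{V}_\alpha\subset W_\alpha$. Hence $d_\gamma(\overline{V}_\alpha,\Sing(\cY_0))\ge\kappa_\alpha>0$. Choosing $r_0<\min_\alpha\kappa_\alpha$ excludes Case 2.

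\textbf{Step 2: the two distance comparisons in a Case 3 chart.} Fix such a chart, with $D_1=\{z_0=0\}$ and $\Sing(\cY_0)\cap W_\alpha\cap D_1=\bigcup_{i\ge1}\{z_0=z_i=0\}$. For $y\in D_1\cap V_\alpha$ we have $y_0=0$ and all coordinates satisfy $|y_j|<2$.

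For the upper bound, let $i$ minimize $|y_i|$ and move along the straight segment in the $z_i$-direction to the point with $z_i=0$. This segment stays inside $V_\alpha\subset W_\alpha$ and ends in $\Sing(\cY_0)$. Its $\gamma$-length is at most $L_2|y_i|$, so $d_\gamma(y,\Sing)\le L_2\min_i|y_i|$.

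For the lower bound, take $q\in\Sing(\cY_0)$ realizing the distance, so $d_\gamma(y,q)<r_0$, and a near-minimal $\gamma$-path from $y$ to $q$. I would choose $r_0<L_1\cdot 1$, where $1$ is the Euclidean distance from $\partial W_\alpha$ to $\overline{V}_\alpha$. Then such a path has Euclidean length less than $r_0/L_1<1$, so it stays inside $W_\alpha$ and $q\in W_\alpha$. Since $q\in\Sing(\cY_0)\cap W_\alpha$, two of its coordinates $z_0,\dots,z_p$ vanish, and in particular $q_j=0$ for some $j\ge1$. Therefore
\[
d_\gamma(y,q)\ge L_1|y-q|_{\rm Euc}\ge L_1|y_j|\ge L_1\min_{1\le i\le p}|y_i|.
\]

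\textbf{Step 3: choosing $r_0$.} Set $r_0=\min\bigl(e^{-1}/2,\ \tfrac12\min_\alpha\kappa_\alpha,\ L_1/2\bigr)$. This single choice makes Steps 1 and 2 hold simultaneously for every chart.

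\textbf{Main obstacle.} The delicate point is the lower bound in Step 2: one has to make sure that both the nearest singular point and the near-minimizing path stay inside $W_\alpha$, so that the Euclidean comparison $\gamma\ge L_1^2\omega_{\rm Euc}$ and the coordinate description of $\Sing(\cY_0)$ actually apply. This is handled by the nesting $V_\alpha\Subset W_\alpha$ with a Euclidean margin of $1$ together with the smallness $r_0<L_1$. Strict transversality of the snc divisor is not needed here, because $\Sing(\cY_0)\cap W_\alpha$ is explicitly a union of coordinate subspaces.
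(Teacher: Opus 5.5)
Your proof is correct and follows essentially the paper's argument. A positive margin coming from $V_\alpha\Subset W_\alpha$ rules out Case~2 charts and keeps the nearest point of $\Sing(\cY_0)$, together with near-minimizing paths, inside $W_\alpha$; both bounds then follow from the coordinate description $\Sing(\cY_0)\cap D_1\cap W_\alpha=\bigcup_{i=1}^p\{z_0=z_i=0\}$ and the comparison $L_1^2\omega_{\rm Euc}\le\gamma\le L_2^2\omega_{\rm Euc}$. The only difference is bookkeeping. The paper uses the single constant $\delta_{\rm bd}=\min_\alpha d_\gamma(\overline{V}_\alpha,\cY\setminus W_\alpha)$ for both purposes, while you use $\kappa_\alpha$ and the Euclidean margin $1$ (times $L_1$) separately. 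Your ``the path stays inside $W_\alpha$'' step is best phrased as a first-exit-time argument, since Euclidean length is only defined while the path is in the chart.
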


\begin{proof}
    Since charts of \textbf{Case 1} do not meet $D_1$, it suffices to
    exclude charts of \textbf{Case 2}.

    Since $V_\alpha\Subset W_\alpha$ and the family of charts is finite, the
    number
    \[
        \delta_{\rm bd}\coloneqq
        \min_\alpha d_\gamma(\overline{V}_\alpha,\cY\setminus W_\alpha)
    \]
    is positive. Choose
    \(
        0<r_0<\min\{e^{-1},\delta_{\rm bd}\}.
    \)

    If $y\in D_1$ and $d_\gamma(y,\Sing(\cY_0))<r_0$, then no chart of
    \textbf{Case 2} can contain $y$. Indeed, if $y\in V_\alpha$ for some
    $\alpha$ in \textbf{Case 2}, then
    $\Sing(\cY_0)\subset \cY\setminus W_\alpha$, and therefore
    \[
        d_\gamma(y,\Sing(\cY_0))
        \ge d_\gamma(y,\cY\setminus W_\alpha)
        \ge d_\gamma(\overline{V}_\alpha,\cY\setminus W_\alpha)
        \ge \delta_{\rm bd}
        > r_0,
    \]
    a contradiction. Hence every $V_\alpha$ with $y\in V_\alpha$ is of
    \textbf{Case 3}.

    It remains to prove the distance comparison in
    \cref{eqn:distance-mu-comparison}. Suppose $y\in D_1\cap V_\alpha$ and
    $V_\alpha$ is of \textbf{Case 3}. Since
    $d_\gamma(y,\Sing(\cY_0))<r_0\leq\delta_{\rm bd}$, every closest point of
    $\Sing(\cY_0)$ to $y$ lies in $W_\alpha$. Thus the distance is computed
    inside $W_\alpha$. In this chart, $D_1=\{z_0=0\}$ and
    $\Sing(\cY_0)\cap D_1\cap W_\alpha
    =\cup_{i=1}^p\{z_0=z_i=0\}$. The other local strata of
    $\Sing(\cY_0)$ are no closer to $y=(0,y_1,\cdots,y_n)$. Therefore the
    Euclidean distance from $y$ to the local singular locus is
    $\min_{1\leq i\leq p}\abs{y_i}$.

    Since $L_1^2\omega_{\rm Euc}\leq\gamma\leq
    L_2^2\omega_{\rm Euc}$ on $W_\alpha$, the metric distance satisfies
    $L_1\min_i\abs{y_i}\leq d_\gamma(y,\Sing(\cY_0))$ whenever
    $d_\gamma(y,\Sing(\cY_0))<r_0$. Conversely, joining $y$ to the local
    singular locus along a coordinate line gives
    $d_\gamma(y,\Sing(\cY_0))\leq L_2\min_i\abs{y_i}$. This proves the
    lemma.
\end{proof}

\subsection{Proof of 
\mathToString{\Cref{thm:stable-degeneration-upper-bound}}}
We prove \Cref{thm:stable-degeneration-upper-bound} by constructing test functions on $\cY_s$ by
flowing test functions on the central fiber
(\Cref{prop:test_functions_on_central})
to nearby $\cY_s$ using \Cref{prop:quantitative-retraction}.

We work under the notations in \Cref{thm:stable-degeneration-upper-bound} and $\beta$
is the semi-positive $(1, 1)$-form therein.
We write  
\[
\mathcal{Y}_{0} = \sum_{i=1}^{N} D_{i} + \sum_{i=N+1}^{a} D_{i},
\]
where $D_1, \cdots, D_N$ are exactly the irreducible components
of $\cY_0$ such that $\int_{D_i} \beta^n > 0$.

For each $1 \leq i \leq N$, under the notations in \Cref{prop:test_functions_on_central}, we define
\[
\chi^{(i)}_s \coloneqq \chi^{(i)}_{2\epsilon(s)/c_1},
\]
where $\epsilon(s) = 2 \abs{s}^{\frac{1}{4\nu}}$. Since 
$\chi^{(i)}_s \in C_{0}^{\infty}(D_i \setminus \overline{B}_{\gamma}(\Sing(\mathcal{Y}_{0}), \epsilon(s)))$ by \Cref{prop:test_functions_on_central}, we
define 
\[
\tilde{u}_{i, s} \coloneqq (F_s)_* \chi^{(i)}_s \in C^\infty(\cY_s),
\]
where $F_s$ is the diffeomorphism introduced in \Cref{prop:quantitative-retraction}. We shall estimate the $L^2$-norms
of $\tilde{u}_{i, s}$ and its derivative on $(\cY_s, \beta_s)$.

Using \Cref{prop:quantitative-retraction}(5), we have 
\[
\int_{\cY_s} \tilde{u}_{i, s}^2 \beta_s^n = \int_{\cY_0} (\chi^{(i)}_s)^2 \beta_0^n 
+ O(\abs{s}^{\frac{1}{2}}) = \int_{D_i} (\chi^{(i)}_s)^2 \beta_0^n 
+ O(\abs{s}^{\frac{1}{2}}).
\]

From \Cref{prop:test_functions_on_central}(4), $\chi^{(i)}_s$
is $1$ on $D_i$ outside a neighborhood of $\Sing(\cY_0)$ of radius
$c_2 \sqrt{2\epsilon(s)/c_1}$, which has volume $O(\epsilon(s))$
in $D_i$, we have 
\begin{align*}
    \int_{D_i} (\chi^{(i)}_s)^2 \beta_0^n 
    &= \int_{D_i} \beta_0^n - \int_{D_i \cap d_\gamma(y,\Sing(\cY_0))\leq c_2\sqrt{2\epsilon(s)/c_1}} (1 - (\chi^{(i)}_s)^2) \beta_0^n
    \\
    &= \int_{D_i} \beta_0^n + O(\epsilon(s)).
\end{align*}
Thus $\norm{\tilde{u}_{i, s}}^2_{L^2(\cY_s, \beta_s)} = \int_{D_i} \beta_0^n + O(\abs{s}^{\frac{1}{4\nu}})$.

For the derivative, using \Cref{prop:quantitative-retraction}(5),
\[
\int_{\cY_s} \norm{\di \tilde{u}_{i, s}}_{\beta_s}^2 \beta_s^n
= n\int_{\cY_0} \di \chi^{(i)}_s \wedge \di^c \chi^{(i)}_s \wedge\beta_0^{n - 1} + O(\abs{s}^{\frac{1}{2}}),
\]
we recall that $\beta_s$ is a \Kahler form on the regular fiber 
$\cY_s$.

Using \Cref{prop:test_functions_on_central}(5) and choosing 
$\gamma$ to be a background \Kahler that dominates $\beta$, we have 
\[
n\int_{\cY_0} \di \chi^{(i)}_s \wedge \di^c \chi^{(i)}_s \wedge\beta_0^{n - 1} = O(1/\log(\epsilon(s)^{-1}))
= O\left({\log^{-1}(\abs{s}^{-1})}\right).
\]
Thus $\norm{\di \tilde{u}_{i, s}}^2_{L^2(\cY_s, \beta_s)} = O\left({\log^{-1}(\abs{s}^{-1})}\right)$.

For each $1 \leq i \leq N$, we have $\int_{D_i} \beta_0^n > 0$.
So $\norm{\tilde{u}_{i, s}}^2_{L^2(\cY_s, \beta_s)} = \int_{D_i} \beta_0^n + O(\abs{s}^{\frac{1}{4\nu}}) > 0$ for 
$s$ small enough.
Then we define 
\[
{u}_{i, s}= \frac{\tilde{u}_{i, s}}{\norm{\tilde{u}_{i, s}}_{L^2(\cY_s, \beta_s)}}.
\]
Note that ${u}_{i, s}$ is supported on $F_s(D_i \setminus \overline{B}_{\gamma}(\Sing(\mathcal{Y}_{0}), \epsilon(s)))$. These supports are mutually disjoint.
Thus $\{u_{i, s}\}_{i=1}^N$
is an orthonormal system in $L^2(\cY_s, \beta_s)$.

By the min-max principle, for $\lambda_k(s)$ the $k$-th
non-zero eigenvalue of the Laplacian on $(\cY_s, \beta_s)$,
we have 
\[
\lambda_k(s) = \min_{\substack{V \subset C^\infty(\cY_s)\\
\dim V = k + 1}} \max_{\substack{u \in V, \norm{u}= 1}} \norm{\di u}_{L^2(\cY_s, \beta_s)}^2.
\]

For $1 \leq k \leq N - 1$, we let the $k+1$-dimensional vector
space $V$ appearing in the min-max formula be the space 
spanned by ${u}_{1, s}, \cdots, u_{k+1, s}$. Note that 
\[
\norm{\di {u}_{i, s}}^2_{L^2(\cY_s, \beta_s)}
= \frac{O\left({\log^{-1}(\abs{s}^{-1})}\right)}{\int_{D_i} \beta_0^n + O(\abs{s}^{\frac{1}{4\nu}})} = O\left({\log^{-1}(\abs{s}^{-1})}\right), \quad 
1 \leq i \leq N.
\]
The supports of ${u}_{1, s}, \cdots, u_{k+1, s}$ are mutually disjoint.

We have 
\[
\lambda_k(s) = O\left({\log^{-1}(\abs{s}^{-1})}\right)
\]
for $1 \leq k \leq N -1$. This completes the proof of \Cref{thm:stable-degeneration-upper-bound}.

\section{Applications and Discussions}
\subsection{Applications in geometric analysis}
\label[subsection]{sec:applications-subsection}
Let $\pi \colon (X, \omega_X) \to \D_s$ be a degeneration of \Kahler manifolds of complex dimension $n$. Assume $X_0$ is the unique singular fiber. Let $\omega_s \eqqcolon \omega_X|_{X_s}$ denote the restricted metric on the smooth fibers $X_s$.

When $X_0$ is reduced and irreducible, 
the first non-zero eigenvalue of the Laplacian 
on $(X_s, \omega_s)$ is uniformly bounded away from $0$ as $s \to 0$.
Consequently, there exists a uniform constant $C > 0$ such that the 
following $L^2$-Poincaré inequality holds for all 
$s \in \D^\circ_{\frac{1}{2}}$ and all $f \in C^{\infty}(X_s)$:
\begin{equation}
    \label{eqn:uniform_poincare}
    \norm{f - \frac{\int_{X_s} f \omega_s^n}{\int_{X_s} \omega_s^n}}_{L^2(X_s, \omega_s)} \leq C \norm{\di f}_{L^2(X_s, \omega_s)}.
\end{equation}

This uniform inequality is widely utilized in geometric settings 
involving the degeneration of complex manifolds (see, for example, 
\cite{Ruan-Zhang2011}*{Proposition~3.2},  
\cite{CGP2021}*{Appendix}, \cite{DNGG2022}*
{Proposition~3.10}, \cite{Pan2022}*{Proposition~2.2}
and \cite{Filip-Tosatti21}*{Theorem 3.2.4}).

When $X_0$ is not irreducible, it is well known that the uniform Poincaré 
constant in \Cref{eqn:uniform_poincare} blows up as $s \to 0$. 
However, \Cref{thm:main-thm-combined} ensures that this rate of 
blow-up is strictly controlled. This control ultimately yields the 
following lower bound for the Green function $G_s(x, y)$ on the 
fibers $(X_s, \omega_s)$, which remains valid even for general 
singular fibers:
\begin{proposition}
    \label[proposition]{prop:green-function-general-sing}
    Let $G_s(x, y)$ be the Green function of the \Kahler manifold
    $(X_s, \omega_s)$ for $s \neq 0$. Then there exists a constant $C > 0$ such that 
    \[
    G_s(x, y) \geq -C \abs{\log \abs{s}}
    \]
    for all 
    $s \in \D^{\circ}_{\frac{1}{2}}$.
\end{proposition}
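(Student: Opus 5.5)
Our plan is to rescale the fiber metric and apply the Guedj--T\^o machinery to the rescaled family $\tilde{\beta}_s=\omega_s/\abs{\log\abs{s}}$, which lies in $\cK_{\rm Skoda}$ by \Cref{prop:rescaled-in-the-class}. We normalize the Green function by $\int_{X_s}G_s(x,\cdot)\,\omega_s^n=0$ and $\Delta_{\omega_s,y}G_s(x,y)=\delta_x-1/V$, where $V=\int_{X_s}\omega_s^n$ is constant in $s$. Under the rescaling $\Delta_{\tilde\beta_s}=\abs{\log\abs{s}}\Delta_{\omega_s}$ and $\tilde\beta_s^n=\abs{\log\abs{s}}^{-n}\omega_s^n$, the Green function $\tilde G_s$ of $(X_s,\tilde\beta_s)$ satisfies $G_s=\abs{\log\abs{s}}^{-(n-1)}\tilde G_s$ up to the chosen normalization. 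It is therefore cleaner not to rescale $G_s$ itself. Instead we use the duality characterization: a lower bound $G_s\ge -A$ is equivalent to the statement that every solution of $\Delta_{\omega_s}u=f-\frac{1}{V}\int f\omega_s^n$ with $f\ge0$, $\int f\omega_s^n=V$ and $\int u\,\omega_s^n=0$ satisfies $u\ge -A$ (up to a factor $V$). We can take $u(x)=\int G_s(x,y)f(y)\,\omega_s^n(y)$ and let $f$ approximate a Dirac mass.

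The key step is to obtain a lower bound for such $u$. We follow \cite{Guedj-To2025}*{Section~2}, where a one-sided Green function lower bound is deduced from the sup estimate (the lemma preceding \Cref{prop:Laplacian-estimate-by-skoda}). Writing $\Delta_{\omega_s}u\le f$ weakly and $\Delta_{\omega_s}u\ge -1$, the function $v\coloneqq -u$ satisfies $\Delta_{\omega_s}v\ge -f\cdot(\ldots)$, which is not bounded. Instead we use $\Delta_{\omega_s}(-u)=1-f\le 1$, i.e.\ $-u$ is quasi-subharmonic in the sense that $\Delta_{\omega_s}(-u)\ge -1$ fails, whereas $\Delta_{\omega_s}u\ge -1$ holds. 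Passing to $\tilde\beta_s$, we get $\Delta_{\tilde\beta_s}u\ge -\abs{\log\abs{s}}$, so $\abs{\log\abs{s}}^{-1}u$ is quasi-subharmonic with constant $a=1$. The lemma then gives
\[
\sup_{X_s}u\le L\Bigl(\abs{\log\abs{s}}+\tfrac{1}{V}\int_{X_s}\abs{u}\,\omega_s^n\Bigr).
\]
This bounds $\sup_y G_s(x,y)$ from above, which is the wrong direction. We therefore apply it to the symmetric quantity: by symmetry $G_s(x,y)=G_s(y,x)$ and the mean-zero normalization, $\inf_y G_s(x,y)$ is controlled once we bound $\sup$ of the function $w\coloneqq -G_s(x,\cdot)$ restricted to the complement of a neighborhood of $x$. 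The cleaner route is the standard one: $-G_s(x,\cdot)$ satisfies $\Delta(-G_s)=1/V$ away from $x$ and is superharmonic across $x$. Concretely, $\Delta_{\omega_s}(-G_s(x,\cdot))=1/V-\delta_x\le 1/V$, which means $\operatorname{tr}_{\omega_s}\ddc(-G_s)\ge -1/V$. Hence $-G_s(x,\cdot)$ is $\omega_s$-quasi-subharmonic with $a=1/V$, and after rescaling, $\Delta_{\tilde\beta_s}(-G_s)\ge -\abs{\log\abs{s}}/V$. The lemma yields
\[
\sup_{X_s}(-G_s(x,\cdot))\le L\Bigl(\abs{\log\abs{s}}/V+\tfrac{1}{V}\int_{X_s}\abs{G_s(x,\cdot)}\,\omega_s^n\Bigr),
\]
using that the normalized volume of $\tilde\beta_s$ coincides with that of $\omega_s$.

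The main obstacle is the uniform $L^1$ bound $\int_{X_s}\abs{G_s(x,y)}\,\omega_s^n(y)\le C\abs{\log\abs{s}}$. Since $\int G_s=0$, this equals $2\int G_s^-$, and we bound it by duality. For $h=\mathrm{sign}(G_s(x,\cdot))$ minus its mean, we have $\int G_s(x,y)h(y)\omega_s^n=u_h(x)$, where $\Delta_{\omega_s}u_h=h$ and $\int u_h=0$. Now $\abs{\Delta_{\tilde\beta_s}(u_h/(2\abs{\log\abs{s}}))}\le1$, so \Cref{prop:Laplacian-estimate-by-skoda} gives $\norm{u_h}_{L^\infty}\le 2L\abs{\log\abs{s}}$. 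This is exactly the source of the $\abs{\log\abs{s}}$ loss and is consistent with \Cref{thm:main-thm-combined}. Two technical points remain. First, $h$ is only bounded measurable, so we approximate it by smooth functions and pass to the limit. Second, we must justify applying the lemma to $-G_s(x,\cdot)$, which is only quasi-subharmonic, by smoothing $\delta_x$, e.g.\ through the heat kernel or mollified Green functions, and passing to the limit. Combining the two displays gives $-G_s\le C\abs{\log\abs{s}}$ for $0<\abs{s}<\delta$. The range $\delta\le\abs{s}\le\frac12$ is handled by compactness and continuity of the fibers.
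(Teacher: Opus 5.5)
Your argument is correct, but it takes a genuinely different route from the paper's.

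\textbf{The paper's route.} The paper argues spectrally. It writes $G_s=G_{s,\mathrm{low}}+G_{s,\mathrm{high}}$. It bounds $G_{s,\mathrm{high}}$ below uniformly by the Cheng--Li argument, which needs the uniform Sobolev inequality and the fact (from Yoshikawa's continuity theorem) that the first non-small eigenvalue stays away from $0$. It bounds $G_{s,\mathrm{low}}$ using the lower bound $\lambda_k(s)\ge C\abs{\log\abs{s}}^{-1}$ of \Cref{thm:main-thm-combined}, together with the Moser-iteration bound \cref{eqn:eigen-infinity-norm} for the eigenfunctions.

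\textbf{Your route.} You run the Guedj--T\^o Green-function argument directly on the rescaled family $\tilde\beta_s$. You apply the sup lemma to the quasi-subharmonic function $-G_s(x,\cdot)$ with $a=\abs{\log\abs{s}}/V$. You combine this with the bound $\int_{X_s}\abs{G_s(x,\cdot)}\,\omega_s^n\le 2L\abs{\log\abs{s}}$, obtained by duality from \Cref{prop:Laplacian-estimate-by-skoda}. The scaling bookkeeping is right: the normalized volumes of $\tilde\beta_s$ and $\omega_s$ agree, and $a$ enters linearly.

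\textbf{What each buys.} Your route is more economical. It uses only \Cref{prop:rescaled-in-the-class} and the two Guedj--T\^o estimates, and needs no eigenvalue information, Cheng--Li, or Moser iteration. It in fact reverses the paper's logic: $G_s\ge -A$ together with $\int_{X_s}G_s(x,\cdot)\,\omega_s^n=0$ gives $\int_{X_s}\abs{G_s(x,\cdot)}\,\omega_s^n\le 2AV$, hence already $\lambda_1(s)\ge (2AV)^{-1}$. The paper's route gives more structure. It shows the $\abs{\log\abs{s}}$ loss comes only from the $N-1$ small eigenmodes, with the rest of $G_s$ uniformly bounded below.

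\textbf{Presentation.} When you write it up, drop the false starts and fix the sign conventions. The sup lemma must be read with $\Delta=\tr\ddc$, so the hypothesis you need, and do verify, is $\tr_{\tilde\beta_s}\ddc\bigl(-G_s(x,\cdot)\bigr)\ge -\abs{\log\abs{s}}/V$. Read that way, your abandoned first attempt already worked: the quasi-subharmonic function there is $-u$, not $u$, and its sup bound points in the right direction. Also, $-G_s(x,\cdot)$ is subharmonic across $x$, not superharmonic.
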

\begin{proof}
    Let $\{\Phi_{i}(s, z)\}_{i \geq 1}$ be eigenfunctions 
    of $\Delta_{\omega_s}$ with non-zero eigenvalues
    and normalized $L^2$-norms.
    Let $k_0$ be the number of {small eigenvalues}. 
    Then we have 
    \[
    G_s(x, y) = \underbrace{\sum_{k = 1}^{k_0}\frac{1}{\lambda_k(s)} \Phi_k(s,x) \Phi_k(s,y)}_{\eqqcolon G_{s, \mathrm{low}}(x, y)}
    + \underbrace{\sum_{k = k_0 + 1}^{\infty}\frac{1}{\lambda_k(s)} \Phi_k(s,x) \Phi_k(s,y)}_{\eqqcolon G_{s, \mathrm{high}}(x, y)}.
    \]

Since $\lambda_{k_0 + 1}(s)$ is not a {small eigenvalue}, it 
    is uniformly bounded away from $0$. Consequently, a classical 
    argument by Cheng and Li \cite{Cheng_Li1981} ensures that 
    $G_{s, \mathrm{high}}(x, y) \geq -C_1$ for some uniform constant 
    $C_1 > 0$. (See also 
    \cite{Cao2026}*{Proposition~3.6}, noting that Theorem~3.7 used therein can be replaced by the higher-dimensional version in \cite{CKS87}*{Theorem~2.1}.)

    For the remaining terms, \Cref{thm:main-thm-combined} provides control over the 
    small eigenvalues. Furthermore, uniform Sobolev 
    inequalities combined with Moser iteration bound 
    the $L^\infty$-norms of the corresponding small 
    eigenfunctions. Indeed, from \Cref{eqn:eigen-infinity-norm}, we have 
    \[
    \norm{\Phi_k}_{L^\infty(X_s)}\leq
    \exp\left(C_{\rm Sob}
    \frac{\sqrt{\nu \lambda}}{\sqrt{\nu} - 1}\right).
    \]
    Here $\lambda > 0$ is a uniform bound 
    for small eigenvalues, i.e., 
    $0 < \lambda_1(s) \leq \cdots \leq \lambda_{k_0}(s) \leq \lambda < \infty$ for $s \in \D_{1/2}$, and $\nu = \nu(n)$ is the exponent in 
    the Sobolev inequality.
    
    Together, these bounds yield 
    \[
    G_{s, \mathrm{low}}(x, y) \geq -C_2 \abs{\log \abs{s}}
    \]
    for a uniform constant $C_2 > 0$.

    Summing these estimates yields the desired bound $G_s(x, y) \geq -C \abs{\log \abs{s}}$.
\end{proof}

The estimate of the fiberwise Green functions leads to the 
following two applications.

\subsubsection{Estimates of families of plurisubharmonic functions}

\begin{proposition}
    Let $\theta$ be a smooth $(1, 1)$-form on $X$. Then there is a 
    uniform constant $C_1 > 0$ such that for $s \in \D^{\circ}_{\frac{1}{2}}$, we have 
    \[
    C_1 \log\abs{s} \leq \frac{1}{\vol(\omega_s)}\int_{X_s} \vphi \omega_s^n  - \sup_{X_s} \vphi \leq 0
    \]
    for all $\vphi \in \PSH(X_s, \theta_s)$. Here $\vol(\omega_s) = \int_{X_s}\omega_s^n$ is a positive constant 
    determined by the cohomology class of $\omega_X$.
\end{proposition}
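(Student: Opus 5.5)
The plan is to use the Green representation formula on $(X_s,\omega_s)$ together with the Green function lower bound of \Cref{prop:green-function-general-sing}. The upper inequality $\frac{1}{\vol(\omega_s)}\int_{X_s}\vphi\,\omega_s^n-\sup_{X_s}\vphi\le 0$ is trivial. For the lower one, we may assume by approximation (Demailly regularization on the compact \Kahler manifold $X_s$, or simply working with smooth $\vphi$ and passing to decreasing limits) that $\vphi$ is smooth, with uniform control on the loss of positivity. We then normalize $G_s$ so that $\int_{X_s}G_s(x,\cdot)\,\omega_s^n=0$. With the positive analyst's Laplacian convention we have, for every $x\in X_s$,
\[
\vphi(x)=\frac{1}{\vol(\omega_s)}\int_{X_s}\vphi\,\omega_s^n+\int_{X_s}G_s(x,y)\,\Delta_{\omega_s}\vphi(y)\,\omega_s^n(y),
\]
up to the normalization constant built into $G_s$.

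The next step uses $\theta_s+\ddc\vphi\ge0$. Since $\theta$ is a fixed smooth form and $\omega_X$ is \Kahler on the total space, after shrinking to $\abs{s}\le\frac12$ there is $A>0$ with $\theta\le A\,\omega_X$. Hence $-\Delta_{\omega_s}\vphi=\operatorname{tr}_{\omega_s}\ddc\vphi\ge-\operatorname{tr}_{\omega_s}\theta_s\ge -nA$, that is, $\Delta_{\omega_s}\vphi\le nA$. Because $G_s$ is not positive, we shift it: set $\tilde G_s\coloneqq G_s+C\abs{\log\abs{s}}\ge0$ by \Cref{prop:green-function-general-sing}. Since $\int_{X_s}\Delta_{\omega_s}\vphi\,\omega_s^n=0$, replacing $G_s$ by $\tilde G_s$ leaves the representation formula unchanged. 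Therefore
\[
\vphi(x)-\frac{1}{\vol(\omega_s)}\int_{X_s}\vphi\,\omega_s^n=\int_{X_s}\tilde G_s(x,y)\,\Delta_{\omega_s}\vphi(y)\,\omega_s^n\le nA\int_{X_s}\tilde G_s(x,y)\,\omega_s^n(y)=nA\,C\abs{\log\abs{s}}\,\vol(\omega_s).
\]

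Taking the supremum over $x$ then gives
\[
\sup_{X_s}\vphi-\frac{1}{\vol(\omega_s)}\int_{X_s}\vphi\,\omega_s^n\le C'\abs{\log\abs{s}},
\]
with $\vol(\omega_s)$ a cohomological constant. This is the claim with $C_1=C'$, using $\log\abs{s}=-\abs{\log\abs{s}}$.

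The main point needing care is the justification for singular $\vphi\in\PSH(X_s,\theta_s)$, because $\theta_s$ need not be positive. Two routes are available. One is to apply the formula to the smooth approximants given by Demailly regularization, which lie in $\PSH(X_s,\theta_s+\epsilon\omega_s)$; the error in $A$ is then only $\epsilon$, and both sides converge by monotone convergence. The other is to interpret $\Delta_{\omega_s}\vphi$ as a measure, namely $nA\,\omega_s^n$ minus a positive measure, and integrate the nonnegative kernel $\tilde G_s$ against it, which is legitimate since $\tilde G_s(x,\cdot)\ge0$ is lower semicontinuous. Either route keeps the constants independent of $s$.
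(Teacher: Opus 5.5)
Your proposal is correct and follows essentially the same route as the paper: dominate $\theta$ by $A\omega_X$ to get the one-sided bound $\Delta_{\omega_s}\vphi\le nA$, then shift the Green function by $C\abs{\log\abs{s}}$ using \Cref{prop:green-function-general-sing}, which turns it into a nonnegative kernel with fiber integral $C\abs{\log\abs{s}}\vol(\omega_s)$. One minor correction concerns your first regularization route. In general you cannot approximate $\theta_s$-psh functions by smooth $(\theta_s+\epsilon\omega_s)$-psh functions with $\epsilon\to0$; for closed $\theta_s$ this would force $[\theta_s]$ to be nef. However, you only use $\theta_s\le A\omega_s$, so you can regularize inside $\PSH(X_s,A\omega_s)$, where regularization in a K\"ahler class holds without loss. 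With that change both of your routes go through, and they are more careful than the paper, which applies the Green formula to $\vphi$ directly.
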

\begin{proof}
    Since $\pi^{-1}(\overline{\D_{3/4}})$ is compact, we take 
    a large constant $L > 0$ such that $\theta < L \omega_X$
    on $\pi^{-1}(\overline{\D_{3/4}})$. Then for $s \in \D^{\circ}_{1/2}$, we have $\PSH(X_s, \theta_s) \subset 
    \PSH(X_s, L\omega_s)$. So without loss of generality, 
    we assume $\theta = \omega_s$.

    Since $\omega_s + \ddc \vphi \geq 0$ for $\vphi \in \PSH(X_s, \omega_s)$, we have 
    \begin{equation}
        \label{eqn:Laplacian-of-psh}
        -\Delta_{\omega_s} \vphi = \tr_{\omega_s} (\ddc \vphi)
    = \frac{n \ddc \vphi \wedge \omega_s^{n-1}}{\omega_s^n}
    \geq -n.
    \end{equation}
    Note that we use the positive definite Laplacian. So 
    \[
    \frac{1}{\vol(\omega_s)}\int_{X_s} \vphi \omega_s^n
    - \vphi(z) = \int_{z' \in X_s} G_{\omega_s}(z, z')
    (-\Delta_{\omega_s} \vphi)(z') \omega_s^n(z').
    \]

    By \Cref{prop:green-function-general-sing}, we have 
    \begin{align*}
        &\int_{z' \in X_s} (G_{\omega_s}(z, z') + C \abs{\log \abs{s}})
        (-\Delta_{\omega_s} \vphi)(z') \omega_s^n(z') \\
        &\geq \int_{z' \in X_s} (G_{\omega_s}(z, z') + C \abs{\log \abs{s}})
        (-n) \omega_s^n(z')\\
        &(\text{Using }G_{\omega_s}(z, z') + C \abs{\log \abs{s}} \geq 0 \text{ and } \cref{eqn:Laplacian-of-psh})\\
        &\geq  \int_{z' \in X_s}C \abs{\log \abs{s}}
        (-n) \omega_s^n(z') = -nC \vol(\omega_s) \abs{\log \abs{s}}\\
        &(\text{Using }\int_{z' \in X_s}G_{\omega_s}(z, z') \omega_s^n(z') = 0 ).
    \end{align*}
    Since $\int_{z' \in X_s} C \abs{\log \abs{s}}
    (-\Delta_{\omega_s} \vphi) \omega_s^n = 0$, we have 
    \[
    \frac{1}{\vol(\omega_s)}\int_{X_s} \vphi \omega_s^n
    - \vphi(z) 
    = \int_{z' \in X_s} (G_{\omega_s}(z, z') + C \abs{\log \abs{s}})
    (-\Delta_{\omega_s} \vphi)(z') \omega_s^n(z')
    \geq C_1 \log \abs{s}.
    \]
    Here $C_1 = n \vol(\omega_s) C$.

    So we have 
    \[
    C_1 \log\abs{s} \leq \frac{1}{\vol(\omega_s)}\int_{X_s} \vphi \omega_s^n  - \sup_{X_s} \vphi \leq 0.
    \]
\end{proof}

\begin{remark}
    When the singular fiber $X_0$ is reduced and irreducible,
    the $\log\abs{s}$ factor can be removed due to 
    \cite{DNGG2022}*{Conjecture~3.1} and \cite{Ou2022}*{Corollary~4.8}.
\end{remark}

\begin{remark}
    The $\log\abs{s}$ factor is optimal. Indeed, in \cite{DNGG2022}*{Example~3.5}, they construct a family of plurisubharmonic
    functions $\vphi_s \in \PSH(X_s, \omega_s)$ so that $\sup_{X_s} \vphi_s = 0$ and 
    \[
    \int_{X_s} \vphi_s \omega_s^n \leq C \log\abs{s}
    \]
    for a constant $C > 0$.
\end{remark}

\subsubsection{Estimates of families of Poisson equations}

\begin{proposition}
    \label[proposition]{prop:poisson-degenerating}
    Let $g$ be a continuous function on $X$ 
    and suppose it satisfies the integrability condition on all smooth fibers $X_s$ ($s \neq 0$),
    \[
    \int_{X_s} g \omega_s^n = 0.
    \]
    Then for $s \in \D^{\circ}_{\frac{1}{2}}$, let $\vphi_s$ be the unique 
    solution to the Poisson equation on $X_s$, i.e.,
    \[
    \Delta_{\omega_s} \vphi_s = g\mid_{X_s}, \quad
    \int_{X_s} \vphi_s \omega_s^n = 0.
    \]
    We have a positive constant $C_2 > 0$ independent of 
    $g$ such that 
    \[
    \norm{\vphi_s}_{L^\infty(X_s)} \leq C_2 \abs{\log \abs{s}}
    \norm{g}_{L^\infty(X_s)}
    \]
    for all $s \in \D^{\circ}_{\frac{1}{2}}$.
\end{proposition}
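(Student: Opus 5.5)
The plan is to represent the solution through the fiberwise Green function. Since $\int_{X_s}\vphi_s\,\omega_s^n = 0$, for every $x\in X_s$ we have
\[
\vphi_s(x) = \int_{X_s} G_s(x,y)\,(\Delta_{\omega_s}\vphi_s)(y)\,\omega_s^n(y)
= \int_{X_s} G_s(x,y)\, g(y)\,\omega_s^n(y).
\]
This uses the convention for $G_s$ from the proof of the previous proposition: $\int_{X_s} G_s(x,y)\,\omega_s^n(y)=0$, and $G_s$ inverts the positive Laplacian on mean-zero functions. The task is therefore an $L^1$ bound for $G_s(x,\cdot)$ that is uniform in $x$:
\[
\sup_{x\in X_s}\int_{X_s}\abs{G_s(x,y)}\,\omega_s^n(y)\le C\abs{\log\abs{s}}.
\]
Once this holds, $\abs{\vphi_s(x)}\le \norm{g}_{L^\infty(X_s)}\sup_x\norm{G_s(x,\cdot)}_{L^1}$, which is the claim with $C_2 = C$.

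The $L^1$ bound comes from \Cref{prop:green-function-general-sing}, which gives $G_s\ge -C\abs{\log\abs{s}}$. Write $G_s = (G_s + C\abs{\log\abs{s}}) - C\abs{\log\abs{s}}$. The first term is nonnegative, so its integral equals its $L^1$ norm. Because $G_s(x,\cdot)$ has zero mean, that integral is $C\abs{\log\abs{s}}\vol(\omega_s)$. The triangle inequality then gives
\[
\int_{X_s}\abs{G_s(x,y)}\,\omega_s^n(y)\le 2C\,\vol(\omega_s)\,\abs{\log\abs{s}},
\]
and $\vol(\omega_s)$ depends only on the cohomology class, so it is independent of $s$.

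The main obstacle is justifying the representation formula pointwise, given that $g$ is only continuous. The Green function on the compact manifold $X_s$ has an integrable singularity, of order $d^{2-2n}$ or logarithmic, so the integral $\int G_s(x,y)g(y)\,\omega_s^n(y)$ converges. The weak solution it defines lies in $W^{2,p}$ for all $p$, hence is continuous, and it agrees with the unique mean-zero solution $\vphi_s$. If needed, I would approximate $g$ uniformly on $X_s$ by smooth mean-zero functions $g_k$. The corresponding solutions $\vphi_{s,k}$ then satisfy the estimate with a constant independent of $k$, and they converge to $\vphi_s$ uniformly by elliptic estimates at fixed $s$. One also needs $\abs{\log\abs{s}}$ to be bounded below by a positive constant on $\D^\circ_{1/2}$, so that the constant in the lower bound is uniform; this holds since $\abs{\log\abs{s}}\ge\log 2$ there.
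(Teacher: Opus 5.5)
Your proposal is correct and follows essentially the paper's route: represent $\vphi_s$ via the fiberwise Green function, then combine the lower bound $G_s\ge -C\abs{\log\abs{s}}$ from \Cref{prop:green-function-general-sing} with a zero-mean property. The only cosmetic difference is that the paper uses $\int_{X_s} g\,\omega_s^n=0$ to insert the constant $C\abs{\log\abs{s}}$, obtaining a one-sided bound, and then replaces $g$ by $-g$; you instead use $\int_{X_s}G_s(x,y)\,\omega_s^n(y)=0$ to get a uniform $L^1$ bound on $G_s(x,\cdot)$, which costs only a factor of $2$ in the constant.
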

\begin{proof}
    For $s \in \D^{\circ}_{\frac{1}{2}}$, we have 
    \[
    \vphi_s(z) = \int_{z' \in X_s} G_s(z, z') g(z')
    \omega_s^n(z').
    \]
    Using $G_{\omega_s}(z, z') + C \abs{\log \abs{s}} \geq 0$ in \Cref{prop:green-function-general-sing} and 
    $g + \norm{g}_{L^\infty} \geq 0$, we obtain that 
    \begin{align*}
        \int_{z' \in X_s} G_s(z, z') g(z')
    \omega_s^n(z') &= \int_{z' \in X_s} (G_s(z, z') + C \abs{\log \abs{s}}) g(z')
    \omega_s^n(z')\\
    &\geq \int_{z' \in X_s} (G_s(z, z') + C \abs{\log \abs{s}}) (-\norm{g}_{L^\infty})
    \omega_s^n(z')\\
    &= -C \norm{g}_{L^\infty} \vol(\omega_s)\abs{\log \abs{s}}.
    \end{align*}
    Here $\norm{g}_{L^\infty} \coloneqq 
    \norm{g}_{L^\infty(X_s)}$. For the reverse direction, we consider $-g$. This completes the 
    proof.
\end{proof}

\begin{remark}
    In the study of holomorphic dynamics, we need to solve fiberwise 
    Poisson equations for degenerating families of 
    \Kahler manifolds, e.g. in \cite{Filip-Tosatti21}*{Theorem 3.2.4}
    and in \cite{Cao2026}.
\end{remark}

\begin{remark}
    When we assume the degeneration $\pi \colon X \to \D$
    to be semistable and the continuous function $g$ also satisfies
    the integrability condition on central fibers, 
    \[
    \int_{D} g \omega^n = 0,
    \]
    where $D$ runs over all irreducible components of the singular 
    fiber, then we have better estimates on $\vphi_s$, the solution
    to the fiberwise Poisson equations in \Cref{prop:poisson-degenerating}. Indeed, following the same argument
    in \cite{Cao2026}*{Theorem 3.5}, we have 
    \[
    \frac{\norm{\vphi_s}_{L^\infty(X_s)}}{\abs{\log \abs{s}}^{1/2}} 
    \to 0, \quad s \to 0.
    \]
\end{remark}

\subsection{Small eigenvalues of degenerating \Kahler-Einstein manifolds}
\label[subsection]{sec:KE-small-eigen-examples-subsection}

In this subsection, we demonstrate that small eigenvalues also 
appear for degenerating families of compact \Kahler manifolds with 
canonical metrics.

\begin{example}
    \label[example]{example:calabi-yau}
    Let $\pi \colon X \to \D$ be a degenerating family of Calabi--Yau manifolds, 
    polarized by a relatively ample line bundle $L$. We equip each smooth fiber $X_s$ 
    ($s \neq 0$) with the unique Ricci-flat Calabi--Yau metric $\omega^{\rm CY}_s$ 
    representing the K\"ahler class $c_1(L)|_{X_s}$. Let $\lambda_k^{\rm CY}(s)$ 
    denote the $k$-th non-zero eigenvalue of the Laplacian on $(X_s, \omega^{\rm CY}_s)$.

    Since $(X_s, \omega^{\rm CY}_s)$ is a compact Riemannian manifold with non-negative Ricci curvature, classical results of Cheng 
    \cite{Cheng1975}*{Corollary~2.2} and Wu--Yang--Zhong \cite{Wu1991}*{Theorem~14.2} yield the bounds:
    \[
    \frac{1}{\diam(X_s, \omega^{\rm CY}_s)^2} \leq \lambda_k^{\rm CY}(s) \leq \frac{8k^2 n(n + 2)}{\diam(X_s, \omega^{\rm CY}_s)^2},
    \]
    where $n$ is the complex dimension of $X_s$ and $\diam(X_s, \omega^{\rm CY}_s)$ is the diameter of $(X_s, \omega^{\rm CY}_s)$.

    In light of recent developments in the geometry of degenerating Calabi--Yau manifolds, 
    the asymptotic behavior of $\lambda_k^{\rm CY}(s)$ depends entirely on the dimension 
    of the essential skeleton $\mathrm{Sk}(X)$ associated to the degeneration 
    (see \cites{Kontsevich-Soibelman, Mustata-Nicaise2015} for the definition). 
    Specifically, we have two cases:
    \begin{itemize}
        \item 
        If $\dim \mathrm{Sk}(X) = 0$, a result of Rong--Zhang \cite{Rong-Zhang2011}*{Theorem~1.4} 
        ensures that the diameter is uniformly bounded from above and below; that is, 
        there exists a constant $C > 0$ independent of $s$ such that 
        $C^{-1} \leq \diam(X_s, \omega^{\rm CY}_s) \leq C$. Consequently, there exist 
        uniform constants $C_1, C_2 > 0$ such that for all $k \geq 1$,
        \[
        C_1 \leq \lambda_k^{\rm CY}(s) \leq C_2 k^2.
        \]
        In this case, there are {no} small eigenvalues.
        
        \item 
        If $\dim \mathrm{Sk}(X) \geq 1$, a recent theorem of Li--Tosatti 
        \cite{Li-Tosatti2024}*{Theorem~1.1} establishes that the diameter grows 
        on the order of $\abs{\log\abs{s}}^{\frac{1}{2}}$. That is, for a uniform constant 
        $C > 0$, we have $C^{-1} \abs{\log\abs{s}}^{\frac{1}{2}} \leq \diam(X_s, \omega^{\rm CY}_s) \leq C \abs{\log\abs{s}}^{\frac{1}{2}}$. 
        It then follows that
        \[
        \frac{C_1}{\abs{\log\abs{s}}} \leq \lambda_k^{\rm CY}(s) \leq \frac{C_2 k^2}{\abs{\log\abs{s}}}
        \]
        for uniform constants $C_1, C_2 > 0$ and all $k \geq 1$. In this case, 
        each fixed non-zero eigenvalue tends to $0$.
    \end{itemize}
\end{example}

\begin{example}
    \label[example]{example:hyperbolic-curves}
    Let $\pi \colon X \to \D$ be a degenerating family
    of hyperbolic curves. We equip each smooth fiber $X_s$ 
    ($s \neq 0$) with its unique K\"ahler--Einstein
    (hyperbolic) metric 
    $\omega^{\rm hyp}_s \in c_1(K_{X_s})$. Let $\lambda_k^{\rm hyp}(s)$ 
    denote the $k$-th non-zero eigenvalue of the Laplacian on $(X_s, \omega^{\rm hyp}_s)$.

    Assume first that the degeneration is stable. By the results of 
    Schoen--Wolpert--Yau \cite{Schoen-Wolpert-Yau1980} and Masur \cite{Masur1976} (see 
    also \cite{Grotowski-Huntley-Jorgenson2001}), there are exactly
    $N - 1$ small eigenvalues satisfying the following asymptotic bounds:
    \[
    \frac{C_1}{\abs{\log \abs{s}}} 
    \leq \lambda_1^{\rm hyp}(s) 
    \leq \cdots \leq \lambda_{N - 1}^{\rm hyp}(s)
    \leq \frac{C_2}{\abs{\log \abs{s}}},
    \]
    where $N$ is the number of irreducible components
    of the central fiber $X_0$, and $C_1, C_2 > 0$ are constants independent of $s$.

    For a general degeneration of hyperbolic curves, similar asymptotics 
    can be deduced by passing to its stable model via the Deligne--Mumford 
    stable reduction theorem \cite{Deligne-Mumford1969}. We remark that the base change involved in stable 
    reduction induces isometries on the regular fibers, as the hyperbolic 
    metric is canonically determined by the complex structure.
\end{example}

\begin{remark}
    We expect similar eigenvalue asymptotics to hold for 
    degenerations of compact K\"ahler manifolds of general type 
    equipped with canonical K\"ahler--Einstein metrics. 
    This would generalize the behavior observed in 
    \Cref{example:hyperbolic-curves} to higher dimensions.
\end{remark}

\begin{remark}
    For a degenerating family of Fano manifolds equipped with canonical K\"ahler--Einstein metrics, there are no small eigenvalues. This follows immediately from the classical theorem of Lichnerowicz, which guarantees a uniform positive lower bound for the first non-zero eigenvalue of the Laplacian.
\end{remark}

\subsection{Non-Archimedean picture for 
{small eigenvalues}}
\label[subsection]{sec:NA-subsection}

We now explain how the appearance of {small eigenvalues} is
reflected in the non-Archimedean limit of a degeneration, in the sense
of hybrid convergence of Boucksom--Jonsson.

We first recall the relevant convergence results for complex
Monge--Amp\`ere measures. Let
$\pi\colon X \to \D$ be a projective degeneration of compact complex
manifolds, with central fiber $X_0$.
\begin{itemize}
    \item 
    Suppose that $X$ is a Calabi--Yau degeneration polarized by a
    relatively ample line bundle $L$. We equip each smooth fiber $X_s$
    ($s \neq 0$) with the unique Ricci-flat Calabi--Yau metric
    $\omega_{\mathrm{CY},s}$ representing
    $c_1(L)|_{X_s}$. Boucksom--Jonsson
    \cite{Boucksom-Jonsson2017} proved that the Monge--Amp\`ere
    measures $\omega_{\mathrm{CY},s}^n$ converge in $X^{\rm hyb}$ to a
    Lebesgue-type measure on the top-dimensional part of the essential
    skeleton $\mathrm{Sk}(X)$.

    \item
    Suppose that $X$ is a degeneration of compact complex manifolds of
    general type. We equip each smooth fiber $X_s$ ($s \neq 0$) with
    its unique K\"ahler--Einstein metric
    $\omega_{\mathrm{KE},s} \in c_1(K_{X_s})$. Pille-Schneider
    \cite{PilleSchneider2022}*{Theorem A} proved that the
    Monge--Amp\`ere measures $\omega_{\mathrm{KE},s}^n$ converge in
    $X^{\rm hyb}$ to a Dirac-type measure supported on the divisorial
    valuations corresponding to the irreducible components of the 
    central
    fiber of the canonical model $\mathcal{X}_c$ of $X$. More precisely,
    \[
        \mu =
        \sum_{D \in \mathrm{Irr}((\mathcal{X}_c)_0)}
        ((K_{\mathcal{X}_c})^n \cdot D)\delta_{v_D}.
    \]

    \item
    Let $\omega_{\mathrm{bg},s} \coloneqq \omega_X|_{X_s}$ be the
    restriction of a background K\"ahler metric. Pille-Schneider
    \cite{PilleSchneider2022}*{Theorem B} proved that the
    Monge--Amp\`ere measures $\omega_{\mathrm{bg},s}^n$ converge in
    $X^{\rm hyb}$ to a Dirac-type measure supported on the divisorial
    valuations corresponding to the irreducible components of 
    the central
    fiber of a normal model $\mathcal{X}$ of $X$ with reduced central
    fiber. More precisely,
    \[
        \mu =
        \sum_{D \in \mathrm{Irr}(\mathcal{X}_0)}
        \left(\int_D(\nu^*\omega_X)^n\right)\delta_{v_D},
    \]
    where $\nu\colon \mathcal{X}\to X$ is the induced map.
\end{itemize}

We next compare these non-Archimedean limits with the spectral behavior
of degenerating curves. We use two elementary spectral models for graphs. First, for a
finite connected graph with positive vertex masses, we choose positive
edge conductances $c_e$ (for instance $c_e=1$) and use the weighted graph
Laplacian on $\ell^2(V,\mu)$,
\[
    (\Delta_{\Gamma,\mu} f)(v)
    =
    \frac{1}{\mu(v)}
    \sum_{w\sim v} c_{vw}\bigl(f(v)-f(w)\bigr).
\]
This is the usual weighted graph Laplacian with vertex measure; see
\cite{Lange-Liu-Peyerimhoff-Post2015}*{Section~2, especially (2.3)--(2.7)}
with trivial signature. Its Dirichlet energy is
\[
    \sum_{\{v,w\}\in E} c_{vw}\abs{f(v)-f(w)}^2,
\]
so, since the graph is connected, the kernel consists exactly of the
constant functions. Hence, if the graph has $N$ vertices, the Laplacian
has one zero eigenvalue and $N-1$ positive eigenvalues.

Second, a compact metric graph is a finite graph whose edges are assigned
positive lengths, so that each edge is identified with a compact interval.
On such a graph, we use the standard Kirchhoff Laplacian: it is
$-d^2/dx^2$ on each edge, with continuity of functions and the Kirchhoff
condition at every vertex. Its spectrum is discrete and satisfies Weyl's
law, so it has infinitely many positive eigenvalues; see
\cite{Berkolaiko2017}*{Sections~2 and~4.1, especially Lemma~4.4}.

\begin{itemize}
    \item
    For degenerations of hyperbolic curves with K\"ahler--Einstein
    metrics $(X_s,\omega_{\mathrm{KE},s})$, the complex-analytic theory
    gives exactly $N-1$ small eigenvalues, each of order
    $\abs{\log\abs{s}}^{-1}$, where $N$ is the number of irreducible
    components of the central fiber of the canonical model
    $\mathcal{X}_c$; see \Cref{example:hyperbolic-curves}.

    On the non-Archimedean side, the measures
    $\omega_{\mathrm{KE},s}$ converge to a Dirac-type measure supported
    on the divisorial points corresponding to these irreducible
    components. Since we are working with a stable model, each component
    carries strictly positive mass. Thus the dual graph of
    $(\mathcal{X}_c)_0$, together with the vertex masses
    $v \mapsto \mu(v)$, becomes a weighted graph. Its weighted graph
    Laplacian has exactly $N-1$ positive eigenvalues. When $N=2$, Ji
    \cite{Ji1993} proved that the rescaled eigenvalue
    $\abs{\log\abs{s}}\,\lambda_1(s)$ converges to the corresponding
    graph-theoretic eigenvalue; see also
    \cite{Grotowski-Huntley-Jorgenson2001}*{Conjecture 9}. We expect
    the same spectral convergence for arbitrary $N$.

    \item
    For degenerations of complex curves with induced background metrics
    $(X_s,\omega_{\mathrm{bg},s})$, we take as normal model the space
    $\widehat{F^{-1}X}$ introduced in \Cref{diagram:diagram-Z}, and
    denote its central fiber by $Z$.

    The complex-analytic results give exactly $N-1$ small eigenvalues,
    each of order $\abs{\log\abs{s}}^{-1}$, where $N$ is the number of
    irreducible components of $Z$; see \Cref{thm:main-thm-combined}.
    Meanwhile, the non-Archimedean convergence theorem gives a
    Dirac-type limiting measure supported on the components of $Z$.
    Each of these components has strictly positive mass by
    \Cref{lem:counting-irrd-cpnt-of-Z}. Hence the dual graph of $Z$,
    weighted by $v \mapsto \mu(v)$, again has a weighted graph
    Laplacian with exactly $N-1$ positive eigenvalues.

    In the case $N=2$, Dai--Yoshikawa proved that 
    the
    rescaled limit 
    $\abs{\log\abs{s}}\,\lambda_1(s)$ exists (see 
    \cite{Dai-Yoshikawa2025}*{Problem 9.2}). We expect that, for general
    $N$, the rescaled small eigenvalues converge to the positive
    eigenvalues of the weighted graph Laplacian of the corresponding
    dual graph.

    \item
    For degenerations of elliptic curves polarized by a relatively
    ample line bundle $L$, we equip each smooth fiber $X_s$ ($s\neq 0$)
    with the unique Ricci-flat Calabi--Yau metric
    $\omega_{\mathrm{CY},s}$ representing $c_1(L)|_{X_s}$.

    The complex geometry gives two sharply different possibilities:
    there are either no small eigenvalues or infinitely many, depending
    on the dimension of the essential skeleton $\mathrm{Sk}(X)$; see
    \Cref{example:calabi-yau}. The non-Archimedean picture gives the
    same dichotomy. If $\dim \mathrm{Sk}(X)=0$, then
    $\mathrm{Sk}(X)$ is a point by connectedness, and the limiting
    measure is a Dirac measure. The corresponding graph has one vertex,
    so its weighted graph Laplacian has no positive eigenvalues.

    If $\dim \mathrm{Sk}(X)=1$, then after passing to a semistable
    model, the essential skeleton is a metric graph homeomorphic to
    $\S^1$. The limiting measure is Lebesgue-type on the edges, with
    positive density. The resulting metric graph Laplacian behaves like
    the Laplacian on a circle, and therefore has infinitely many
    positive eigenvalues.
\end{itemize}

\begin{remark}
    When \(X\) is a degeneration of compact hyperbolic curves equipped
    with the Arakelov--Bergman metrics, the corresponding
    non-Archimedean spectral convergence picture is established in the
    work of Amini--Nicolussi \cite{Amini-Nicolussi2022}. We also recall
    that the hybrid limit of the Arakelov--Bergman measures
    is the Zhang measure, as proved by Shivaprasad
    \cite{Shivaprasad2024} and by Amini--Nicolussi 
    \cite{Amini-Nicolussi2025AENS} independently.
\end{remark}

In summary, these examples suggest a spectral convergence principle for
{small eigenvalues} under hybrid convergence. On the complex side,
the scale \(\abs{\log\abs{s}}^{-1}\) appears for induced metrics in
\Cref{thm:main-thm-combined}, for Calabi--Yau metrics in
\Cref{example:calabi-yau}, and for hyperbolic metrics in
\Cref{example:hyperbolic-curves}. On the non-Archimedean side, the
convergence of the Monge--Amp\`ere measures \(\omega_s^n\) is known from
work of Boucksom--Jonsson in the Calabi--Yau setting, and from work of
Pille-Schneider for induced metrics and for general type degenerations.

In higher dimensions, several substantial pieces of this picture are
already available, including non-Archimedean pluripotential theory,
hybrid convergence of Monge--Amp\`ere measures, and metric convergence
results for special classes of Calabi--Yau degenerations. However, to
the best of the author's knowledge, a general canonical spectral object
on \(X^{\mathrm{an}}\), or on \(\mathrm{Sk}(X)\), together with a
convergence theorem for the rescaled Dirichlet forms and spectra, is not
yet available with the same level of generality as in the
one-dimensional theory of Amini--Nicolussi.

\appendix

\bibliographystyle{amsalpha}
\bibliography{refs}

\bigskip
\footnotesize

\textsc{Courant Institute of Mathematical Sciences, 
New York University, 251
Mercer St, New York, NY 10012
}
\par\nopagebreak
\textit{Email address}, \texttt{junyu.cao@nyu.edu
}\par\nopagebreak
\textit{Homepage}, \url{https://junyucao1024.github.io}.

\end{document}